\newtheorem{thm}{Theorem}[section]
\newtheorem{lem}[thm]{Lemma}
\theoremstyle{remark}
\theoremstyle{definition}
\renewcommand{\leq}{\leqslant}
\renewcommand{\geq}{\geqslant}
\renewcommand{\subset}{\subseteq}
\newcommand{\uh}{u_{\mathsf{h}}}
\newcommand{\N}{\mathbb{N}}
\newcommand{\1}{\mathbf{1}}
\newcommand{\R}{\mathbb{R}}
\newcommand{\Z}{\mathbb{Z}}
\newcommand{\X}{\mathcal{X}}
\renewcommand{\P}{\mathbb{P}}
\newcommand{\eps}{\varepsilon}
\newcommand{\oh}{\frac 1 2}
\newcommand{\aeps}{\eps^\frac{d}{d-2}}
\newcommand{\dd}{\, d}
\newcommand{\wto}{\rightharpoonup}
\newcommand{\Rd}{\R^d}
\newcommand{\Zd}{\Z^d}
\DeclareMathOperator{\dist}{dist}
\DeclareMathOperator{\capacity}{Cap}
\title[]{Homogenization for the Poisson equation in randomly perforated domains under minimal assumptions on the size of the holes}
\author{Arianna Giunti, Richard H\"ofer, Juan J. L. Vel\'azquez}
\address[Arianna Giunti]{Institute for Applied Mathematics, University of Bonn, Endenicher Allee 60, 53115 Bonn, Germany}
\address[Richard H\"ofer]{Institute for Applied Mathematics, University of Bonn, Endenicher Allee 60, 53115 Bonn, Germany}
\address[Juan J.L. Vel\'azquez]{Institute for Applied Mathematics, University of Bonn, Endenicher Allee 60, 53115 Bonn, Germany}
\begin{document}

\begin{abstract}
This paper deals with the homogenization of the Poisson equation in a bounded domain of $\Rd$, $d>2$, which is perforated by a random number of small spherical holes 
with random radii and positions. We show that for a class of stationary short-range correlated measures for the centres and radii of the holes, we recover in the homogenized 
limit an averaged analogue of the ``strange term'' obtained by Cioranescu and Murat in the periodic case [D. Cioranescu and F. Murat, \textit{Un term \'etrange venu d'ailleurs} (1986)]. We stress that we only require that the random radii have finite $(d-2)$-moment, which is the minimal assumption in order to ensure that the average of the capacity of the balls is finite. Under this assumption, there are holes which overlap with probability one. However, we show that homogenization occurs and that the clustering holes do not have any effect in the resulting homogenized equation.

\end{abstract}
\maketitle
%
%
%
%
%
%
%
%

\section{Introduction}

In this paper, we study the homogenization of the Poisson equation 
\begin{align}\label{laplace.eps}
\begin{cases}
-\Delta u_\eps = f \ \ \ \ &\text{ in $D^\eps$}\\
\, u_\eps = 0 \ \ \ \ &\text{ on $\partial D^\eps$},
\end{cases}
\end{align}
where the domain $D^\eps$ is obtained by removing from a bounded set $D \subset \Rd$, $d>2$, the union of properly rescaled spherical holes: Given a collection of points $\Phi= \{ z_i \}_{i\in \N} \subset \Rd$ and associated radii $\{\rho_i\}_{z_i \in \Phi} \subset \R_+$, we define
\begin{align}
\label{abstract.holes}
D^\eps:= D \backslash \bigcup_{z_i \in \Phi \cap \frac 1 \eps D} B_{\aeps \rho_i}( \eps z_i),
\end{align}
where $\frac 1 \eps D:= \{ 	x \in \Rd \, \colon \, \eps x \in D \}$.

\smallskip

In this paper, we assume that $D^\eps$ is a random set. More precisely, we assume that the collection $\Phi$ of the centres is generated according to a stationary point process on $\Rd$ and that the radii $\{\rho_i\}_{z_i \in \Phi}$ are unbounded random variables with short-range correlations. We show that, $\P$-almost surely, when $\eps \downarrow 0^+$, the solutions of \eqref{laplace.eps} weakly converge in $H^1_0(D)$ to the solution of
\begin{align}
\label{laplace.hom}
\begin{cases}
(-\Delta  + C_0) \uh = f \ \ \ \ &\text{ in $D$}\\
\, \uh = 0 \ \ \ \ &\text{ on $\partial D$}.
\end{cases}
\end{align}
Here, the constant $C_0> 0$ may be expressed in terms of an averaged density of capacity generated by the holes. We thus recover in the limit the analogue of 
the well-known ``strange term'' obtained by Cioranescu and Murat in the case of deterministic and periodic holes \cite{CioranescuMurat}. In this latter case,
which is equivalent to taking $\Phi = \Zd$ and $\rho_i \equiv r$ for $r >0$, $C_0$ equals the capacity of a ball of radius $r$. More precisely,
\begin{align}
	\label{capacity.deterministic}
	C_0= (d-2) \mathcal{H}^{d-1}( S^{d-1}) r^{d-2},
\end{align}
where $S^{d-1}$ is the $(d-1)$-dimensional unit sphere in $\Rd$.

\bigskip

The main contribution of this paper is to show that the homogenization of $u_{\varepsilon}$ to $\uh$ of \eqref{laplace.hom} takes place under minimal assumptions
on the decay
of the density functions for the radii $\{\rho_{i}\}_{z_i \in \Phi}$ of the holes. Namely, we only assume that
the average capacity of each hole is finite. More precisely, we will
show that the homogenization result holds assuming that the
configurations of the holes in $\mathbb{R}^{d}$, $d>2$, are assigned to a class of
probability measures for which the expectation of each radius $\rho_i$ satisfies
only%
\begin{equation}
\langle \rho^{d-2}_i\rangle <\infty.\label{AverRadii}
\end{equation}

 In view of \eqref{capacity.deterministic}, this is the minimal assumption under which one can expect $C_{0}$ in \eqref{laplace.hom} to be finite 
and therefore the limit problem \eqref{laplace.hom} to be meaningful.  On the other hand, to assume only
(\ref{AverRadii}) on the hole distributions poses some difficulties due to
the fact that the presence of balls in \eqref{abstract.holes} with large radii could allow the onset of clusters having large capacity. These
clusters might prevent the convergence of $u_{\varepsilon}$ to the solutions
of the homogenized problem $\uh$. In fact, it is known that the onset of
large clusters take place with probability one in systems of spherical holes
filling a small volume fraction of the space $\Rd$, if the radii
are distributed according to probability distributions with sufficiently fat tails (cf.
\cite{grimmett1999percolation, ContinuumPercolation}). However, in \cite{grimmett1999percolation, ContinuumPercolation} the holes are not rescaled 
as in \eqref{abstract.holes}. In this paper, we prove that with the rescaling of the balls as in
\eqref{abstract.holes}, the assumption (\ref{AverRadii}) is sufficient to ensure that  no percolating-like
structures appear in the limit $\varepsilon\downarrow 0^+$.

\bigskip

Stochastic homogenization problems for \eqref{laplace.eps} have been considered before in
the literature. The earliest results were the ones in \cite{MarchenkoKhruslov}
and \cite{papvar.tinyholes}. In \cite{papvar.tinyholes}, the evolution version of \eqref{laplace.eps}
(i.e. the linear heat equation) is considered, and it is shown that the corresponding
solutions $u_\eps$ converge to the analogous evolution version of $\uh$ assuming that
$\varepsilon^{-d}$ spherical holes are distributed independently according to some density function $V \in C_{0}^{\infty}(
\mathbb{R}^{d}).$ All the holes are assumed to have constant and fixed radius $r_{j}=\varepsilon^{\frac{d}{d-2}%
}.$ 

In \cite{MarchenkoKhruslov}, the authors consider spherical hole configurations constituted of
$\varepsilon^{-3}$  balls in a bounded domain $D\subset\mathbb{R}^{3}$,
selected according to some classes of probability measures in which the
balls cannot overlap due to the presence of a hard-sphere potential. These
probability measures allow also to have short range correlations between two holes. As in our paper, also in \cite{MarchenkoKhruslov}, the balls have random size $\aeps\rho_i$,
where the random variables $\rho_i$ are assumed to satisfy
\begin{equation}\label{RadiiMK}
\langle \rho_i^{3+\beta}\rangle <\infty\text{ with }\beta
>0.
\end{equation}
Under these assumptions, which we further discuss below, it is proved in \cite{MarchenkoKhruslov} that the
solutions of \eqref{laplace.eps} converge to the solutions of \eqref{laplace.hom}.

Stochastic homogenization for equations related to \eqref{laplace.eps} and domains $D^\eps$ perforated as in \eqref{abstract.holes} has been considered also
more recently in \cite{CaffarelliMellet, CasadoDiaz.nonlinear, CasadoDiaz}. In
\cite{CaffarelliMellet}, the authors consider the
homogenization limit for the obstacle problem associated to a Dirichlet
functional in $D \subset \mathbb{R}^{d},$ $d\geq2$, in which the solutions must satisfy $u_{\varepsilon} \geq 0$ in the collection of small compact sets $D \backslash D^\eps$.
Differently from our setting, the compact sets constituting $D \backslash D^\eps$ are centred on a periodic lattice, but they can have random shapes which are uniformly bounded by $M \aeps$, for a fixed constant $M > 0$. 
Assuming an ergodicity condition of the probability measure on the shapes of the sets, the authors of \cite{CaffarelliMellet} prove that the minimizers $u_\eps$ converge to the solution $\uh$ of the
semi-linear equation $-\Delta \uh +\alpha\left( \uh \right)_{-}=f$ in $D$.

In \cite{CasadoDiaz.nonlinear, CasadoDiaz}, the main focus is to study the stochastic homogenization of elliptic equations which not only include
\eqref{laplace.eps} but also singular elliptic operators like the $p-$Laplacian
operator. The probability measures considered in these works
allow to have hole configurations having random shapes which are uniformly bounded
by $\aeps$. Moreover, a stringent condition is assumed on the probability measure for the positions of the holes to ensure that the minimal distance between the holes is of order
$\varepsilon$ with probability one.

\bigskip

In all the papers listed above with the exceptions of \cite{MarchenkoKhruslov}, it is assumed
that the size of each hole is of order $\varepsilon^{\frac{d}{d-2}}$ with
probability one. We emphasize that the main technical difficulty in our paper
is due to the fact that under the sole assumption \eqref{AverRadii}, namely for distributions of the size of holes having fat tails
decreasing slowly enough, there exist, with probability one, domains $D^\eps$ with the form \eqref{abstract.holes} punctured by clusters of two or more overlapping holes. These
clusters do not occur (with probability tending to one as $\varepsilon\downarrow 0^+$) under the assumption \eqref{RadiiMK} which is made in \cite{MarchenkoKhruslov}.

\bigskip

In order to prove the homogenization results mentioned above there are
different methods in the literature. The first one, which was introduced by
Cioranescu and Murat in \cite{CioranescuMurat}, is related to the energy method of Tartar
\cite{TartarBook2009}. It is based on the construction of some \textit{oscillating test
functions} $w_{\varepsilon}$. A related approach has been used in the analysis
of several deterministic and stochastic homogenization problems (cf.
\cite{CaffarelliMellet,CasadoDiaz.nonlinear, CasadoDiaz, DalMasoGarroni.punctured}) and this
is also the approach that will be used in this paper. 

A second approach is based on the construction of suitable projection operators in
Hilbert spaces which are defined using the geometry of the perforated domains.
This approach was introduced by Marchenko and Khruslov (cf. \cite{MarchenkoKhruslov} and the references therein). Related to this approach is
the so-called method of reflections which yields
 a formal series representation for the Dirichlet problem \eqref{laplace.eps} that is frequently used in the physics literature. In
 \cite{HoeferVelazquez.reflections}, the authors gave a
precise meaning to some of these formal series and used this method to prove homogenization results.

A third approach, used for instance in \cite{papvar.tinyholes}, employs the probabilistic interpretation of the solutions $u_\eps$ of 
\eqref{laplace.eps}  (and its evolution analogue), in terms of the properties of the
Brownian motion. In particular, the solutions of \eqref{laplace.eps} as well as the term $C_0$ arising in the limit equation can be obtained in terms of
expectations of functions of the survival time of a Brownian walker among
obstacles. 

Finally, we also mention that for problems related to \eqref{laplace.eps}, a different approach has been introduced in \cite{Niethammer.homogenization, NiethammerVelazquez.coarsening1, NiethammerVelazquez.coarsening2}. In this series of papers, the main goal is to study a dynamical version of \eqref{laplace.eps}, where the holes evolve according to the function $u_\eps$ itself. In this case, the main challenge is thus to obtain estimates for the solution in the space $L^{\infty}$ instead of the Sobolev space $H^1$. The starting point used in \cite{NiethammerVelazquez.coarsening1, NiethammerVelazquez.coarsening2} is an ansatz for the structure of the solution of \eqref{laplace.eps} which gives rise to an explicit expression for an approximate solution of  \eqref{laplace.eps}. 
The difference between this approximate solution and the solution of \eqref{laplace.eps}  is
then estimated using the maximum principle. 

Stochastic homogenization results have been obtained using this
approach in \cite{NiethammerVelazquez.coarsening2}, and in the case of solutions of \eqref{laplace.eps} in unbounded domains, they rely on the study of screening
properties \cite{NiethammerVelazquez.screening}. Concerning the introduction and the study of such screening phenomena for interacting particles, we also refer to 
\cite{Niethammer.homogenization} and \cite{NiethammerOtto.screening}.

\bigskip

In the problems of stochastic homogenization, two different types of
convergence results are obtained. One approach consists in introducing a
probability measure $\mathbb{P}$ on the space of hole configurations
$\Omega$ (positions and shapes) in $\mathbb{R}^{d}.$ The Dirichlet
problem \eqref{laplace.eps} is then solved for each fixed realization $\omega\in\Omega$ in
a bounded domain which is obtained by means of \eqref{abstract.holes}. It is then proved
that $u_{\varepsilon}$ converges for $\mathbb{P-}a.s.$ as
$\varepsilon\downarrow 0^+$ to the solutions of \eqref{laplace.hom}. This is the type of
results obtained in \cite{CaffarelliMellet,  CasadoDiaz.nonlinear, CasadoDiaz}, and also in
this paper. 

The second approach consists in creating  configurations
containing $\varepsilon^{-d}$ holes in a bounded domain according to a
family of probability measures $\mathbb{P}_{\varepsilon}$ defined on a space
of configurations $\Omega_{\varepsilon}.$  The homogenization results is thus expressed in terms of convergence in probability, namely that for any
$\delta>0$, $\lim_{\varepsilon\downarrow 0^+}\mathbb{P}_{\varepsilon}\left( \left\{  \left\Vert u_{\varepsilon}-u\right\Vert >\delta\right\}  \right)=0$, where $\left\Vert \cdot\right\Vert $ is a suitable norm. The results obtained in \cite{MarchenkoKhruslov}, \cite{NiethammerVelazquez.coarsening2} and  \cite{papvar.tinyholes} are of this type.

\subsection{Main ideas and organisation of the paper}\label{main.ideas}
As already mentioned in the previous discussion, in this paper, we focus on probability measures where the radii of the balls in \eqref{abstract.holes} satisfy merely the minimal condition \eqref{AverRadii} on their moments,
and the centres of the balls are distributed according to a stationary point process on the whole space. We allow that both the centres and the radii have short-range correlations. This class of measures
includes the cases of balls having independent and identically distributed radii and centres either periodic or distributed according to an homogeneous Poisson point
process (cf. settings (a) and (b) in the next section). We also give some explicit examples of short-range correlated measures which are constructed starting from clustering 
or repulsive point processes for the centres of the holes (cf. setting (c) in the next section). 

In order to prove the main homogenization result for these measures, we adapt the argument of \cite{CioranescuMurat} to translate the conditions on the geometry of
the holes of $D^\eps$ into properties of the associated \textit{oscillating test function} $w_\eps$.
These functions account for the presence of the holes in the domain $D^\eps$ by correcting any admissible test function $\phi \in C^\infty_0(D)$ for \eqref{laplace.hom}
into  an admissible test function $w_\eps \phi \in H^1_0( D^\eps)$ for \eqref{laplace.eps}. The main breakthrough of \cite{CioranescuMurat} 
is the formulation of sufficient conditions on $w_\eps$ which allow to treat the error terms generated by the presence of $w_\eps$ in the weak formulation for 
\eqref{laplace.eps}. In the limit $\eps \downarrow 0^+$, these errors are the ones giving rise to the additional term $C_0 \uh$ in \eqref{laplace.hom}.
In the case of periodic balls in $\R^d$, the authors in \cite{CioranescuMurat} explicitly construct the test functions $w_\eps$ and obtain \eqref{laplace.hom} with the value for $C_0$ given by \eqref{capacity.deterministic}; from this construction, the link between the term $C_0$ and the density of capacity generated by the holes becomes apparent and motivates the necessary
choice in \eqref{abstract.holes} of the length-scales $\aeps$ for the radii and $\eps$ for the distance between the centres.

The main challenge of this paper is that with the sole assumption \eqref{AverRadii} on the radii, we need to deal in (almost) all the configurations with the presence
of large radii. In spite of the scaling of \eqref{abstract.holes}, the associated big balls may overlap and potentially break down the construction of the functions
$w_{\varepsilon}$. The main idea of our proof is to show that, even though with probability one there are regions where the balls overlap,
the moment assumption on the radii is sufficient to ensure that almost surely these regions have a capacity which vanishes in the limit $\eps \downarrow 0^+$.
This yields that the contribution of the functions $w_\eps$ to the new term in the limit equation is restricted to the region of the domain $D^\eps$ where the balls 
are small and well-separated.

\medskip

The structure of this paper is the following: In the next section, we give a precise definition of the processes generating the holes in 
\eqref{abstract.holes} and introduce some examples which are included in our setting; we then state the main homogenization result
(Theorem \ref{t.main}). 
Section \ref{proof.main} contains the proof of the theorems provided that the oscillating test functions exist, while Section \ref{section.oscillating} is devoted to 
the crucial arguments for the construction of such oscillating test functions. Section \ref{aux} provides some probabilistic results for \textit{marked point processes} on
which the previous section relies and which make the arguments of this paper totally self-contained. To this purpose, we also include an appendix with a proof of a Strong
Law of Large Numbers which is tailored to the processes that we consider. In the sake of what we think is a more comfortable reading,
we do not prove our main result directly for a general probability measure, but we first give the argument in the case of holes with periodic centres and independent and identically distributed radii. By relying on the abstract results of Section \ref{aux}, we then show how to adapt this proof to a general measure with short-range correlations. This, we believe, gives a more intuitive structure to the arguments of this paper.

\section{Setting and main result}\label{setting}

Let $D \subset \R^d$, $d >2$, be an open and bounded set that it is star-shaped with respect to the origin\footnote{This assumption ensures that the sets in the family $\{\frac{1}{ \eps} D \}_{\eps > 0}$ (see definition after formula \eqref{holes}) are nested. This is not a necessary condition for our results to hold, but it avoids some technicalities in our proof and it keeps our arguments and our notation leaner.}. For $\varepsilon > 0$, we denote by $D^\varepsilon \subset D$ the domain obtained by 
removing from $D$ the closure of a set of ``small'' holes $H^\varepsilon \subset \R^d$ of the form:
\begin{equation}\label{holes}
H^\varepsilon := \bigcup_{z_j \in \Phi \cap \frac 1 \eps D} B_{\varepsilon^{\frac{d}{d-2}} \rho_j} (\varepsilon z_j),
\end{equation}
where $\frac 1 \eps D := \{ x \in \Rd \colon \, \eps x \in D \}$, the set $\Phi \subset \R^d$ is a random collection of (countably many) points and the radii
$\{\rho_i\}_{z_i \in \Phi} \subset \R^+$ are random variables. The set $H^\eps$ may thus be thought as being generated  by a marked point process
$(\Phi, \mathcal{R})$ on $\Rd \times \R_+$, where  $\Phi$ is a point process on $\Rd$ for the centres of the balls, and the marks
$\mathcal{R}=\{ \rho_i\}_{z_i \in \Phi} \subset \R_+$ are the radii associated to each centre. We refer to \cite[Chapter 9, Definitions 9.1.I - 9.1.IV]{Daley.Jones.book2} 
for a rigorous definition of marked point processes as a class of random measures on $\Rd \times \R_+$. We remark indeed that there is a one-to-one correspondence between
representing each realisation of the process as a collection of points and radii $\{ (z_i, \rho_i) \}_{i\in \N} \subset \Rd \times \R_+$ as we do in this paper, and as the
atomic measure $\mu:=  \sum_{i\in \N} \delta_{(z_i, r_i )}$ on $\Rd \times \R_+$. We also note that both the previous representations are invariant under permutation of the indices
$i \in \N$ and thus that there is no preferred ordering of the centres of the balls generating $H^\eps$.

\medskip

We denote by $(\Omega, \mathcal{F}, \P)$ the probability space associated to the process $(\Phi, \mathcal{R})$ and, for every $\omega \in \Omega$, we write $H^\eps( \omega)$ for the set defined in \eqref{holes} with 
$(\Phi, \mathcal{R})(\omega)$.  Throughout this paper, we assume that $(\Phi, \mathcal{R})$ satisfies the the following properties:

 \begin{itemize}
 
 \item The process $\Phi$ is stationary: For every $x\in \Rd$ we have $\tau_x  \circ \Phi \stackrel{\mathcal{L}}{=} \Phi$, where for each
 $\{ z_i\}_{i\in \N} \subset \Rd$ the translations are defined as
 \begin{align}\label{translations}
 \tau_{x}(\{z_i \}_{i\in \N})= \{z_i + x \}_{i\in \N}.
 \end{align}
 
 \smallskip
 
 \item There exists $\lambda< +\infty$ such that for any unitary cube $Q \subset \Rd$
 \begin{align}\label{finite.variance}
 \langle \#(\Phi \cap Q)^2 \rangle^{\frac 1 2} \leq \lambda ,
 \end{align}
where $\#(S) \in \N \cup \infty$ denotes the cardinality of a set $S$ and $\langle \cdot \rangle$ is the integration over $\Omega$ with respect
 to the measure $\P$. Note that, by stationarity of $\Phi$, the left-hand side of \eqref{finite.variance} does not depend on the position of $Q$.
 
 \smallskip
 
 \item The point process $\Phi$ satisfies a {\it strong mixing condition}: For any bounded Borel set $A \subset \R^d$, let $\mathcal{F}(A)$ be the smallest $\sigma$-algebra with respect to which the random variables $N(B)(\omega):= \#(\Phi \cap B)$ are measurable for every Borel set $B \subset A$. Then, there exist $C_1 < +\infty$ and $\gamma > d$ such that for every $ A\subset \Rd$ as above, every $x \in \R^d$ with $|x| > \operatorname{diam}(A)$ and every $\xi_1, \xi_2$ measurable with respect to $\mathcal{F}(A)$ and $\mathcal{F}(\tau_xA)$, respectively, we have
   \begin{align}\label{mixing}
 | \langle \xi_1 \xi_2 \rangle - \langle \xi_1\rangle \langle \xi_2 \rangle  | \leq  \frac{C_1}{1+(|x|- \operatorname{diam}(A))^\gamma} \langle \xi_1^2 \rangle^\oh \langle \xi_2^2 \rangle^\oh.
 \end{align}

 \smallskip
 
\item The marginal $\P_{\mathcal{R}}$ of the marks with respect to the process $\Phi$ has $1-$ and $2-$ correlation functions 
  \begin{align}\label{correlation.radii.0}
f_1((z, \rho)) &= h(\rho),\\
f_2(z_i, \rho_i , z_j, \rho_j) &= h( \rho_i) h(\rho_j) + g(|z_i - z_j|, \rho_i, \rho_j)  \ \ \ \ \forall i\neq j
  \end{align}
with 
\begin{align}\label{correlation.radii.2}
\int \rho^{d-2} h(\rho) d \rho < +\infty \ \ \ \ \ | g(r , \rho_1, \rho_2) | \leq  \frac{c}{(1+r^{\gamma})(1 + \rho_1^{p})(1 + \rho_2^{p})}
\end{align}
for $p > d-1,  \gamma > d$ and $c \in \R_+$.

The previous assumptions imply that the $(d-2)$-moment of the radii of the balls is finite and that, conditioned to the positions of the centres, the radii for two balls with centres in $z_1, z_2$ have correlations which vanish when the distance $|z_1-z_2| \to +\infty$. These correlations are short-range in the sense that the function $g$ above is integrable in the variable $r:=|z_1- z_2|$.
 \end{itemize}

\bigskip

Throughout this paper, we denote $D^\varepsilon(\omega):= D\backslash H^\eps(\omega)$ with $H^\eps(\omega)$ as in \eqref{holes}, and we identify any $v \in H^1_0(D^\eps(\omega))$ with the function $\tilde v \in H^1_0(D)$ obtained by extending $v$ as $v \equiv 0$ in $H^\eps(\omega)$. Then we have: 
\begin{thm}\label{t.main}
Let the holes in \eqref{holes} be generated by a marked point process $(\Phi, \mathcal{R})$. Let $\Phi$ satisfy \eqref{finite.variance} and \eqref{mixing}, and
let the marginal $\P_{\mathcal{R}}$ satisfy \eqref{correlation.radii.0} and \eqref{correlation.radii.2}. For $f \in H^{-1}(D)$ 
 and $\varepsilon > 0$,
let $u_\varepsilon= u_\varepsilon( \omega, \cdot ) \in H^1_0( D^\varepsilon(\omega) )$ solve \eqref{laplace.eps}. Then, there exist a constant $C_0> 0$ and $\uh \in H^1_0(D)$ solving 
\eqref{laplace.hom} such that for $\P$-almost every $\omega \in \Omega$
\begin{align}\label{convergence}
u_\varepsilon(\omega, \cdot) \rightharpoonup \uh \ \ \ \text{ in $H^1_0(D)$, \ \ for $\varepsilon \downarrow 0^+$.}
\end{align}

\smallskip

Moreover, we have that the constant $C_0$ in \eqref{laplace.hom} is defined as
\begin{align}\label{CM.term}
C_0= (d-2) \sigma_d \langle \, N(Q) \, \rangle \langle \, \rho^{d-2} \, \rangle,
\end{align}
where $\sigma_d = \mathcal{H}^{d-1}(S^{d-1})$ and $N(Q)$ is the number of centres falling into any fixed unitary cube $Q$.
\end{thm}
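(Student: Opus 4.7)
The plan is to follow the energy method of \cite{CioranescuMurat}, using oscillating test functions whose construction (the hard part) will be carried out in Section \ref{section.oscillating}. The starting point is a standard energy estimate: testing \eqref{laplace.eps} against $u_\eps$ yields $\|u_\eps\|_{H^1_0(D)} \le C \|f\|_{H^{-1}(D)}$, so along a subsequence $u_\eps \rightharpoonup \bar u$ in $H^1_0(D)$ and $u_\eps \to \bar u$ strongly in $L^2(D)$ by Rellich--Kondrachov. It suffices to identify every such subsequential limit $\bar u$ with the unique solution $\uh$ of \eqref{laplace.hom}; convergence of the whole sequence then follows from uniqueness.

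The key input, to be established in Section \ref{section.oscillating}, is a family $\{w_\eps\}_{\eps>0}$ of oscillating test functions enjoying, $\P$-almost surely, the following three properties: (i) $w_\eps \in H^1(D)$ with $0 \le w_\eps \le 1$ and $w_\eps \equiv 0$ on $H^\eps \cap D$; (ii) $w_\eps \rightharpoonup 1$ weakly in $H^1(D)$ (hence strongly in $L^2(D)$); (iii) $-\Delta w_\eps = \mu_\eps - \gamma_\eps$ in $\mathcal{D}'(D)$, where $\mu_\eps \to C_0$ strongly in $H^{-1}(D)$ (with $C_0$ as in \eqref{CM.term}) and $\gamma_\eps \in H^{-1}(D)$ is ``concentrated on the holes'', in the sense that $\langle \gamma_\eps, v\rangle = 0$ for every $v \in H^1_0(D^\eps)$.

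Given such $w_\eps$, the argument would proceed by the classical Cioranescu--Murat manipulation. Fix $\phi \in C^\infty_c(D)$; since $w_\eps \phi$ vanishes on $H^\eps \cap D$ it is admissible in \eqref{laplace.eps}, and since $u_\eps \phi \in H^1_0(D^\eps)$ the distribution $\gamma_\eps$ annihilates it. Testing \eqref{laplace.eps} against $w_\eps \phi$ and the identity for $w_\eps$ against $u_\eps \phi$ and subtracting, the mixed terms $\int_D \phi\, \na u_\eps \cdot \na w_\eps$ cancel, leaving
\begin{align*}
\int_D w_\eps \na u_\eps \cdot \na \phi \dd x - \int_D u_\eps \na w_\eps \cdot \na \phi \dd x = \int_D f w_\eps \phi \dd x - \langle \mu_\eps, u_\eps \phi\rangle.
\end{align*}
Passing to the limit is then routine: the first term on the left converges to $\int_D \na \bar u \cdot \na \phi \dd x$ (strong $L^2$ convergence of $w_\eps$ against weak $L^2$ convergence of $\na u_\eps$); the second tends to $0$ (strong $L^2$ convergence of $u_\eps \na \phi$ against weak $L^2$ convergence of $\na w_\eps$ to $0$); on the right, the first term tends to $\int_D f \phi \dd x$ and the second to $C_0 \int_D \bar u \phi \dd x$ by pairing the strong $H^{-1}$ limit of $\mu_\eps$ with the weak $H^1_0$ limit of $u_\eps \phi$. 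The resulting identity is exactly the weak formulation of \eqref{laplace.hom}, so $\bar u = \uh$.

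The real obstacle, and the content of Section \ref{section.oscillating}, is the construction of $w_\eps$ satisfying (i)--(iii) under the minimal moment condition \eqref{AverRadii}. In the periodic setting one builds $w_\eps$ from an explicit radial harmonic corrector on a cube of side $\eps$ around each hole, and $C_0$ emerges as the capacity of the resulting unit cell. Here, however, $\P$-almost surely a positive density of balls with radii large on the $\eps$-scale appears in every bounded region, and overlapping clusters of such balls are unavoidable. The natural strategy is to partition the centres $\{z_i\}$ into a \emph{good} set, consisting of balls whose rescaled radius is small and whose $\eps$-neighbourhood contains no other centre, and a \emph{bad} complement, and then to build the corrector only around the good balls via the standard harmonic-cell construction while forcing $w_\eps \equiv 0$ on a safe neighbourhood of the bad region. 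The main estimates to establish will then be: first, that the $H^1$-capacity of the bad region is $o(1)$ as $\eps \downarrow 0^+$, so that the modified $w_\eps$ still converges weakly to $1$---this is where the moment assumption \eqref{AverRadii} and the mixing assumption \eqref{mixing} come in decisively; and second, a quantitative strong law of large numbers for the marked point process $(\Phi,\mathcal{R})$ (to be proved in the appendix) which, applied to the sum of capacities of the good balls, identifies $\mu_\eps \to (d-2)\sigma_d \langle N(Q)\rangle \langle \rho^{d-2}\rangle$ in $H^{-1}(D)$, giving \eqref{CM.term}.
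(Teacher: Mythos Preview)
Your outline is correct and follows the same Cioranescu--Murat strategy as the paper, with one noteworthy difference in how the oscillating test function properties are packaged. You formulate condition~(iii) as the original \cite{CioranescuMurat} hypothesis~(H5): a splitting $-\Delta w_\eps = \mu_\eps - \gamma_\eps$ with $\mu_\eps \to C_0$ \emph{strongly} in $H^{-1}(D)$ and $\gamma_\eps$ supported on the holes. The paper instead adopts the variant~(H5)$'$, here called~(H3): for every sequence $v_\eps \in H^1_0(D^\eps)$ with $v_\eps \rightharpoonup v$ in $H^1_0(D)$, one has $\langle -\Delta w_\eps, v_\eps\rangle \to C_0 \int_D v$. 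This is strictly weaker than your~(iii) because it only tests against functions vanishing on the holes, so the $\gamma_\eps$ part never has to be isolated. The advantage of the paper's formulation shows up in the truncation step: one approximates $w_\eps$ by a version $w_\eps^M$ built from radii capped at $M$, proves strong $W^{-1,\infty}$ convergence of the corresponding $\mu_\eps^M$, and then closes via the estimate $\|\nabla(w_\eps - w_\eps^M)\|_{L^2} \lesssim \langle \rho^{d-2}\1_{\rho \ge M}\rangle$ applied directly to $\langle -\Delta w_\eps, v_\eps\rangle$. With your~(iii) you would need to control $\|\mu_\eps - \mu_\eps^M\|_{H^{-1}}$ separately from $\gamma_\eps - \gamma_\eps^M$, which is doable but less clean. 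Both routes work; the paper's is slightly more economical.

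One small imprecision in your construction sketch: you write ``forcing $w_\eps \equiv 0$ on a safe neighbourhood of the bad region.'' If that neighbourhood had measure bounded away from zero, $w_\eps \rightharpoonup 1$ would fail. What the paper actually does is set $w_\eps = w_\eps^1 \wedge w_\eps^2$, where $w_\eps^1$ is $1$ minus the capacitary potential of $H_b^\eps$ in the safety layer $D_b^\eps$; this vanishes on $H_b^\eps$ but is not identically zero on $D_b^\eps$, and converges to $1$ in $H^1$ precisely because $\capacity(H_b^\eps, D_b^\eps) \to 0$.
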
 

\medskip

\subsection{Some examples of processes generating the holes $H^\eps$.}

Among the processes which satisfy the conditions required in the previous theorem, we mention the following three examples: For the first two examples, it is immediate that  \eqref{finite.variance}, \eqref{mixing}, \eqref{correlation.radii.0} and \eqref{correlation.radii.2} are satisfied. Also in the case of the examples given in (c), the previous conditions are satisfied and this follows by easy calculations that we postpone to the appendix.   

\begin{itemize}

\item[(a)] {\it The set $H^\eps$ is a collection of balls with periodic centres and i.i.d. radii.}\\ 
Here, the centres have deterministic positions $\Phi= \Zd$ and the marks $\mathcal{R}$ for the radii are a family of independent and identically distributed random variables which satisfy \eqref{AverRadii}. 
In this case, we have that the constant $C_0$ in \eqref{CM.term} reads $C_0= (d-2) \sigma_d \langle \, \rho^{d-2} \, \rangle$. 

\smallskip

\item[(b)] {\it The set $H^\eps$ is a collection of balls with centres generated by a Poisson point process and i.i.d. radii.} 
The process $\Phi$ is an homogeneous Poisson point process, and, conditioned to $\Phi$, the marks $\mathcal{R}$ are as in case (a). In this case, we have that $C_0= (d-2) \sigma_d \lambda \langle \, \rho^{d-2} \, \rangle$ with $\lambda> 0$ being the intensity of the Poisson point process $\Phi$.
\smallskip

\item[(c)] {\it The balls of $H^\eps$ have correlated radii and centres generated by a clustering or repulsive point process.} The process $\Phi$ is an attractive or a repulsive point processes with short-range correlations, respectively:

\smallskip

\begin{itemize}
\item[$(c.1)$]  Neymann-Scott cluster process on $\Rd$ (see, e.g. \cite[Example 6.3]{Daley.Jones.book}): Let $(\Phi_1, \{ r_i \}_{i \in \Phi_1})$ be a marked point process where $\Phi_1$ is a homogeneous Poisson point process and the marks are independent and uniformly distributed on $(0, R_c)$, with $0< R_c < +\infty$. For $\lambda_2 \in L^\infty(\Rd)$ and $x \in \Rd$, let $\Phi^{x}_2$ be the heterogeneous Poisson point process having intensity $\lambda_2( \cdot - x)$. Then, we define
\begin{align}\label{matern}
\Phi := \bigcup_{z_i \in \Phi_1} \Phi^{z_i}_2 \cap B_{r_i}(z_i).
\end{align}

\smallskip

\item[$(c.2)$]  Strauss process $\Phi$ on $\Rd$  with parameters $\alpha > 0$, $\beta \in [0,1]$ and interaction distance $r_c > 0$ \cite[Example 7.1(c)]{Daley.Jones.book},
\cite{Kelly.Ripley76}. 
For each bounded Borel set $B \subset \Rd$, we define 
\begin{align}\label{strauss}
\P (  \#(\Phi \cap B)= n ) = Z_B^{-1}  \frac{\alpha^n}{n!} \int_{B \times \cdots \times B} \beta^{R(\{x_1, \cdots , x_n \})} dx_1 \, \cdots \, dx_n ,
\end{align}
with 
\begin{align}\label{interaction.strauss}
R(\{x_1, \cdots , x_n \}):= \frac 1 2 \sum_{i,j \atop i \neq j}^n \1_{[0, r_c]}(|x_i - x_j|),
\end{align}
and 
$$
Z_B =  \sum_{n=0}^{+\infty}\frac{\alpha^n}{n!} \int_{B \times \cdots \times B} \beta^{R(\{x_1, \cdots , x_n \})} dx_1 \, \cdots \, dx_n.
$$
This probability measure is well-defined in the repulsive case $\beta \in [0,1)$, while it requires further assumptions in the attractive case $\beta \geq 1$. For $\beta =0$, we remark that $\Phi$ is the hard-core process with radius $r_c$ and intensity $\alpha$ (see also  \cite[Example 5.3(c)]{Daley.Jones.book} and \cite{Kelly.Ripley76}).
We remark that this process is the same as the macrocanonical Gibbs ensemble of Statistical Physics at temperature $T=1$ for the pair-interaction potential
\begin{equation*}
\psi(r) = 
\begin{cases}
-\log\beta \ \ \text{if  $r \leq r_c$}\\
0 \ \ \text{if $r > r_c$}
\end{cases}  
\end{equation*}
and the chemical potential $\mu= \log\alpha$.
\end{itemize}  

\smallskip

For each one of the previous point processes $\Phi$, we let the marginal $\P_{\mathcal{R}}$ be any probability measure satisfying \eqref{correlation.radii.0}
and \eqref{correlation.radii.2}.
\end{itemize}

\section{Proof of Theorem \ref{t.main}}\label{proof.main}
As already discussed in the introduction, our strategy is to adapt the method of \cite{CioranescuMurat} and to show that, in spite of the unboundedness of the radii of the holes in $H^\eps$, we may almost surely construct a sequence of suitable oscillating test function. The crucial result of our paper is indeed the following:

\begin{lem}\label{l.test.periodic}
 Let  $H^\varepsilon= H^\varepsilon(\omega)$ be as in Theorem \ref{t.main}. Then, for $\P$-almost every $\omega \in \Omega$, there exists a sequence $\{ w_\varepsilon( \omega, \cdot) \}_{\varepsilon > 0} \subset H^1(D)$  which satisfies
  \begin{itemize}
 \item[(H1)] For every $\varepsilon > 0$,  $w_\varepsilon(\omega, \cdot) = 0$ in $H^\eps(\omega)$;
 \item[(H2)] $w_\varepsilon( \omega, \cdot) \rightharpoonup 1$ in $H^1(D)$ for $\eps \downarrow 0^+$;
 \item[(H3)] For every sequence $v_\varepsilon \rightharpoonup v$ in $H^1_0(D)$ such that $v_\varepsilon \in H^1_0( D^\varepsilon)$ it holds that
  \begin{align}\label{(H5)}
 ( -\Delta w_\varepsilon(\omega, \cdot), v_\varepsilon )_{H^{-1}, H^{1}_0} \rightarrow C_0 \langle \rho^{d-2} \rangle \int_D v
 \end{align}
for $\eps \downarrow 0^+$ and where $C_0$ defined as in Theorem \ref{t.main}.
 \end{itemize}
\end{lem}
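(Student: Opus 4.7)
My plan is to adapt the Cioranescu--Murat construction, which in the periodic case builds $w_\eps$ as a product of capacitary potentials in safety annuli around each hole, by first discarding those centres whose radii are so large that such an annulus cannot be fit. Concretely, I would fix a small exponent $\kappa > 0$ and split the centres into
$$\mclN_{\mathcal{G}} := \{ i : z_i \in \tfrac 1 \eps D,\, \rho_i \leq \eps^{-\kappa}\}, \qquad \mclN_{\mathcal{B}} := \{ i : z_i \in \tfrac 1 \eps D,\, \rho_i > \eps^{-\kappa}\}.$$
For $i \in \mclN_{\mathcal{G}}$, the rescaled radius $\aeps \rho_i = \eps^{d/(d-2)} \rho_i$ is much smaller than $\eps$, so I would place a safety ball $B_{\eps/4}(\eps z_i)$ around each such hole and, after a further thinning that removes those good centres whose safety balls meet a neighbour, define $w_\eps^{(i)}$ to be the harmonic capacitary potential on the annulus: equal to $0$ on $B_{\aeps\rho_i}(\eps z_i)$, equal to $1$ outside $B_{\eps/4}(\eps z_i)$, and radially harmonic in between.

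For the bad set $\mclN_\mathcal{B}$ together with the thinned-out good centres, I would use a cruder cutoff $v_\eps^{\mathcal{B}}$, namely a capacitary potential of the union of the corresponding holes. The point of the moment condition \eqref{AverRadii} is precisely that the expected capacity of this set vanishes:
$$\Ll\langle \sum_{i \in \mclN_\mathcal{B}} \capacity(B_{\aeps\rho_i}(\eps z_i)) \Rr\rangle \ls \eps^{-d}\, \aeps^{d-2} \, \langle \rho^{d-2}\1_{\rho > \eps^{-\kappa}}\rangle = \langle \rho^{d-2}\1_{\rho > \eps^{-\kappa}}\rangle \to 0,$$
since $\aeps^{d-2} = \eps^d$ and $\langle \rho^{d-2}\rangle < \infty$. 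A Borel--Cantelli argument along a geometric subsequence then yields, $\P$-a.s., such a $v_\eps^{\mathcal{B}}$ with $\|\nabla v_\eps^{\mathcal{B}}\|_{L^2(D)}^2 \to 0$. Setting $w_\eps := v_\eps^{\mathcal{B}} \prod_{i \in \mclN_\mathcal{G}} w_\eps^{(i)}$, property (H1) is automatic, while (H2) follows from the explicit bounds $\|\nabla w_\eps^{(i)}\|_{L^2}^2 \sim \eps^d \rho_i^{d-2}$ combined with the moment hypothesis for an $H^1(D)$ bound, together with $\|1 - w_\eps^{(i)}\|_{L^1} \ls \eps^d$, which gives $L^1$-convergence of $w_\eps$ to $1$ and hence weak $H^1$ convergence.

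The main step is (H3). A distributional computation shows that $-\Delta w_\eps^{(i)}$ is concentrated on $\partial B_{\aeps\rho_i}(\eps z_i) \cup \partial B_{\eps/4}(\eps z_i)$ with opposite-sign masses proportional to $(d-2)\sigma_d (\aeps\rho_i)^{d-2}$. Pairing with $v_\eps$ and using that $v_\eps$ vanishes on the inner sphere together with Poincaré-type continuity at scale $\eps$, each good centre contributes, up to vanishing errors, $(d-2)\sigma_d (\aeps\rho_i)^{d-2} v(\eps z_i)$. Summing over $\mclN_\mathcal{G}$ and using $\aeps^{d-2} = \eps^d$, the total reduces to the random Riemann-type sum
$$\eps^d \sum_{i \in \mclN_\mathcal{G}} (d-2)\sigma_d\, \rho_i^{d-2}\, v(\eps z_i),$$
which under the stationarity \eqref{finite.variance} and mixing \eqref{mixing} hypotheses is amenable to a strong law of large numbers for marked point processes; its $\P$-a.s. limit is the right-hand side of \eqref{(H5)}, with the constant matching \eqref{CM.term}.

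The main obstacle I anticipate is controlling the random geometry: even after removing $\mclN_\mathcal{B}$, intermediate centres whose $\rho_i$ are of size a positive power of $\eps^{-1}$ can lie close to each other and break the disjointness of the safety annuli. Here the short-range correlation assumptions \eqref{mixing} and \eqref{correlation.radii.2} are essential, as they permit a second-moment estimate for the number of such collisions that is small enough to absorb the offending centres into $v_\eps^{\mathcal{B}}$ at negligible capacitary cost. Making these capacity estimates quantitative, together with the strong LLN needed for (H3), is what I would expect to form the bulk of Sections \ref{section.oscillating} and \ref{aux}.
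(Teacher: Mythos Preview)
Your outline is essentially the paper's strategy: a good/bad splitting with the bad set absorbing both overly large radii and centres that are too close, a capacitary cutoff on the bad set with vanishing $H^1$-cost, explicit radial harmonic correctors on disjoint annuli around the good holes, and a strong law of large numbers for the marked process to identify the limit in (H3). The paper packages the random-geometry content into a separate geometric lemma that produces a safety layer $D^\eps_b \supset H^\eps_b$ with $\dist(H^\eps_g, D^\eps_b)$ of order $\eps$, and then sets $w_\eps = w_1^\eps \wedge w_2^\eps$ rather than a product; since $w_1^\eps \equiv 1$ off $D^\eps_b$ and $w_2^\eps \equiv 1$ on $D^\eps_b$, the minimum and the product coincide and the gradients have disjoint support, which is exactly what makes the (H3) computation clean.
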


By relying on the previous lemma, the proof of Theorem \ref{t.main} follows exactly as in \cite{CioranescuMurat}:
\begin{proof}[Proof of Theorem \ref{t.main}]
Let $\omega \in \Omega$ be fixed, and let it belong to the full-probability set $\Omega' \subset \Omega$ made of configurations for which, according to  Lemma \ref{l.test.periodic}, the functions  $\{ w_\eps \}_{\eps > 0}:= \{w_\eps(\omega, \cdot)\}_{\eps>0}$ exist and satisfy hypothesis (H1), (H2) and (H3). 

\medskip

Since $u_\eps \in H^1_0(D^\eps)$, we may test equation \eqref{laplace.eps} with $u_\eps$ itself and get by the standard energy estimate
\begin{align*}
\| u_\eps \|_{H^1} \leq  C \|f\|_{H^{-1}},
\end{align*} 
with a constant $C$ that depends only on the domain $D$.
By weak-compactness of $H^1_0(D)$, we infer that, up to a subsequence which may depend on $\omega$,
\begin{align}\label{convergence.sols}
u_\eps \rightharpoonup \uh \ \ \ \ \text{in $H^1_0(D)$ for $\eps \downarrow 0^+$.}
\end{align}
We show that $\uh \in H^1_0(D)$ is the solution of \eqref{laplace.hom}; by uniqueness, this extends the weak convergence of the solutions $u_\eps$ to the continuum limit $\eps \downarrow 0$ and concludes the proof of the theorem.

\medskip 
 
To prove that $\uh$ solves \eqref{laplace.hom}, let us fix any function $\phi \in C^\infty_0(D)$. Since (H1) yields $w_\eps \phi \in H^1_0(D^\eps)$, we can test the equation \eqref{laplace.eps} with $w_\eps \phi$ and obtain
\begin{align}\label{weak.formulation.1}
\int \nabla( w_\eps \phi) \cdot \nabla u_\eps = (f, w_\eps \phi)_{H^{-1}, H^1_0}.
\end{align}
By (H2), the right-hand side above converges to
\begin{align}\label{weak.formulation.2}
 (f, w_\eps \phi)_{H^{-1}, H^1_0} \to (f, \phi)_{H^{-1}, H^1_0}.
\end{align}
We now use the product-rule and an integration by parts to rewrite the left-hand side in \eqref{weak.formulation.1} as 
\begin{align*}
\int \nabla( w_\eps \phi) \cdot \nabla u_\eps &= \int  \phi \nabla w_\eps \cdot \nabla u_\eps + \int w_\eps \nabla \phi  \cdot \nabla u_\eps  \\
& = ( -\Delta w_\eps, \phi u_\eps )_{H^{-1}, H^1_0} - \int u_\eps \nabla w_\eps \cdot \nabla \phi + \int w_\eps \nabla \phi \cdot \nabla u_\eps.
\end{align*}
Since by (H2) of Lemma \ref{l.test.periodic} and \eqref{convergence.sols}, both $u_\eps$ and $w_\eps$ converge strongly in $L^2_{\mathrm{loc}}(D)$, the last two terms on the right-hand side above converge to $\int  \nabla \phi \cdot \nabla \uh $. Furthermore, by \eqref{convergence.sols} and the assumption on $\phi$, we apply hypothesis (H3) of Lemma \ref{l.test.periodic} to the first term on the right-hand side above and conclude that
\begin{align*}
\int \nabla( w_\eps \phi) \cdot \nabla u_\eps \to C_0 \int \phi \uh + \int  \nabla \phi \cdot \nabla \uh.
\end{align*}
This, together with  \eqref{weak.formulation.1}, \eqref{weak.formulation.2} and the arbitrariness of $\phi \in C^\infty_0(D)$, yields that $\uh$ weakly solves \eqref{laplace.hom}. The proof of Theorem \ref{t.main} is complete. 
\end{proof}
\section{Existence of the oscillating test functions (Proof of Lemma \ref{l.test.periodic})}\label{section.oscillating}
As already mentioned in Subsection \ref{main.ideas}, we proceed to prove Lemma \ref{l.test.periodic} in two steps: We first give an argument in the simplest case of random holes
$H^\eps$ having periodic centres and i.i.d. radii (cf. example (a) of Section \ref{setting}). In that case, the crucial role played by assumption \eqref{AverRadii} on the random geometry of the set $H^\eps$ becomes clear.
We then generalize this argument to an arbitrary process $(\Phi, \mathcal{R})$ that satisfies the assumptions of Theorem \ref{t.main}. We observe that, as it becomes apparent in the proofs of this section, the full-probability set of realizations for which the statement of Lemma \ref{t.main} holds true is selected by countable repeated applications of Strong Laws of Large Numbers-type of results. The final set $\Omega' \subset \Omega$ in which we prove the existence of the oscillating test functions is thus a countable intersection of full-probability sets and remains of full probability.

\bigskip

Before giving the proof of Lemma \ref{l.test.periodic}, we fix the following notation: For any two open sets $A \subset B\subset \R^d$, we define
\begin{align}\label{capacity}
\capacity(A, B) := \inf \biggl\{ \int |\nabla v|^2 \, : \, v \in C^\infty_0(B), \, v \geq \1_{A} \biggr\}.
\end{align}

\smallskip

For a point process $\Phi$ on $\R^d$ and any bounded set $E \subset \R^d$, we define the random variables
\begin{align}
	\label{Number.function}
	\Phi(E)&:= \Phi \cap E, && \Phi^\eps(E):= \Phi \cap \left(\frac 1 \eps E \right), \\
  N(E) &:= \# (\Phi (E)), && N^{\eps}(E) := \# (\Phi^\eps(E)).
\end{align}
For $\delta > 0$, we denote by  $\Phi_\delta$ a thinning for the process $\Phi$ obtained as 
\begin{align}\label{thinned.process}
\Phi_\delta(\omega):= \{ x \in \Phi(\omega) \,  \colon \, \min_{y \in \Phi(\omega), \atop y \neq x} | x- y| \geq \delta \},
\end{align}
i.e. the points of $\Phi(\omega)$ whose minimal distance from the other points is at least $\delta$. Given the process $\Phi_\delta$, we set $\Phi_\delta(E)$, $ \Phi_\delta^\eps(E)$, $N_\delta(E)$ and $N_\delta^\eps(E)$ for the analogues for  $\Phi_\delta$ of the random variables defined in \eqref{Number.function}.

\smallskip

For a fixed $M>0$, we define the truncated marks
\begin{align}
	\label{truncated.radii}
\mathcal{R}^M:= \{ \rho_{j,M} \}_{z_j \in \Phi}, \ \ \ \ \rho_{j,M} := \rho_j \wedge M.
\end{align}

Furthermore, throughout the proofs, we write 
\begin{align}
	a \lesssim b 
\end{align}
whenever $a \leq C b$ for a constant $C=C(d)$ depending only on the dimension $d$. 
\medskip

Finally, we remark that, under the assumptions of the process $(\Phi, \mathcal{R})$ in Theorem \ref{t.main},
the process $(\Phi, \{  \rho^{d-2}\}_{z_i\in \Zd})$ satisfies the assumptions of Section \ref{aux}, and we therefore may apply
all the results stated in that section.

\subsection{Case (a): Periodic centres}\label{periodic}
In this setting, the holes $H^\varepsilon$ are generated by $\Phi= \Zd$ and a collection of i.i.d. random variables $\{ \rho_i \}_{i \in \Zd}$ satisfying \eqref{AverRadii}.
It is immediate to check that the marked process 

\medskip

Since the centres of the holes are periodically distributed, the only challenge in the construction of the functions $w_\eps$ of Lemma \ref{l.test.periodic} is due to the random variables $\{\rho_i\}_{i\in \Zd}$ which 
might generate very large holes under the mere condition \eqref{AverRadii}. In fact, in \cite{CioranescuMurat} the construction of $w_\eps$ relies on the assumption that each hole $B_{\aeps}(\eps z_i)$, $z_i \in \Zd$, is strictly contained in the concentric cube of size $\eps$; this allows to explicitly construct $w^\eps$ by locally solving a PDE on each of these cubes. In our case, the sole assumption \eqref{AverRadii} does not exclude that there are big holes which overlap and where the previous construction breaks down. The main auxiliary result on which Lemma \ref{l.test.periodic} for the periodic case (a) relies is the following Lemma \ref{l.geometry.periodic} on the asymptotic geometry of the set $H^\eps$. Roughly speaking, this lemma ensures that $H^\eps$ may be almost surely partitioned into two subsets, a ``good'' and a ``bad'' set of holes which we denote by $H^\eps_g$ and $H^\eps_b$, respectively. The set $H^\eps_g$ contains most of the holes of $H^\eps$ and is made of small balls where the construction of $w_\eps$ may be carried out similarly to \cite{CioranescuMurat}. The remaining holes, some of which overlap with full probability, are all included in $H^\eps_b$.  This set is well separated from $H^\eps_g$ and small with respect to the macroscopic size of the domain $D$: We may indeed enclose  $H^\eps_b$ into a set $D^\eps_b \subset D$ which is still separated from $H^\eps_g$ and such that the harmonic capacity of $H^\eps_b$ with respect to this ``safety layer'' $D^\eps_b$ vanishes in the limit $\eps \downarrow 0^+$. This allows us to implicitly define $w^\eps$ in $D^\eps_b$ as the capacitary function of $H^\eps_b$ in $D^\eps_b$; this choice ensures that the $H^1$-norm of $w_\eps$ on $D^\eps_b$ converges to zero. Hence, in the verification of (H2) and (H3) of Lemma \ref{l.test.periodic}, we only need to focus on the construction of $w_\eps$ on $D\backslash D^\eps_b$.

\begin{lem}\label{l.geometry.periodic} Let $\delta \in (0,\frac{2}{d-2})$ be fixed. Then, there exists $\eps_0= \eps_0(\delta) > 0$ such that for $\P$-almost every $\omega \in \Omega$ and for all $\eps \leq \eps_0$ there exist $H^\varepsilon_{g}(\omega), H^\varepsilon_{b}(\omega), D^\eps_b(\omega) \subset \R^d$ such that
\begin{align} \label{distance.good.bad}
H^\varepsilon(\omega) = H^\varepsilon_{g}(\omega) &\cup H^\varepsilon_{b}(\omega), \qquad  H^\varepsilon_{b}(\omega) \subset D^\eps_b(\omega), \\
&\dist\bigl( H^\varepsilon_g(\omega),  D^\eps_b(\omega) \bigr)\geq \frac{\varepsilon}{2},
\end{align}
where
\begin{align}\label{bad.set}
\lim_{\varepsilon \downarrow 0^+} \capacity\left( H^\varepsilon_b(\omega),D_b^\eps(\omega)\right) = 0.
\end{align}
Moreover, $H^\varepsilon_g(\omega)$ may be written as the following union of  disjoint balls centred in $n^\eps(\omega) \subset \Z^d \cap \frac{1}{\varepsilon}D$:
\begin{align}
H^\varepsilon_g(\omega): = &\bigcup_{ z_j \in n^\varepsilon} B_{\varepsilon^{\frac{d}{d-2}} \rho_j}( \varepsilon z_j ), \label{good.set.0}\\
  \varepsilon^{\frac{d}{d-2}} \rho_j \leq \varepsilon^{1+ \delta}< \frac{\eps}{2},\ & \ \ \lim_{\eps \downarrow 0^+}\eps^{d}\#(n^\varepsilon) = |D|.\label{good.set}
\end{align}
\end{lem}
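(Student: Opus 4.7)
The plan is to introduce a size threshold for the radii that isolates the few dangerous large balls, and then carefully enlarge around them so that the small ``converted'' balls lying near a large one are absorbed into $D^\eps_b$ without destroying the $\eps/2$-separation from $H^\eps_g$. Set $\alpha:=\tfrac{2}{d-2}-\delta>0$, so that $\rho_j\leq\eps^{-\alpha}$ is equivalent to $r_j:=\eps^{d/(d-2)}\rho_j\leq\eps^{1+\delta}$. I would declare $B_1:=\{j\in\Z^d\cap\eps^{-1}D:\rho_j>\eps^{-\alpha}\}$, then, for a sufficiently large absolute constant $c>0$, let $B_2$ consist of the remaining lattice centres $k$ whose positions $\eps k$ lie in the shadow $\bigcup_{j\in B_1}B_{2r_j+c\eps}(\eps j)$, and take $n^\eps$ to be the rest. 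The sets $H^\eps_g, H^\eps_b$ are the corresponding unions of the balls indexed by $n^\eps$ and by $B_1\cup B_2$, and
\[ D^\eps_b := \bigcup_{j\in B_1}B_{2r_j+\eps/4}(\eps j) \;\cup\; \bigcup_{k\in B_2}B_{r_k+\eps/4}(\eps k). \]
With $c$ chosen larger than $\tfrac34$, one checks that $H^\eps_b\subset D^\eps_b$ and that any good ball $B_{r_k}(\eps k)$, $k\in n^\eps$, stays at distance $\geq\eps/2$ from $D^\eps_b$: from the large pieces, by the margin built into the shadow; from the small $B_2$-collars, by the fact that distinct lattice centres are at distance $\geq\eps$.

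For the convergence $\eps^d\#(n^\eps)\to|D|$, I would write $\#(n^\eps)=\#(\Z^d\cap\eps^{-1}D)-\#(B_1\cup B_2)$. Riemann summation handles the first term, so it suffices to show $\eps^d\#(B_1\cup B_2)\to 0$ almost surely. For $B_1$ this is a monotone application of the classical SLLN for the iid indicators $\1_{\rho_j>M}$ followed by $M\to\infty$. For $B_2$, a lattice-point count in the shadow gives $\#(B_2)\lesssim\sum_{j\in B_1}((r_j/\eps)^d+1)$, so it suffices to show $\sum_{j\in B_1}r_j^d\to 0$; writing $r_j^d\leq (R^\eps_{\max})^2\,r_j^{d-2}$ with $R^\eps_{\max}:=\max_{j\in B_1}r_j$, and noting that $\langle\rho^{d-2}\rangle<\infty$ forces $t^{d-2}\P(\rho>t)\to 0$, one deduces that $R^\eps_{\max}\to 0$ almost surely along subsequences (a probabilistic input that the section \ref{aux} is tailored to produce). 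Combined with $\eps^d\sum_{j\in B_1}\rho_j^{d-2}\to 0$, this closes the estimate.

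For the capacity, subadditivity yields
\[ \capacity(H^\eps_b, D^\eps_b) \lesssim \sum_{j\in B_1}r_j^{d-2} + \sum_{k\in B_2}r_k^{d-2}, \]
because in each individual summand the outer ball can be taken of radius at least twice the inner one, giving the clean bound $\capacity(B_r,B_{2r})\sim r^{d-2}$. The $B_1$-sum equals $C\eps^d\sum_{j\in B_1}\rho_j^{d-2}$ and vanishes by the same truncation-plus-SLLN argument. The $B_2$-sum is the genuinely delicate term: bounding $\rho_k\leq\eps^{-\alpha}$ naively would leave a factor $\eps^{-\alpha(d-2)}\#(B_2)$ and require a $d$-th moment of $\rho$, absent from our hypotheses. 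The remedy is to reorganise the estimate cluster-wise: $B_2$-balls sitting well inside $B_{r_j}(\eps j)$ for some $j\in B_1$ are already swallowed by the capacity of the $j$-cluster and add nothing extra, while only the $B_2$-balls in a thin $O(\eps)$-collar around each cluster must be accounted for; the latter are again controlled by the decay of $R^\eps_{\max}$.

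The principal obstacle is the calibration of the geometry so that three mutually tight constraints hold simultaneously: $D^\eps_b$ must contain every ball of $H^\eps_b$, be at least a factor two larger than each large bad ball so the capacity estimate $r_j^{d-2}$ is clean, yet remain strictly inside the exclusion region defining $B_2$ so that the $\eps/2$-separation from $H^\eps_g$ survives. Once the constants are chosen consistently, the cluster-wise absorption of the converted $B_2$-balls is the main probabilistic content, and the $(d-2)$-moment hypothesis, applied through the SLLN and the decay of the maximal radius, closes the proof.
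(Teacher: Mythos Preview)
Your construction is essentially the paper's: declare large balls bad ($B_1=J^\eps_b$), absorb the small centres lying in their shadow ($B_2$), and estimate the capacity by subadditivity. The paper takes $D^\eps_b$ to be a union of the $\eps$-cubes $Q^\eps_i$ whose centres lie in $B_1\cup B_2$, while you use balls; this is cosmetic, and your calibration of constants for the $\eps/2$-separation is correct.

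The gap is in your treatment of $\sum_{k\in B_2} r_k^{d-2}$. Your cluster-wise reorganisation does not work as stated. First, the annulus between $B_{r_j}(\eps j)$ and the shadow $B_{2r_j+c\eps}(\eps j)$ has thickness $r_j+c\eps$, not $O(\eps)$, so the ``thin collar'' picture is wrong whenever $r_j\gg\eps$. Second, even if you try to bound the capacity of the whole $j$-cluster at once, your $D^\eps_b$ only reaches out to radius $2r_j+\eps/4$ around $\eps j$ (plus $\eps/4$-collars around the $B_2$-points), which is not a fixed multiple of the cluster radius $\approx 2r_j+c\eps$; the annulus is too thin to give the clean $r_j^{d-2}$ bound.

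The fix is simpler than what you propose and is exactly what the paper does: apply the very ``truncation-plus-SLLN'' argument you already use for $B_1$ to the whole index set $B_1\cup B_2$. This is Lemma~\ref{l.LLN.index.set}: for any $I_\eps\subset\Z^d\cap\eps^{-1}D$ with $\eps^d\#(I_\eps)\to 0$, one has $\eps^d\sum_{j\in I_\eps}\rho_j^{d-2}\to 0$, by splitting at level $M$ (the part with $\rho_j\leq M$ is $\leq M^{d-2}\eps^d\#(I_\eps)\to 0$; the part with $\rho_j>M$ is $\leq\eps^d\sum_{\text{all }j}\rho_j^{d-2}\1_{\rho_j>M}\to|D|\langle\rho^{d-2}\1_{\rho>M}\rangle\to 0$). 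You already establish $\eps^d\#(B_1\cup B_2)\to 0$, so this single observation handles the entire bad-set capacity in one stroke; no geometric regrouping is needed.

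A smaller point: your route to $R^\eps_{\max}\to 0$ via the tail bound $t^{d-2}\P(\rho>t)\to 0$ gives convergence in probability but not almost-sure convergence without a summable rate. The paper sidesteps this by using only that $R^\eps_{\max}$ is \emph{bounded} (immediate from $(R^\eps_{\max})^{d-2}\leq\eps^d\sum_j\rho_j^{d-2}$), and then invoking the truncation lemma above to get $\eps^d\sum_{j\in B_1}\rho_j^{d-2}\to 0$. If you do want $R^\eps_{\max}\to 0$ a.s., the same truncation device gives it.
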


\begin{proof}[Proof of Lemma \ref{l.geometry.periodic}]
The partition of the set $H^\eps(\omega)$ in the statement of the lemma clearly depends on the realization $\omega \in \Omega$; in the sake of a leaner notation, though,  in the rest of the proof we omit the argument $\omega$ and write $H^\eps$, $H^\eps_b$, $H^\eps_g$ instead of $H^\eps(\omega)$, $H^\eps_b(\omega)$, $H^\eps_g(\omega)$. For each $z_i \in \Z^d$, we denote by $Q^\eps_i$ the cube of length $\eps$ centered at $\eps z_i$. 

\medskip

We begin by constructing the set $H^\eps_b$ and its ``safety layer'' $D_b^\varepsilon$. We first  include in $H^\eps_b$ the particles which are large compared to the size of the cubes $Q_i^\eps$: For $\delta$ as in the statement of the lemma,we consider the subset of $\Zd$ given by
\begin{align}\label{J.set}
J_\eps^b := \Bigl\{z_i \in \Z^d \cap \frac{1}{\eps} D \colon  \eps^\frac{d}{d-2} \rho_j \geq \eps^{1+\delta}\Bigr\},
\end{align}
and the corresponding union of balls
$$
\tilde{H}^\eps_b := \bigcup_{ z_j \in J^\eps_b} B_{2\eps^\frac{d}{d-2} \rho_j}(\eps z_j).
$$
We now extend $J^\eps_b$ by including the centres of the balls for which, independently from the size of their radius, the corresponding cell $Q^\eps_i$ intersects $\tilde{H}^\eps_b$: We define
\begin{align}\label{definition.index}
\tilde{I}^\eps_b := \bigl\{ z_i \in \Z^d   \colon Q^\eps_i \cap  \tilde{H}^\eps_b \neq \emptyset \bigr\} \supset J^\eps_b, \quad I^\eps_b := \tilde{I}^\eps_b \cap \frac{1}{\eps} D.
\end{align}
We finally set
\begin{align}\label{definition.bad}
{H}^\eps_b := \bigcup_{z_j \in I^\eps_b} B_{\eps^\frac{d}{d-2} \rho_j}(\eps z_j), \ \ \  D_b^\eps	 := \bigcup_{z_j \in \tilde{I}^\eps_b} Q^\eps_j.
\end{align}
By \eqref{definition.bad} it is immediate that $H^\eps_b \subset D_b^\eps$. To show \eqref{bad.set} we first argue that provided  $\eps \leq \eps_0(\delta)$, with  $\eps_0(\delta)$ such that $2\eps_0^{1+\delta} \leq \eps_0$, for every  $z_j \in I^\eps_b$ it holds
\begin{align}\label{geo.per.1}
B_{2\eps^{\frac{d}{d-2}}}(\eps z_j)  \subset D_b^\eps.
\end{align}
Indeed, since by definition $\tilde H^\eps_b \subset D_b^\eps$, if $z_j \in J^\eps_b$, then \eqref{geo.per.1} follows immediately. If, otherwise,
$z_j \in I_b^\eps \setminus J^\eps_b$, then the assumption  $\eps \leq \eps_0$ implies $B_{2\eps^{\frac{d}{d-2}}}(\eps z_j) \subset Q_j^\eps \subset D^\eps_b$. 
By the subadditivity of the capacity (see definition \eqref{capacity}) we estimate
\begin{align}
\begin{aligned}
	\mathrm{Cap} \bigl(( H^\varepsilon_b(\omega),D_b^\eps(\omega)\bigr) 
	& \stackrel{\eqref{definition.bad}}	{\leq} \sum_{j \in I^\eps_b} \mathrm{Cap} \Bigl( B_{\eps^{\frac{d}{d-2}} \rho_j}(x_j),D_b^\eps(\omega)\Bigr) \\
	& \stackrel{\eqref{geo.per.1}}{\leq} \sum_{j \in I^\eps_b} \mathrm{Cap} \Bigl( B_{\aeps \rho_j}(x_j),B_{2 \eps^{\frac{d}{d-2}} \rho_j}(x_j)\Bigr) \\
	& ~~ \lesssim \sum_{j \in I^\eps_b} \eps^d \rho_j^{d-2}.
\end{aligned}
\end{align}
To conclude \eqref{bad.set}, it remains to show that the right-hand side above vanishes almost surely in the limit $\eps \downarrow 0^+$. This follows
from Lemma \ref{l.LLN.index.set} for the process $(\Zd, \{\rho^{d-2}_i \}_{z_i \in \Zd})$ provided
\begin{align}\label{geo.per.2}
\lim_{\eps \downarrow 0^+} \eps^d \# (I^\eps_b) = 0 .
\end{align}

\smallskip

To show \eqref{geo.per.2}, we first bound by \eqref{definition.index} 
$$
\eps^d \# (I^\eps_b) \leq \eps^d \# (J^\eps_b)  + \sum_{z_i \in I_b^\eps \backslash J^\eps_b} |Q_i^\eps|.
$$
We note that by \eqref{definition.index} and \eqref{J.set}, there exists a constant $c=c(d)$ such that, provided $\eps \leq \eps_0(d)$
(with $\eps_0(d)$ possibly smaller than the one above), for any cube $Q^\eps_i$ with $z_i \in I^\eps_b$, there exists $z_j \in J^\eps_b$ such that
\begin{align}
 Q^\eps_i \subset B_{2c\aeps\rho_j}(\eps z_j).
\end{align}
Since all the cubes $Q^\eps_i$ are (essentially) disjoint, we use the previous inclusion and the definition of $\tilde H^\eps_b$ to bound
\begin{align*}
{ \eps^d \# (I^\eps_b)} \lesssim \eps^d \# (J^\eps_b)  + |\tilde H^\eps_b | &\lesssim \eps^d \# (J^\eps_b)  + \sum_{z_j \in J^\eps_b} \Bigl(\aeps \rho_j\Bigr)^{d}\\
&\lesssim  \eps^d \# (J^\eps_b)  +\Bigl(\aeps \max_{z_j \in \frac{1}{\eps}D \cap \Z^d} \rho_j \Bigr)^2\eps^d\sum_{z_j \in J^\eps_b}  \rho_j^{d-2}.
\end{align*}
By Lemma \ref{SLLN.general.pp}, we have almost surely 
\begin{align}\label{estimate.max}
\limsup_{\eps \downarrow 0^+} \aeps \max_{z_j \in \frac{1}{\eps}D \cap \Z^d} \rho_j 
\leq \lim_{\eps \downarrow 0^+} \bigl( \eps^d \sum_{z_j \in \frac{1}{\eps}D \cap \Z^d}\rho_j^{d-2}\bigr)^{\frac{1}{d-2}}
= \langle \rho^{d-2} \rangle^{\frac{1}{d-2}},
\end{align}
and thus estimate for $\eps$ small enough (this time depending on $\omega$)
\begin{align}\label{geo.per.3}
\eps^d \# (I^\eps_b) \lesssim \eps^d \# (J^\eps_b)  + \langle \rho^{d-2} \rangle^{\frac{2}{d-2}}\sum_{z_j \in J^\eps_b} \Bigl(\aeps \rho_j\Bigr)^{d-2}.
\end{align}
The first term on right-hand side above tends to zero thanks to
\begin{align*}
\eps^d \#( J^\eps_b) = \eps^d  \sum_{z_j \in \frac{1}{\eps}D \cap \Z^d}\1_{\aeps \rho_j \geq\eps^{1+\delta}} 
\leq \eps^{2-\delta(d-2)} \eps^d \sum_{ \frac{1}{\eps}D \cap \Z^d}\rho_j^{d-2}
\end{align*}
and the choice $\delta< \frac{2}{d-2}$ together with the right-hand side of \eqref{estimate.max}. By this estimate and Lemma \ref{l.LLN.index.set}, also the second term on the right-hand side of \eqref{geo.per.3} vanishes almost surely in the limit $\eps \downarrow 0^+$. We thus established \eqref{geo.per.2} and therefore also \eqref{bad.set}.

\medskip

We now define $H^\varepsilon_{g}:= H^\varepsilon \backslash H^\varepsilon_{b}$, which allows to write $H^\varepsilon_{g}$ as in \eqref{good.set.0}
with $n_\eps = (\Zd \cap \frac 1 \eps D) \backslash  I^\eps_b$.  The first property in \eqref{good.set} is immediately implied by $J_b^\eps \subset I^\eps_b$ and \eqref{J.set}. The second property in \eqref{good.set} follows from \eqref{geo.per.2}.

\smallskip

 It remains to prove the last inequality in \eqref{distance.good.bad}: By the definition of $H^\eps_g$ itself,
 if $B_{\aeps \rho_j}(\eps z_j) \subset H^\eps_g$, then $\aeps\rho_j \leq \eps^{1+\delta}$. We choose $\eps \leq \eps_0(\delta)$ as in \eqref{geo.per.1}, such that $B_{\aeps \rho_j}(\eps z_j) \subset Q^\eps_j$
 and
\begin{align*}
\frac\eps 2 \leq \text{dist} \Bigl( B_{\aeps \rho_j}(\eps z_j) , \partial Q^\eps_j\Bigr) 
\stackrel{\eqref{definition.index}}{\leq} \text{dist}\Bigl(  B_{\aeps \rho_j}(\eps z_j) , D^\varepsilon_{b}\Bigr). 
\end{align*}
This concludes the proof of Lemma \ref{l.geometry.periodic}.
\end{proof}

\bigskip

\begin{proof}[Proof of Lemma \ref{l.test.periodic}, case (a).] Let us fix $\delta$ and $\eps_0(\delta)$ as  in  the statement of Lemma \ref{l.geometry.periodic}. Then, we know that we may fix $\P$-almost any event  $\omega \in \Omega$ such that we find $H_b^\varepsilon(\omega)$, $H^\varepsilon_g(\omega)$ and $D^\eps_b(\omega)$ as in Lemma \ref{l.geometry.periodic}. Also in this proof, to keep the notation leaner,  we omit the argument $\omega$ in the oscillating test functions and in the set of holes and write, for instance, $w_\varepsilon$, $H^\eps$ instead of $w_\varepsilon(\omega, \cdot)$ and $H^\eps(\omega)$.

\medskip

\emph{ Step 1.} We begin by a reduction argument: We claim that we may separately treat the two regions $D_b^\eps$ and $D \backslash D_b^\eps$, 
which contain $H_b^\eps$  and $H_g^\eps$ respectively,  and give an explicit construction for $w^\eps$ only in the set $D \backslash D_b^\eps$ . We indeed claim that, for $\P$-almost every $\omega \in \Omega$ we may set $w^\eps = w^\eps_1 \wedge w^\eps_2$ with
 $w_1, w_2 \in H^1(D)$ and such that
\begin{align}
& {} & w^\eps_1 &\equiv 1 \quad \text{in  $D \setminus D_b^\eps$,}  & w^{\eps}_1 &= 0 \quad \text{in $H^\eps_b$,} \label{splitting.bad} \\
0 &\leq w_2^\eps \leq 1, & w^\eps_2 &\equiv 1 \quad \text{in $D^\eps_b$,}  & w^\eps_2 &= 0 \quad \text{ in $H^\eps_g$,} \label{splitting.good}
\end{align}
with, in addition,
\begin{align}\label{reduction.bad}
 w^\eps_1 \to 1 \ \ \ \ \text{ in $H^1(D)$}.
\end{align}
If true, this decomposition for $w^\eps$ yields that (H1) is satisfied and, by \eqref{reduction.bad}, that (H2) needs to be argued only for  the sequence $\{w^\eps_2 \}_{\eps > 0}$. Finally,  since $\nabla w^{\eps}_1$ and $\nabla w^\eps_2$ have disjoint support, for any sequence $\{ v_\eps\}_{\eps>0} \subset H^1(D)$ as in (H3) we have that 
\begin{align}
( -\Delta w^\eps, v_\eps )_{H^{-1}, H^1_0} = \int \nabla w^\eps_1 \cdot \nabla v_\eps + \int \nabla w^\eps_2 \cdot \nabla v_\eps,
\end{align}
and, by \eqref{reduction.bad}, that the first term on the right-hand side vanishes in the limit $\eps \downarrow 0^+$. Since by an integration by parts, the second term on the right-hand side may be rewritten as $( -\Delta w^\eps_2, v_\eps )_{H^{-1}, H^1_0}$, we deduce that with the previous decomposition we may verify (H3) only for the measures $\{-\Delta w^\eps_2\}_{\eps >0}$.

\medskip

\emph{Step 2.} Construction of $w^\eps_1$ and $w^\varepsilon_2$. We begin with $w^\eps_1$: Thanks to  \eqref{distance.good.bad} of Lemma \ref{l.geometry.periodic} for $H^\eps_b, H^\eps_g$ and $D^\eps_b$, together with \eqref{capacity} and \eqref{bad.set}, for every $\eps \leq \eps_0$ there exists a function $\tilde w^\eps_1 \in H^1_0( D_b^\eps)$, such that $\tilde w^\eps_1 = 1$ in $H^\eps_b$, which satisfies
$$
\int_{D_b^\eps} |\nabla \tilde w^\eps _1|^2 \leq 2 \, \text{Cap}( H^\eps_b, D^\eps_b).
$$
If we now set  $w^\varepsilon_1= 1 - \tilde w^\eps_1$, and trivially extend $w_1^\eps$ by $1$ outside $D^\eps_b$, we immediately have that \eqref{splitting.bad} for $w^\eps_1$ is satisfied. In addition, thanks to \eqref{bad.set} and our choice of $\tilde w^\eps_1$, also \eqref{reduction.bad} follows. 

\medskip

We now turn to the construction of $w^\eps_2$: By the properties of $H^\eps_g$, $H^\eps_b$ and $D^\eps_b$ of Lemma \ref{l.geometry.periodic}, the set $D\backslash D^\eps_b$ contains only the holes of $H^\eps_g$, which are all disjoint balls, each strictly contained in the concentric cube $Q^\eps_i$ of size $\eps$.
We define $w^\eps_2 \equiv 1$ on $D_b^\eps$, and explicitly construct $w^\eps_2$ on $D \backslash D^\eps_b$ as done in \cite{CioranescuMurat}:
For each $z_i \in n^\eps$, with $n^\eps$ defined in the statement of Lemma \ref{l.geometry.periodic}, we write $T_i^\eps = B_{\eps^{\frac{d}{d-2}} \rho_i} (\eps z_i)$ and $B_i =  B_{\frac \eps 2}(\eps z_i)$ and define
\begin{align}\label{definition.oscillating.2}
 w^\varepsilon_2 = 1 - \sum_{z_i \in n^\eps} w^{\varepsilon, i}_2,
\end{align}
with each $w^{\varepsilon, i}_2$ solving
\begin{align}
	\label{cell.problem.0}
\begin{cases}
	-\Delta w^{\varepsilon, i}_2 = 0 \quad \text{in} ~ B_i \setminus T_i\\
	w^{\varepsilon, i}_2 = 1 \quad \text{in} ~ T_i\\
	w^{\varepsilon, i}_2 = 0 \quad \text{in} ~ D \setminus B_i.
\end{cases}
\end{align}

\medskip

Since by Lemma \ref{l.geometry.periodic} all the balls $B_i$ are disjoint and contained in $D \backslash D^\eps_b$, definitions \eqref{definition.oscillating.2} and \eqref{cell.problem.0} yield that $w^\eps_2$ satisfies \eqref{splitting.good} of Step 1. We thus constructed $w^\eps_1, w^\eps_2$ satisfying \eqref{splitting.good} and \eqref{reduction.bad} of Step 1. We conclude this step by remarking that definition \eqref{cell.problem.0} also implies that 
\begin{align}\label{oscillating.2}
  w^{\varepsilon,i}_2 = 1 - \mbox{argmin} \bigl\{ \mbox{Cap}(T^\eps_i,B^\eps_i) \bigr\},
\end{align}
and that each $w^{\eps,i}_2$ may be written explicitly as
\begin{align}
	\label{cell.problem}
\begin{cases}
	w^{\varepsilon, i}_2(x) &= \frac{|x-  \eps z_i|^{-(d-2)}- (\frac \eps 2)^{-(d-2)}}{\eps^{-d} \rho_i^{-(d-2)} -(\frac \eps 2)^{-(d-2)}} \quad \text{in} ~ B_i \setminus T_i \\
	w^{\varepsilon, i}_2 &= 1 \quad \text{in} ~ T_i\\
	w^{\varepsilon, i}_2 &= 0 \quad \text{in} ~ D \setminus B_i.
\end{cases}
\end{align}

\medskip

\emph{Step 3. } Equipped with $w^\eps_1$ and $w^\eps_2$ constructed above, we show that $w^\eps = w^\eps_1 \wedge w_2^\eps$ satisfies properties (H1)-(H3).
As already discussed in Step 1, it suffices to prove that $\{ w^\eps_2\}_{\eps> 0}$ satisfies (H2) and (H3).

We begin with (H2): By \eqref{definition.oscillating.2}, \eqref{cell.problem} and \eqref{good.set} of Lemma
\ref{l.geometry.periodic}, a direct calculation leads to
\begin{align}
	\label{capacity.good.periodic}
	\|\nabla w_2^{\eps} \|^2_{L^2(D)} & \lesssim \eps^{d}\sum_{z_i \in n^\eps} \rho_i^{d-2}\leq \eps^{d}\sum_{z_i \in \Zd\cap \frac 1 \eps D} \rho_i^{d-2}.
\end{align}
By Lemma \ref{SLLN.general.pp} applied to the right hand side, we infer that, almost surely,
\begin{align}\label{capacity.good.periodic2}
\limsup_{\eps \downarrow 0^+}\|\nabla w^\eps_2 \|^2_{L^2(D)} \leq C.
\end{align}
In addition, since $1 - w^\eps_2 = 0$ in $\Rd \backslash \bigl( \bigcup_{z_i \in n^\eps}B_i \bigr)$, and the balls $\{B_i \}_{z_i \in n^\eps}$ are essentially disjoint, by Poincar\'e's inequality we obtain also
\begin{align}
\| 1- w^\eps_2 \|_{L^2(D)}^2 \leq \sum_{z_i \in n^\eps} \| 1- w^\eps_2 \|_{L^2(B_i)}^2 \lesssim \eps^2 \sum_{z_i \in n^\eps} \|\nabla w^\eps_2 \|_{L^2(B_i)}^2.
\end{align}
This, together with \eqref{capacity.good.periodic} and \eqref{capacity.good.periodic2}, yields that almost surely $w^\eps_2 \wto 1$ in $H^1(D)$ when $\eps \downarrow 0^+$. We thus established  (H2).

\smallskip

To prove (H3) for $w^\eps_2$, we first use  \eqref{definition.oscillating.2} and \eqref{cell.problem} to decompose
\begin{align}\label{decomposition.laplacians}
-\Delta w^\eps_2 = \sum_{i=1}^{n^\eps} (\mu^{\eps,i}- \gamma^{\eps,i}), \ \ \ \mu^{\eps,i}= -\partial_\nu w^{\eps,i}_2 \, \delta_{\partial B_i}, \  \gamma^{\eps,i}=-\partial_\nu w^{\eps,i}_2 \, \delta_{\partial T_i},
\end{align}
with $\nu$ denoting the outer normal and $\delta_{\partial B^i}$ and $\delta_{\partial T^i}$ being the $(d-1)$-dimensional Hausdorff measure restricted to $\partial B_i$
and $\partial T_i$, respectively. We start by remarking (see (H5)' of \cite{CioranescuMurat}) that, since in (H3) the functions $v_\eps$ are always assumed to be vanishing
on each $T_i$, we only need to focus on the convergence (H3) for the sequence of measures 
\begin{align}\label{measure.mu}
\mu^{\eps}:=  -\sum_{i \in n^\eps} \partial_\nu w^{\eps,i}_2 \, \delta_{\partial B_i} : = \sum_{i \in n^\eps} \mu^{\eps,i}.
\end{align}
More precisely, we claim that for every $v_\epsilon \rightharpoonup v$ in $H^1_0(D)$ such that $v_\eps \in H^1_0(D^\eps)$, it holds
\begin{align}\label{convergence.mu}
( \mu^\eps , v_\eps)_{H^{-1}, H^1_0(D)} \rightarrow C_0 \int_D v,
\end{align}
where $C_0:=(d-2)\sigma_d \langle \, \rho^{d-2} \, \rangle$ corresponds to the definition \eqref{CM.term} for the case $\Phi=\Zd$ under consideration.

\smallskip

We begin by arguing that it suffices to prove \eqref{convergence.mu} above for any truncated process $(\Zd, \mathcal{R}^M)$, with $M \in \N$ and $ \mathcal{R}^M$ defined in \eqref{truncated.radii}. From now on, we use the lower index $M$ to distinguish
the objects constructed with the truncated marks $\mathcal{R}^M$ and the ones coming from $\mathcal{R}$. For instance, we denote by $w^\eps_{2,M}$, $\mu^\eps_M$ the analogues
of $w^\eps$ and $\mu^\eps$ introduced above. Note that, for $\mathcal{R}^M$, the constant in \eqref{convergence.mu} reads 
$C_{0,M}= (d-2)\sigma_d \langle \, \rho_M^{d-2} \, \rangle$.

For any $M\in \N$, since  $|C_0 - C_{0,M}| \lesssim \langle\, \rho^{d-2} \1_{\rho \geq M} \, \rangle$,
 we bound by the triangular inequality, an integration by parts and Cauchy-Schwarz inequality  
\begin{align}
\label{convergence.mu1}
\bigg|( - \Delta w^\eps , v_\eps)_{H^{-1}, H^1_0(D)} - C_0 \int_D v\bigg|
&\lesssim \| \nabla (w^\eps_M - w^\eps)\|_{L^2(D)} \| \nabla v_\eps\|_{L^2(D)} \\
& + \bigg|( \mu^\epsilon_M , v_\eps)_{H^{-1}, H^1_0(D)} - C_{0,M} \int_D v \bigg| +\langle\, \rho^{d-2} \1_{\rho \geq M}  \, \rangle \|v\|_{L^1},
\end{align}
By an argument similar to the one in
\eqref{capacity.good.periodic}, we estimate
\begin{align}\label{cutoff.estimate}
\limsup_{\eps \downarrow 0^+} \| \nabla (w^\eps_M - w^\eps)\|_{L^2(D)} \lesssim \langle\,  \rho^{d-2} \1_{\rho \geq M}  \, \rangle,
\end{align}
so that by letting $\eps \downarrow 0^+$ in \eqref{convergence.mu1}, this  and the boundedness of the sequence 
$v_\eps$ in $H^1$ yield
\begin{align*}
\limsup_{\eps \downarrow 0^+}&\bigg|( - \Delta w^\eps , v_\eps)_{H^{-1}, H^1_0(D)} - C_0 \int_D v\bigg| \\
&\lesssim \limsup_{\eps \downarrow 0^+}\bigg|( \mu^\epsilon_M , v_\eps)_{H^{-1}, H^1_0(D)} - C_{0,M} \int_D v \bigg| + \langle\,  \rho^{d-2} \1_{\rho \geq M}  \, \rangle (\|v\|_{L^1} + 1).
\end{align*}
Hence, provided that \eqref{convergence.mu} holds for $\mu^\eps_M$ and any fixed $M\in \N$, we may then send $M \uparrow +\infty$ and establish (H3) by assumption \eqref{AverRadii}.

\bigskip

To argue that almost surely and for every $M \in \N$ the convergence in \eqref{convergence.mu} holds for $\mu^\eps_M$, we follow \cite{CioranescuMurat}.
First, by the definition of $w_{i,M}^\eps$, we compute
\begin{align}
	\mu^\eps_M = \sum_{z_i \in n_\eps \cap \frac 1 \eps D} \frac{2^{d-1}(d-2) (\rho_{i,M})^{d-2}}{1- 2^{d-2}\eps^2 (\rho_{i,M})^{d-2}} \eps \delta_{ \partial B_i}.
\end{align}
Since $\rho_{i,M}\leq M$, to obtain \eqref{convergence.mu} it suffices to prove
\begin{align}\label{strong.convergence.cutoff2}
	\tilde{\mu}^\eps_M := \sum_{z_i \in n_\eps} 2^{d-1} (d-2) \rho_{i,M}^{d-2} \eps \delta_{ B_i} \to C_{0,M}
	\quad \text{strongly in} ~ W^{-1,\infty}(D).
\end{align}
 To show this, we fix $M\in \N$ and split the convergence \eqref{convergence.mu} into the two following steps: If we define 
$$
\eta^\eps_M := \sum_{z_i \in \Z^d \cap \frac 1 \eps D} 2^d (d-2) d \rho_{M,i}^{d-2} \1_{B_i},
$$
then  we argue that
\begin{align}\label{convergence.mu.1}
\tilde \mu^\eps_M- \eta^\eps_M \rightarrow 0 \quad \text{strongly in} ~ W^{-1,\infty}(D),
\end{align}
and
\begin{align}\label{convergence.mu.2}
	 \eta^\eps_M \to C_{0,M} 	\quad \text{strongly in} ~ W^{-1,\infty}(D).
\end{align}

To show \eqref{convergence.mu.1}, we consider the auxiliary problems
\begin{align}
	\label{PDE.q}
\begin{cases}
	-\Delta q^\eps_{i,M} &= 2^d (d-2)d \rho_{i,M}^{d-2} \quad \text{in} ~ B_i^\eps \\
	\frac{\partial q^\eps_{i,M}}{\partial \nu} &= 2^{d-1} (d-2) \rho_{i,M}^{d-2} \eps \quad  \text{on} ~ \partial B_i^\eps,
	\end{cases}
\end{align}
which are in particular satisfied by the functions 
\begin{align}
	q^\eps_{i,M}(x) = 2^{d-1} (d-2) \rho_{i,M}^{d-2} \Bigl(|x-z_i|^2 - \Bigl(\frac \eps 2 \Bigr)^2\Bigr).
\end{align}
As $q^\eps_{i,M} = 0$ on  $ \partial B_i^\eps$, we may extend $q^\eps_{i,M}$ by zero outside of $B_i^\eps$ and estimate
\begin{align}
	\|\nabla q^\eps_{i,M}\|_{L^\infty(B_i)} = 2^{d-1} (d-2)  \rho_{i,M}^{d-2} \eps \lesssim  M^{d-2} \eps.
\end{align}
Using Poincar\'e's inequality, and since the balls $B_i^\eps$ are disjoint, we infer that
\begin{align}
	\label{q_M.to.zero}
	q^\eps_M := \sum_{z_i \in \Z^d \cap \frac 1 \eps D} q^\eps_{i,M} \to 0 \quad \text{in} ~ W^{1,\infty}(\R^d).
\end{align}
We observe that by \eqref{PDE.q}
\begin{align}
	 \eta^\eps_M - {\tilde \mu}^\eps_M  =
	 - \Delta q^\eps_M + \sum_{z_i \in (\Z^d \cap \frac 1 \eps D) \backslash n_\eps} 2^d (d-2) d \rho_{M,i}^{d-2} \1_{B_i} =: - \Delta q^\eps_M + R_\eps^M.
\end{align}
We have by \eqref{q_M.to.zero} that $- \Delta q^\eps_M \to 0$ in $W^{-1,\infty}(\R^d)$. On the other hand, for the term $R_\eps^M$ above, we have that by Lemma \ref{l.LLN.index.set} and \eqref{good.set}
\begin{align}
	\lim_{\eps \downarrow 0^+} \|R^\eps\|_{L^1} 
	\lesssim \lim_{\eps \downarrow 0^+} \eps^d \sum_{z_i \in (\Z^d \cap \frac 1 \eps D) \backslash n_\eps}  \rho_{M,i}^{d-2} = 0,
\end{align}
almost surely. 
Since $R^\eps$ is bounded in $L^\infty$, we also have that $R^\eps \stackrel{*}{\rightharpoonup} 0$ in $L^\infty(D)$ and $R^\eps \to 0$ in $W^{-1,\infty}(D)$. This yields \eqref{convergence.mu.1}.

\smallskip

In order to show \eqref{convergence.mu.2}, we first remark that it suffices to argue that
\begin{align}
\eta^\eps_M \stackrel{*}{\rightharpoonup}  C_{0,M} 	\quad \text{in} ~ L^\infty(D).
\end{align}
Since  the family of functions $\{\eta^\eps_M\}_{\eps> 0}$ is uniformly bounded in $L^\infty(D)$, we identify the $w^*$-limit by testing $\eta^\eps_M$ with any function  $\zeta\in C^1_0({D})$: Indeed, by Lemma \ref{l.w.independent} applied to $(\Zd, \{ \rho^{d-2}_{i,M}\}_{i\in \Zd})$ in the domain $B=D$ we infer almost surely that
$$
( \eta^\eps_M , \zeta)_{H^{-1}, H^1_0(D)} \to C_{0,M} \int_D \zeta.
$$
This establishes \eqref{convergence.mu.2}  and thus concludes the proof for (H3) and for the whole lemma.
\end{proof}

\subsection{Proof of Lemma \ref{l.test.periodic} in the general case}\label{ppp}
Let $(\Phi, \mathcal{R})$ be a marked point process satisfying the assumptions
of Theorem \ref{t.main}.
In contrast with the previous subsection, when the centres of the holes are distributed according to a general point process $\Phi$, there is not a deterministic
positive lower bound for the minimal distance between the points of $\Phi$. This requires some technical changes in the arguments
of Subsection \ref{periodic} but still allows us to obtain a statement on the asymptotic geometry of $H^\eps$ similar to  Lemma \ref{l.geometry.periodic} and to prove Lemma \ref{l.test.periodic}.

\begin{lem}\label{l.geometry.ppp} There exist an $\eps_0=\eps_0(d)$ and a family of random variables $\{r_\eps\}_{\eps>0} \subset \R_+$ 
such that for $\P$-almost every $\omega \in \Omega$
\begin{align}\label{r.eps.correction}
\lim_{\eps \downarrow 0^+}& r_\eps(\omega) = 0,
\end{align}
and for any $\eps \leq \eps_0$ there exist $H^\varepsilon_{g}(\omega), H^\varepsilon_{b}(\omega), D^\eps_b(\omega) \subset \R^d$  such that 
\begin{align} \label{ppp.distance.good.bad}
H^\varepsilon(\omega) = H^\varepsilon_{g}(\omega) \cup H^\varepsilon_{b}(\omega), &\ \ \  H^\varepsilon_{b}(\omega) \subset D^\eps_b(\omega),\notag\\
\dist\bigl(H^\eps_g(\omega), D^\eps_b(\omega)\bigr) &\geq \frac{\eps r_\eps(\omega) }{2},
\end{align}
where
\begin{align}
\lim_{\varepsilon \downarrow 0^+} &\capacity ( H^\varepsilon_b(\omega),D_b^\eps(\omega)) = 0. \label{ppp.bad.set}
\end{align}
Moreover, $H^\varepsilon_g(\omega)$ may be written as the following union of disjoint balls centred in 
$n^\varepsilon(\omega) \subset \Phi(\frac{1}{\varepsilon}D)$:
\begin{align}
H^\varepsilon_g(\omega)&: = \bigcup_{ z_j \in n^\varepsilon} B_{\varepsilon^{\frac{d}{d-2}} \rho_j}( \varepsilon z_j ), \notag \\
\label{good.set.ppp}
 \min_{z_i\neq z_j \in n^\eps}\eps |z_i - z_j|\geq 2 r_\eps \eps, \ \ \ \ \ \varepsilon^{\frac{d}{d-2}} \rho_j &\leq \frac{\eps r_\eps(\omega) }{2}, \ \ \ \ \ \lim_{\eps \downarrow 0^+}\eps^{d}\#(n^\varepsilon) = \langle N(Q) \rangle |D|.
\end{align}

Furthermore, if for $\delta > 0$ the process $\Phi_\delta$ is defined as in \eqref{thinned.process},  then
\begin{align}\label{small.distance.bad}
\lim_{\eps\downarrow 0^+}\eps^d \# \bigl( \bigl\{  z_i \in \Phi^\eps_{2 \delta}(D)(\omega) \, \colon \,\dist( z_i , D^\eps_b ) \leq \delta \eps \bigr\} \bigr)= 0.
\end{align}
\end{lem}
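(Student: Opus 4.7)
The plan is to mimic the structure of the proof of Lemma \ref{l.geometry.periodic}, with two essential modifications to cope with the absence of a deterministic minimum spacing for $\Phi$. First, I would introduce a deterministic slowly decaying scale $r_\eps\downarrow 0$ to play the role of the unit lattice spacing, together with a slowly growing truncation $M_\eps\uparrow\infty$ for the radii, coupled by the sizing constraint $\aeps M_\eps\le \tfrac{\eps r_\eps}{2}$. A concrete choice is $M_\eps=|\log\eps|$ and $r_\eps=\eps^{\theta}$ for some $\theta\in(0,2/(d-2))$, so that any ball of truncated radius sits well inside the safety disk of radius $\tfrac{\eps r_\eps}{2}$ around its centre.

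The bad index set $I^\eps_b\subset \Phi^\eps(D)$ is then defined as the union of three pieces: the \emph{large} centres $J^\eps_L:=\{z_i\in\Phi^\eps(D):\rho_i>M_\eps\}$; the \emph{close} centres $J^\eps_C:=\Phi^\eps(D)\setminus \Phi^\eps_{2r_\eps}(D)$, namely those with another centre within microscopic distance $2r_\eps$; and finally the \emph{inflated} centres whose safety cell $B_{\eps r_\eps}(\eps z_i)$ meets $\tilde H^\eps_b:=\bigcup_{z_j\in J^\eps_L\cup J^\eps_C}B_{(2\aeps\rho_j)\vee(\eps r_\eps)}(\eps z_j)$. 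I set $H^\eps_b$ to be the subunion of balls in $H^\eps$ centred on $I^\eps_b$ and $D^\eps_b:=\bigcup_{z_j\in I^\eps_b}B_{(2\aeps\rho_j)\vee(2\eps r_\eps)}(\eps z_j)$. By construction $H^\eps_b\subset D^\eps_b$ and $\dist(H^\eps_g,D^\eps_b)\ge \tfrac{\eps r_\eps}{2}$, and the disjointness and radius bounds in \eqref{good.set.ppp} are immediate.

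The heart of the argument is showing $\eps^d\#I^\eps_b\to 0$ almost surely. The contribution of $J^\eps_L$ is handled by Markov together with the SLLN of Section \ref{aux}, since $\eps^d\#J^\eps_L\le M_\eps^{-(d-2)}\eps^d\sum_{z_i\in\Phi^\eps(D)}\rho_i^{d-2}\to 0$. For $J^\eps_C$, the second-moment bound \eqref{finite.variance} and the mixing assumption \eqref{mixing} give $\langle \eps^d\#J^\eps_C\rangle\lesssim r_\eps^d\to 0$, and a Borel--Cantelli argument along a geometric subsequence, combined with monotonicity in $\eps$, upgrades convergence in $L^2$ to almost sure convergence. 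For the inflated part one mirrors the second half of the proof of Lemma \ref{l.geometry.periodic}: every inflated centre lies within macroscopic distance $2\aeps\rho_j$ or $2\eps r_\eps$ from some $z_j\in J^\eps_L\cup J^\eps_C$, and standard packing together with the a.s.\ bound $\aeps\max_{z_j\in\Phi^\eps(D)}\rho_j\lesssim \langle\rho^{d-2}\rangle^{1/(d-2)}$ (obtained as in \eqref{estimate.max} using Lemma \ref{SLLN.general.pp}) keeps $\eps^d\sum_{z_j\in I^\eps_b}\rho_j^{d-2}$ under control. Capacity subadditivity and the matching of scales then yield \eqref{ppp.bad.set}.

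The density claim $\eps^d\#n^\eps\to \langle N(Q)\rangle|D|$ follows by writing $n^\eps=\Phi^\eps(D)\setminus I^\eps_b$, applying the SLLN for the stationary mixing process $\Phi$ from Section \ref{aux} to the first term, and the preceding step to the second. For \eqref{small.distance.bad}, any $z_i\in\Phi^\eps_{2\delta}(D)$ with $\dist(\eps z_i,D^\eps_b)\le \delta\eps$ must lie within microscopic distance $\delta+\mathrm{rad}_k/\eps$ of some $z_k\in I^\eps_b$, and the $2\delta$-separation restricts the count of such $z_i$ per $z_k$ to the relevant packing number. Splitting $I^\eps_b$ into centres with microscopic-bounded inflated radius (where packing is $O(1)$ per bad centre) and the few centres where $\aeps\rho_j$ reaches macroscopic order (controlled by the density bound on $J^\eps_L$ together with $\aeps\max\rho_j<\infty$), and using $\eps^d\#I^\eps_b\to 0$, yields the claim. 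The main obstacle I expect is the simultaneous calibration of $M_\eps$ and $r_\eps$: the requirements $\aeps M_\eps\le \tfrac{\eps r_\eps}{2}$, $r_\eps^d\to 0$, and the vanishing of the capacity of $H^\eps_b$ inside $D^\eps_b$ constrain $r_\eps$ from both above and below, and one must verify that a common window exists under the sole moment condition \eqref{AverRadii}; everything else reduces to the strong laws supplied by Section \ref{aux}.
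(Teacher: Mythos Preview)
Your overall architecture mirrors the paper's: split the bad index set into large-radius centres, close-pair centres, and an ``inflated'' layer, then combine packing with the SLLN machinery of Section~\ref{aux}. The genuine gap is your insistence on a \emph{deterministic} $r_\eps$. You correctly flag the calibration of $r_\eps$ and $M_\eps$ as the main obstacle, but in fact no deterministic window exists under the sole assumption $\langle\rho^{d-2}\rangle<\infty$.

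The failure occurs in the count of inflated centres near $J^\eps_L$. Since the not-yet-bad centres are $2r_\eps$-separated, packing gives
\[
\eps^d\,\#\bigl(\text{inflated near }J^\eps_L\bigr)\ \lesssim\ r_\eps^{-d}\,\Bigl(\aeps\max_{z_j\in\Phi^\eps(D)}\rho_j\Bigr)^{2}\,\eps^d\!\!\sum_{z_j\in J^\eps_L}\rho_j^{d-2}\;+\;\eps^d\#J^\eps_L.
\]
The estimate you cite only says $(\aeps\max\rho_j)^{2}$ is almost surely bounded, not small. With $r_\eps=\eps^\theta$ the prefactor $r_\eps^{-d}=\eps^{-d\theta}$ diverges, so you would need $\eps^d\sum_{J^\eps_L}\rho_j^{d-2}=o(\eps^{d\theta})$. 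Lemma~\ref{l.LLN.index.set} gives no rate, and none is available: if for instance $\P(\rho>t)\sim t^{-(d-2)}(\log t)^{-2}$ then $\langle\rho^{d-2}\rangle<\infty$ while $\langle\rho^{d-2}\1_{\rho>M_\eps}\rangle\sim(\log M_\eps)^{-1}$, which cannot beat any polynomial $\eps^{-d\theta}$ regardless of how $M_\eps\uparrow\infty$. Consequently $\eps^d\#I^\eps_b\to0$ fails, and with it the density claim $\eps^d\#n^\eps\to\langle N(Q)\rangle|D|$.

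The paper's device is to make $r_\eps$ \emph{random} and tie it to the realisation:
\[
r_\eps:=\Bigl(\aeps\max_{z_j\in\Phi^\eps(D)}\rho_j\Bigr)^{1/d}\vee\eps^{\alpha/4},\qquad \alpha\in\Bigl(0,\tfrac{2}{d-2}\Bigr),
\]
and to define the large-radius set via this same scale, $J^\eps_b=\{z_j:\aeps\rho_j\ge \eps r_\eps/2\}$, so there is no independent $M_\eps$. The first branch of the maximum forces $r_\eps^{-d}(\aeps\max\rho_j)\le 1$, which cancels exactly one factor of $\aeps\max\rho_j$ in the packing bound and reduces it to $\eps^d\sum_{J^\eps_b}\rho_j^{d-2}$; the second branch $r_\eps\ge\eps^{\alpha/4}$ is what makes $\eps^d\#J^\eps_b\to0$ work, after which Lemma~\ref{l.LLN.index.set} finishes. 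The statement of the lemma already declares $r_\eps$ to be a random variable precisely for this reason. A secondary point: your treatment of $J^\eps_C$ via an $L^2$ bound and Borel--Cantelli along a geometric subsequence is more delicate than needed (monotonicity in $\eps$ is unclear once the threshold $r_\eps$ also moves); the paper instead sandwiches $N^\eps-N^\eps_{r_\eps}\le N^\eps-N^\eps_{\delta_k}$ for any fixed $\delta_k\downarrow 0$ and applies the SLLN for the thinned processes $\Phi_{\delta_k}$ from Lemma~\ref{SLLN.general.pp} directly.
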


\bigskip

We remark that the lower bounds in \eqref{ppp.distance.good.bad} and \eqref{good.set.ppp} differ from the ones of Lemma \ref{l.geometry.periodic} by the factor $r_\eps$. 
This implies, by \eqref{r.eps.correction}, that the minimal distance between the balls of the ``good'' set $H^\eps_g$ is only $o(\eps)$ for 
$\eps \downarrow 0^+$ and not of order $\eps$ as required in the construction of the functions $\{w_\eps\}_{\eps>0}$ carried out
in the previous subsection. We overcome this technical issue by comparing $w_\eps$ again with
the oscillating test functions $w^\eps_M$ obtained by approximating $H^\eps_g$ by a simpler set $H^{\eps,M}_g$.
Here, $H^{\eps,M}_g$ is obtained not only by truncating the radii at size $M$ as in the previous subsection, but also by considering in $H^{\eps,M}_g$ only the balls whose
centres (in $n^\eps \subset \Phi^\eps(D)$) satisfy \eqref{ppp.distance.good.bad} and \eqref{good.set.ppp} with $M^{-1} \eps$ instead of $r_\eps\eps$. As in the previous
subsection, we show that the sets $H^{\eps,M}$ are a good approximation of $H^\eps$, in the sense that the associated functions $w^\eps_M$ and $w^\eps$ are close in $H^1$. 
This follows from the fact that the centres removed from $H^\eps_g$, which are either too close to each other or to the ``safety layer'' $D^\eps_b$, are few and may be taken
care of by studying the properties of the thinnings $\Phi_{\delta}$ of a process $\Phi$ defined in \eqref{thinned.process}. In fact, the last limit \eqref{small.distance.bad}
of the previous lemma states that the main error in considering the approximate holes $H^{\eps,M}_g$ is given by the points which are too close to each other.

\bigskip

\begin{proof}[Proof of Lemma \ref{l.geometry.ppp}.] 
As in the previous subsection, we suppress the argument $\omega \in \Omega$ for the random sets involved in the argument below. Let us fix an $\alpha \in (0, \frac{2}{d-2})$  and let us define
\begin{align}\label{definition.r}
r_\eps := (\aeps \max_{z_j \in \Phi^\eps(D)} \rho_j)^{\frac{1}{d}} \vee \eps^{\frac{{ \alpha}}{4}}.
\end{align}
With this choice, we prove that the decomposition of $H^\eps$ required by the lemma holds true. We begin by showing that with this definition $r_\eps$ satisfies \eqref{r.eps.correction}: Let
\begin{align}
	F^\eps := \{ z_j \in \Phi^\eps(D) \, | \, \aeps\rho_j \geq \eps\}.
\end{align}
If $F^\eps = \emptyset$, then $r_\eps \leq \eps^{\frac{1}{d}} \vee   \eps^{\frac{\alpha}{4}}$. If otherwise, we estimate
\begin{align}
 \eps^d \max_{z_j \in \Phi^\eps(D)} \rho_j^{d-2} =  \eps^d \max_{z_j \in F^\eps} \rho_j^{d-2} \leq  \eps^d \sum_{z_j \in F^\eps} \rho_j^{d-2}.
\end{align}
To get \eqref{r.eps.correction}, it suffices to show that almost surely the right hand side above tends to zero in the limit $\eps \downarrow 0^+$. By Lemma \ref{l.LLN.index.set}, this holds provided 
\begin{align}
	\label{number.F}
 \eps^d \#(F^\eps) \rightarrow 0 \ \ \ \ \ \eps \downarrow 0^+.
\end{align}
We show this by bounding
\begin{align*}
\eps^d \#(F^\eps) \lesssim \eps^2 \eps^{d} \sum_{z_j \in \Phi^\eps(D)} \rho_j^{d-2},
\end{align*}
and using Lemma \ref{SLLN.general.pp}. We thus established \eqref{r.eps.correction}.

\medskip

Equipped with \eqref{definition.r}, we now set $\eta_\eps= r_\eps \eps$ and begin by constructing the sets $H^\eps_b$ and $D^\eps_b$. As in Lemma \ref{l.geometry.periodic}, we denote by $I^\eps_b$ the set of points in $\Phi^\eps(D)$ which generate the sets $H^\eps_b$ and $D^\eps_b$. We start by requiring that $I^\eps_b$ contains the points in $\Phi^\eps(D)$ whose associated radii are ``too big'', namely
the set
\begin{align}\label{index.set.ppp.1}
J^\eps_b= \Bigl\{ z_j \in \Phi^\eps(D) \colon \aeps\rho_j \geq \frac{\eta_\eps}{2}\Bigr\}.
\end{align}
Similarly to the periodic case, we set
\begin{align}\label{H.tilde.ppp}
 \tilde H^\eps_b := \bigcup_{z_j \in  J^\eps_b} B_{2 \aeps \rho_j}(\eps z_j).
\end{align}
We now include in $I^\eps_b$ also the points in $ \Phi^\eps(D) \backslash J^\eps_b$ which, in spite of having radii below the threshold set in definition \eqref{index.set.ppp.1}, are too close to each other. We indeed define
\begin{align}\label{index.set.ppp.2}
K^{\eps}_{b}:= \Phi^\eps(D) \backslash \left(\Phi^\eps_{2 r_\eps}(D) \cup J^\eps_b\right).
\end{align}
 Finally, we include into $I^\eps_b$ also the set of points which are not in $J^\eps_b \cup K^\eps_b$, but which might are close to $\tilde H^\eps_b$: We denote them by
\begin{align}\label{index.set.ppp.3}
\tilde I^\eps_b := \left \{ z_j \in \Phi^\eps(D) \backslash(J^\eps_b \cup K^\eps_b) \colon  \tilde H^\eps_b  \cap B_{\eta_\eps}(\eps z_j) \neq \emptyset \right\}.
\end{align}
\smallskip
We thus set
\begin{align}\label{definition.bad.index.ppp}
	I^\eps_b& = \tilde I^\eps_b \cup J^\eps_b \cup  K^\eps_b,\\
H^\eps_b:=  \bigcup_{z_j \in I^\eps_b} B_{\aeps \rho_j}(\eps z_j), \ \ \ H^\eps_g&:= H^\eps \setminus H^\eps_b, \ \ \ \ \ D^\eps_b:= \bigcup_{z_j \in I^\eps_b}  B_{2\aeps \rho_j}(\eps z_j). \label{bad.set.ppp}
\end{align}

\medskip

It remains to show that, with $H^\eps_b$, $H^\eps_g$ and $D^\eps_b$ defined as in \eqref{bad.set.ppp}, properties  \eqref{ppp.distance.good.bad}, \eqref{ppp.bad.set}, \eqref{good.set.ppp} and \eqref{small.distance.bad} are satisfied. We start with \eqref{ppp.bad.set}. As in the proof of Lemma \ref{l.geometry.periodic}, the definition of $D^\eps_b$ allows us to estimate by the sub-additivity of the capacity
\begin{align*}
 \text{Cap}(H^\eps_b, D^\eps_b) \lesssim \eps^d\sum_{z_j \in I^\eps_b}\rho_j^{d-2}.
\end{align*}
Thanks to Lemma \ref{l.LLN.index.set}, we conclude that, almost surely, when $\eps \downarrow 0^+$, the right-hand side above vanishes provided that almost surely
\begin{align}
	\label{number.I_b}
 \lim_{\eps\downarrow 0^+}\eps^d \#(I^\eps_b) = 0.
\end{align}
We show this by using definition \eqref{definition.bad.index.ppp} and proving that each of the sets which constitute $I^\eps_b$ satisfies the limit above. We begin with $J^\eps_b$: Definitions \eqref{definition.r} and \eqref{index.set.ppp.1} yield
\begin{align*}
\eps^d \#(J^\eps_b) \lesssim \eps^2 r_\eps^{-(d-2)}  \eps^{d} \sum_{z_j \in \Phi^\eps(D)} \rho_j^{d-2} \leq \eps^{2 - \alpha(d-2)}  \eps^{d} \sum_{z_j \in \Phi^\eps(D)} \rho_j^{d-2}.
\end{align*}
By Lemma \ref{SLLN.general.pp} and the assumption $\alpha < \frac{2}{d-2}$, the right-hand side almost surely vanishes in the limit $\eps \downarrow 0^+$. We thus established
\begin{align}\label{number.J_b}
\lim_{\eps \downarrow 0^+} \eps^d \#(J^\eps_b)=0,
\end{align}
i.e. limit \eqref{number.I_b} for $J^\eps_b$. We now turn to $K^\eps_b$: Let $\{\delta_k \}_{k \in \N}$ be any sequence such that $\delta_k \downarrow 0^+$. Since $r_\eps$ satisfies \eqref{r.eps.correction}, we estimate for any $\delta_k$
\begin{align}
	 \limsup_{\eps \downarrow 0^+} \eps^d \#(K^\eps_b)\stackrel{\eqref{index.set.ppp.2}}{=} \limsup_{\eps \downarrow 0^+} \eps^d \bigl(N^\eps(D) - N^\eps_{r_\eps}(D)\bigr) 
	 \stackrel{\eqref{thinned.process}}{\leq} \limsup_{\eps \downarrow 0^+} \eps^d \bigl(N^\eps(D) - N^\eps_{\delta_k}(D)\bigr).
\end{align}
 We now apply Lemma \ref{SLLN.general.pp} to $\Phi$ and each $\Phi_{\delta_k}$ to deduce that almost surely and for every $\delta_k$
\begin{align}
	\limsup_{\eps \downarrow 0^+} \eps^d \#(K^\eps_b) \leq \langle  N(Q) - N_{\delta_k}(Q) \rangle |D|,
\end{align}
where $Q$ is a unit cube. By sending $\delta_k \downarrow 0^+$, Lemma \ref{SLLN.general.pp} yields 
\begin{align}
	\label{number.K_b}
	\lim_{\eps \downarrow 0^+} \eps^d \#(K^\eps_b) = 0.
\end{align}

\smallskip

To conclude the proof of \eqref{number.I_b}, it remains to show that almost surely also
\begin{align}
	\label{number.tildeI_b}
 \eps^d \#(\tilde I^\eps_b) \rightarrow 0 \ \ \ \ \ \eps \downarrow 0^+.
\end{align}
By definitions \eqref{index.set.ppp.1}, \eqref{index.set.ppp.2} and \eqref{index.set.ppp.3}, for each $z_i \in  \Phi^\eps(D)\backslash (J^\eps_b \cup K^\eps_b)$, we have 
\begin{align}\label{minimal.dist}
 \min_{z_j \in \Phi^\eps(D) \backslash\{ z_i\}}\eps| z_j - z_i| \geq 2\eta_\eps, \qquad  \aeps\rho_i < \frac{\eta_\eps}{2}.
\end{align}
On the one hand, by the first inequality above, the balls $\{ B_{\eta_\eps}( \eps z_i ) \}_{z_i \in \tilde I^\eps_b}$ are all disjoint and satisfy
\begin{align*}
\eps^d \#(\tilde I^\eps_b) &\lesssim \eps^d \sum_{z_i \in \tilde I^\eps_b} \eta_\eps^{-d} |B_{\eta_\eps}(\eps z_i)| =  r_\eps^{-d} \sum_{z_i \in \tilde I^\eps_b} |B_{\eta_\eps}(\eps z_i)| .
\end{align*}
In addition, the inequalities \eqref{minimal.dist}, definitions \eqref{index.set.ppp.1}, \eqref{index.set.ppp.2} and  \eqref{H.tilde.ppp} also imply that  $|\bigcup_{z_i \in \tilde I^\eps_b} B_{\eta_\eps}( \eps z_i )| \lesssim |\tilde H^\eps_b|$. By wrapping up the previous two inequalities, we bound
\begin{align}
	\label{number.tildeI_b.2}
	\eps^d \#(\tilde I^\eps_b) &\lesssim r_\eps^{-d} |\tilde H^\eps_b| \stackrel{\eqref{H.tilde.ppp}}{\lesssim}  r_\eps^{-d} \sum_{z_j \in J_b^\eps} \Bigl(\aeps \rho_j\Bigr)^d \\
	& \lesssim r_\eps^{-d} \biggl(\aeps \max_{z_j\in J^\eps_b} \rho_j\biggr)^2 \sum_{z_j \in  J^\eps_b} \Bigl(\aeps \rho_j\Bigr)^{d-2}.
\end{align}
By definition \eqref{definition.r}, the inequality above reduces to
\begin{align*}
\eps^d \#( \tilde I^\eps_b) \lesssim  \eps^d \sum_{j \in  J^\eps_b} \Bigl(\aeps \rho_j\Bigr)^{d-2}.
\end{align*}
Thanks to \eqref{number.J_b}, we apply Lemma \ref{l.LLN.index.set} and deduce \eqref{number.tildeI_b}. This, together with \eqref{number.J_b} and \eqref{number.K_b}, yields \eqref{number.I_b} and concludes the proof of \eqref{ppp.bad.set}.

\medskip

To show \eqref{ppp.distance.good.bad}, we recall the definitions of $D^\eps_b$, $H^\eps_b$ and $H^\eps_g$ in \eqref{bad.set.ppp} and set $n^\eps:= \Phi^\eps(D)\backslash I^\eps_b$. Since all $z_i \in n^\eps$ satisfy \eqref{minimal.dist} and thus also
\begin{align}
\operatorname{dist}\Bigl( B_{ \aeps \rho_i}(\eps z_i) , \partial B_{\eta_\eps}(\eps z_i)\Bigr) \geq \frac{\eta_\eps}{2},
\end{align}
 by definition \eqref{bad.set.ppp} of $D_b^\eps$, it suffices to show  that for all $z_i \in \Phi^\eps(D)\backslash I^\eps_b$ and all $z_j \in I^\eps_b$ we have
\begin{align}
	\label{ppp.good.bad.intersection}
	 B_{\eta_\eps}(\eps z_i) \cap  B_{2 \aeps \rho_j}(\eps z_j) = \emptyset.
\end{align}
For all $z_j \in J_b^\eps \subset I^\eps_b$, this identity holds by \eqref{index.set.ppp.3} and the definition of $n^\eps$. If $z_j \in I_b^\eps \backslash J_b^\eps$,
then we know that $2 \aeps \rho_j \leq \eta_\eps$ and, by \eqref{minimal.dist} for $z_i$, we obtain \eqref{ppp.good.bad.intersection} also in this case. This establishes  \eqref{ppp.good.bad.intersection} and also \eqref{ppp.distance.good.bad}.

\medskip

Finally, the properties \eqref{good.set.ppp} of the set $H^\eps_g$ are a consequence of \eqref{minimal.dist}, definition \eqref{number.I_b} and \eqref{average.number} of  Lemma \ref{SLLN.general.pp}. 

\medskip

 To show \eqref{small.distance.bad}, we resort to the definition of $D^\eps_b$ to estimate
  \begin{align*}
 \bigl\{  z_i \in \Phi^\eps_{2\delta}(D)(\omega)  &\colon  \mbox{dist}( z_i , D^\eps_b ) \leq \delta \eps \bigr\}\\
 & \begin{aligned}
 	\subset I_b^\eps & \cup \Bigl\{  z_i \in n^\eps(\omega)  \colon  \dist\Bigl( z_i , \bigcup_{z_j \in J^\eps_b} B_{2 \aeps \rho_j}(\eps z_j)  \Bigr) \leq \delta \eps \Bigr\} \\
  &\cup  \Bigl\{  z_i \in n^\eps(\omega) \cap \Phi^\eps_{2\delta}(D)(\omega) \colon  \dist\Bigl( z_i , \bigcup_{z_j \in \tilde I^\eps_b \cup K^\eps_b} B_{2 \aeps \rho_j}(\eps z_j)  \Bigr) \leq \delta \eps \Bigr\}
  \end{aligned} \\
 &:= I_b^\eps \cup E^\eps \cup C^\eps .
\end{align*}
We already know $ \eps^d \# (I_b^\eps) \to 0$.
Next, we argue that 
\begin{align}\label{estimate.set.1}
\eps^d \#( E^\eps ) \rightarrow 0.
\end{align}
This follows by an argument similar to the one for \eqref{number.tildeI_b}: Then, we may choose $\eps_0=\eps_0(d)$ such that for all $\eps \leq \eps_0$, property \eqref{r.eps.correction} yields $\eta_\eps= \eps r_\eps \leq \delta \eps$. By definition of $J_b^\eps$ in \eqref{index.set.ppp.1} and of $E^\eps$ above, we infer that for such $\eps \leq \eps_0$, for all $z_j \in E^\eps$ there exists $z_i \in J_b^\eps$ such that
\begin{align}\label{inclusion.E}
	B_{\eta_\eps}(\eps z_j) \subset B_{2 \delta\eps + 2 \aeps \rho_i}(\eps z_i) \subset B_{6 \delta r_\eps^{-1} \aeps \rho_i}(\eps z_i),
\end{align}
where in the second inequality we use that $r_\eps^{-1}\delta \geq  1$.  We note that by \eqref{good.set.ppp} the balls $B_{\eta_\eps}(\eps z_j)$ with $z_j \in n^\eps$ are all disjoint. Hence,
\begin{align*}
\eps^d \#( E^\eps )= r_\eps^{-d} \eta_\eps^d \#( E^\eps )& \stackrel{\eqref{inclusion.E}}{\lesssim} r_\eps^{-d} \biggl | \bigcup_{z_i \in J^\eps_b} B_{6 \delta r_\eps^{-1} \aeps \rho_i}(\eps z_i) \biggr| \\
& \lesssim \delta^d r_\eps^{-2d} \sum_{z_i \in J^\eps_b} \Bigl(\aeps \rho_i \Bigr)^d 
\stackrel{\eqref{definition.r}}{\leq}\delta^d \eps^d \sum_{z_i \in J^\eps_b} \rho_i^{d-2} .
\end{align*}
By Lemma \ref{l.LLN.index.set}  and \eqref{number.J_b}, almost surely the right hand side tends to zero in the limit $\eps \downarrow 0^+$.

\medskip 
We conclude the argument for  \eqref{small.distance.bad} by showing that the set $C^\eps$ is empty when $\eps$ is small: In fact, by construction, if $z_i \in n_\eps$ satisfies
\begin{align}
	\dist\biggl( \eps z_i , \bigcup_{z_j \in \tilde I^\eps_b \cup K^\eps_b} B_{2 \aeps \rho_j}(\eps z_j)  \biggr) \leq \delta \eps,
\end{align}
then there exists a $z_j \in \tilde I^\eps_b \cup K^\eps_b$ such that for $\eps \leq \eps_0$
\begin{align*}
\eps |z_i - z_j| \leq \mbox{dist}\Bigl( \eps z_i, B_{2 \aeps \rho_j}(\eps z_j) \Bigr) + \eta_\eps \leq 2\delta \eps.
\end{align*}
 This yields $C^\eps \subset \Phi^\eps(D) \backslash \Phi_{2 \delta}(D)$ and thus that it is empty since by definition we also have $C^\eps \subset \Phi^\eps_{2\delta}(D)$. 
The proof of Lemma \ref{l.geometry.ppp} is complete.
\end{proof}

\bigskip

\begin{proof}[Proof of Lemma \ref{l.test.periodic}, general case. ] We split the proof of the lemma in the same steps as in the proof for the case of periodic centres (case (a)).
Some of these steps may be proven exactly as in the previous subsection by relying on Lemma \ref{l.geometry.ppp} instead of Lemma \ref{l.geometry.periodic}. We thus focus below only on the parts of the proof which differ from Subsection \ref{periodic}.

\medskip

\emph{Step 1.} Since by Lemma \ref{l.geometry.ppp}, the sets $H^\eps_g$ and $D^\eps_b$ are disjoint, the splitting $w^\eps= w^\eps_1 \wedge w^\eps_2$ with
$w^\eps_1, w^\eps_2$ solving \eqref{splitting.bad}, \eqref{splitting.good} and \eqref{reduction.bad} remains unchanged from the case of periodic centres.

 \medskip
 
\emph{Step 2.} Again by Lemma \ref{l.geometry.ppp}, we construct the sequence $\{w^\eps_1 \}_{\eps> 0}$ satisfying \eqref{splitting.bad} and \eqref{reduction.bad} as
in Subsection \ref{periodic}. We thus only need to focus on the construction of the functions $\{ w^\eps_2\}_{\eps>0}$, which we set equal to $1$ on $D^\eps_b$. 
For each $z_j \in n^\eps$, with $n^\eps$ being  the set of centers of the particles in $H^\eps_g$ (see Lemma \ref{l.geometry.ppp}), we denote
the random variables 
\begin{align}
	\label{def.d^eps_i}
d^\eps_j := \min\Bigl\{\operatorname{dist}( \eps z_j, D^\eps_b), \frac{1}{2}\min_{ i \neq j} \eps |z_i - z_j|, \eps \Bigr\} .
\end{align}
We remark that, in contrast with case (a) where we had by Lemma \ref{l.geometry.periodic} that $d^\eps_j \geq \frac{\eps}{2}$, here Lemma \ref{l.geometry.ppp} only implies that, for $\eps$ small, $d^\eps_j \geq r_\eps\eps$ with $r_\eps$ satisfying \eqref{r.eps.correction}.
By defining for each $z_j \in n^\eps$ the sets 
$$
T_j^\eps = B_{\eps^{\frac{d}{d-2}} \rho_j} (\eps z_j) \ \ \ \ B_j =  B_{d^\eps_j}(\eps z_j),
$$
we consider the function $w_2^{\eps,j}$ solving \eqref{cell.problem.0} in $B^j \backslash T^j$ and hence defined as
\begin{align}\label{ppp.cell.problem}
	w^{\varepsilon, j}_2(x) = \begin{cases}
		 \frac{ |x- \eps z_j|^{-(d-2)} - \left(d^\eps_j\right)^{-(d-2)}}{\eps^{-d} \rho_i^{-(d-2)} - \left(d^\eps_j\right)^{-(d-2)}} \quad \text{in} ~ B_j \setminus T_j \\
		 	 1 \quad \text{in} ~ T_j\\
	 0 \quad \text{in} ~ D \setminus B_j.
	\end{cases}
\end{align}

Note, that by definition of $d^\eps_j$, \eqref{def.d^eps_i}, the functions $\nabla w^{\eps,j}$ have disjoint support.
Moreover, for $\eps$ sufficiently small,
\begin{align}
	\label{big.small.radius}
	d^\eps_j \geq  2 \aeps \rho_j.
\end{align}
Indeed, by Lemma \ref{l.geometry.ppp},
\begin{align}
	2 \aeps \rho_j \leq \eps r_\eps \leq \min \Bigl\{\frac{1}{2}\min_{ i \neq j} \eps |z_i - z_j|, \eps\Bigr\},
\end{align}
and
\begin{align}
	2 \aeps \rho_j \leq  \aeps \rho_j + \frac {\eps r_\eps}{2} \leq \aeps \rho_j + \operatorname{dist}( T_j, D^\eps_b)
	= \operatorname{dist}( \eps z_j, D^\eps_b).
\end{align}

We thus set 
\begin{align}\label{w.2.ppp}
w^\eps_2 = 1 - \sum_{z_j \in  n^\eps} w^{\eps,j}_2,
\end{align}
which immediately satisfies condition \eqref{splitting.good}. Therefore, as discussed in Step 1, the function $w^\eps= w^\eps_1 \wedge w^\eps_2$ satisfies (H1) and it suffices
to prove (H2)-(H3) only for $w^\eps_2$.

\medskip

\emph{Step 3.} We begin by showing that $w^\eps_2$ satisfies (H2): By the triangular inequality 
and definitions \eqref{ppp.cell.problem} and \eqref{w.2.ppp},  we estimate 
\begin{align}\label{estimate.gradients}
\| \nabla w^\eps_2 \|^2_{2} = \sum_{z_i \in  n^\eps} \| \nabla w^{\eps,i}_2\|^2_{L^2(B_i)} 
&\lesssim \sum_{z_i \in n^\eps} \frac{\eps^d \rho_i^{d-2}}{1- \Big(\frac{\aeps \rho_i}{d_\eps}\Big)^{d-2} } \notag\\
&\stackrel{\eqref{big.small.radius}}{\lesssim}
 \sum_{i \in n^\eps}{\eps^d\rho_i^{d-2}}\stackrel{n^\eps \subset \Phi^\eps(D)}{\lesssim}\sum_{i \in \Phi^\eps (D)}{\eps^d\rho_i^{d-2}} .
\end{align}
By Lemma \ref{SLLN.general.pp}, the right-hand side above is almost surely bounded in the limit $\eps \downarrow 0^+$. This, together with Poincar\'e's inequality for
$1- w^\eps_2$ in $D$, yields that almost surely, up to a subsequence, we have $w^\eps_2 \rightharpoonup w$ in $H^1(D)$ when $\eps \downarrow 0^+$. 

\smallskip

We claim that $w\equiv 1$. To this purpose, it is useful to consider  the following ``truncated'' processes  $(n^\eps_M, \{\rho_{j,M} \}_{j\in n^\eps})$ which
we construct in the following way: For any $M \in \N$, we set
\begin{align}
n^\eps_{M}:= \Bigl\{ z_i \in  n^\eps \colon d_j^\eps \geq \frac{\eps}{M}\Bigr\}, \ \ \ \ \rho_{j,M} = \rho_j \wedge M.
\end{align}
In addition, let
\begin{align*}
H^{\eps,M}_g := \bigcup_{z_j \in n^{\eps}_M} B_{\aeps\rho_{j,M}}(\eps z_j), \ \ \ D^{\eps,M}:= D \backslash ( H^{\eps,M}_g \cup H^\eps_b),
\end{align*}
and let $w_2^{\eps,M}$ be the function constructed as in \eqref{w.2.ppp} and \eqref{ppp.cell.problem} for the set $H^{\eps,M}_g$.

By the same argument above for $w^\eps_2$, almost surely and up to a subsequence ,it holds $w_2^{\eps,M} \rightharpoonup w^M$ for every $M \in \N$. Moreover, since $1 - w^{\eps,M}_{2}=0$ on $\Rd \backslash \bigl( \bigcup_{z_i \in n^{\eps}_M} B_i \bigr)$ and the balls $B_i$ are pairwise disjoint and have radii in $[M^{-1}\eps, \eps]$ e may argue as in Subsection \ref{periodic} and infer that almost surely, and for every $M\in \N$, $w^{\eps,M}_2 \wto 1$ in $H^1(D)$, and therefore also strongly in $L^2(D)$.  This implies by the triangular inequality 
\begin{align*}
\limsup_{\eps \downarrow 0^+} \| w^\eps_2 - 1 \|_{2}^2 \leq \limsup_{M \uparrow \infty} \limsup_{\eps \downarrow 0^+} \bigl\| w^\eps_2 - w_2^{\eps,M} \bigr\|_2^2 .
\end{align*}
Condition (H2) holds for $w^\eps_2$ provided that the limit on the right-hand side above vanishes. By Poincar\'e's inequality in $D$, it suffices to prove that 
\begin{align}\label{calculation.gradients.ppp}
\lim_{M\uparrow \infty} \limsup_{\eps \downarrow 0^+} \| \nabla ( w^\eps_2 - w_2^{\eps,M}) \|_2^2 = 0.
\end{align}
To show this, we argue as follows: By construction
\begin{align}
&w^\eps_2 = w_2^{\eps,M} \ \ \ \text{ in $B_i \subset D\backslash D^\eps_b$, whenever $\rho_i \leq M$ and $d_i \geq M^{-1}\eps$,}\\
&w_2^{\eps,M} \equiv 1 \ \ \ \ \ \ \text{in $B_i$, whenever $d_i \leq M^{-1}\eps$.}
\end{align}
This implies that the $L^2$-norm on the left-hand side above reduces to
\begin{align}\label{measure.conv.truncated}
 \bigl\| \nabla \bigl( w^\eps - w_2^{\eps,M}\bigr) \bigr\|_2^2 &= \sum_{z_i \in n^\eps}  \bigl\| \nabla \bigl( w_2^{\eps,i} - w_2^{\eps,M,i}\bigr) \bigr\|_2^2 \1_{\rho_i \geq M} \1_{ d_i \geq M^{-1}\eps }
\notag\\&\quad \quad
  +   \sum_{z_i \in n^\eps} \| \nabla w_2^\eps \|_2^2 \1_{d_i \leq M^{-1}\eps }.
\end{align}
Similarly to \eqref{estimate.gradients}, we use the explicit formulation for $w^\eps_2$, $w^{\eps,M}_2$ to control the first term on the right-hand side above by
\begin{align*}
\sum_{z_i \in n^\eps}& \bigl\| \nabla \bigl( w_2^{\eps,i} - w_2^{\eps,M,i}\bigr) \bigr\|_2^2 \1_{\rho_i \geq M} \1_{ d_i \geq M^{-1}\eps }
{\lesssim}  \sum_{z_i \in n^\eps}\eps^d \rho_i^{d-2} \1_{\rho_i \geq M}.
\end{align*}
By Lemma \ref{SLLN.general.pp} applied to the process $\Phi$ with marks $\{ \rho_i^{d-2} \1_{\rho_i \geq M} \}_{z_i \in \Phi}$, and the assumption \eqref{AverRadii}, we
obtain that almost surely
\begin{align}\label{measure.conv.truncated.1}
\lim_{M\uparrow +\infty}\limsup_{\eps\downarrow 0^+}\sum_{z_i \in n^\eps}& \bigl\| \nabla \bigl( w_2^{\eps,i} - w_2^{\eps,M,i}\bigr) \bigr\|_2^2 \1_{\rho_i \geq M} \1_{ d_i \geq M^{-1}\eps }
=  0.
\end{align}
By using again the same estimate as in \eqref{estimate.gradients} for $w^\eps_2$, we have that
\begin{align}
	\label{small.distance.gradients}
\sum_{z_i \in n^\eps}\| \nabla w_2^{\eps,i} \|_2^2 \1_{d_i \leq M^{-1}\eps } 
&{\lesssim}  \sum_{z_i \in n^\eps} \eps^d \rho_i^{d-2} \1_{d_i \leq M^{-1}\eps}.
\end{align}
By Definition of  $d_i$ in \eqref{def.d^eps_i}, we have that if $d_i \leq M^{-1} \eps$, then either 
$z_i \in \Phi^\eps(D) \backslash \Phi_{2 M^{-1}}^\eps(D)$ or 
\begin{align}
z_i \in  I^\eps_M := \Bigl\{  z_i \in n_\eps \cap \Phi^\eps_{2 M^{-1}}(D) \, \colon \,\dist( z_i , D^\eps_b ) \leq \frac \eps M \Bigr\}.
\end{align} 

Hence, from  \eqref{small.distance.gradients} we obtain
\begin{align}
\limsup_{\eps \downarrow 0+} \sum_{z_i \in n^\eps}\| \nabla w_2^{\eps,i} \|_2^2 \1_{d_i \leq M^{-1}\eps } 	
&\leq \limsup_{\eps \downarrow 0+} \eps^{d} \sum_{ z_i \in \Phi^\eps(D) \backslash \Phi^{2 M^{-1}, \eps}(D)} \rho_i^{d-2} +  \limsup_{\eps \downarrow 0^+}\eps^{d} \sum_{ z_i \in I^\eps_M} \rho_i^{d-2} \\
\end{align}
On the one hand, by \eqref{small.distance.bad} and Lemma \ref{l.LLN.index.set}, the second term on the right-hand side vanishes. On the other hand, Lemma \ref{SLLN.general.pp} for both $\Phi$ and
 $\Phi^{2M^{-1}}$ imply that
\begin{align}
	\limsup_{\eps \downarrow 0+} \sum_{z_i \in n^\eps}\| \nabla w_2^{\eps,i} \|_2^2 \1_{d_i \leq M^{-1}\eps } 
	\leq \langle  N(Q) - N_{2 M^{-1}}(Q) \rangle \langle \rho^{d-2} \rangle,
\end{align}
where $Q$ is a unit cube. Finally, the right-hand side in the above estimate converges to zero in the limit $M \to \infty$ again by  Lemma \ref{SLLN.general.pp}.
If we now wrap up the previous estimate with \eqref{measure.conv.truncated.1} and \eqref{measure.conv.truncated}, we conclude  \eqref{calculation.gradients.ppp}. 
We thus established (H1) and (H2) for the sequence $w^\eps_2$ (and thus also fo $w^\eps$).

\medskip

It remains to prove (H3). 
 We consider again the truncated sequences $\{w^{\eps,M}_2\}_{\eps>0}$ above and start by arguing that it is enough to show that, for every $M \in \N$ fixed, condition (H3) is
 satisfied by $w^{\eps,M}_2 $, namely
\begin{align}\label{H3.truncated}
	\bigl( -\Delta w^{\eps,M}_2, v_\varepsilon \bigr)_{H^{-1}, H^{1}_0} \rightarrow C_{0,M}\int_D v,
\end{align}
where $C_{0,M}:= (d-2) \sigma_d \langle N_{2M^{-1}}(Q) \rangle \langle \, \rho_M^{d-2} \, \rangle$. In fact, Cauchy-Schwarz's
inequality yields 
\begin{align*}
 \bigl|\bigl( -\Delta(w^\eps_2 -  w^{\eps,M}_2), v_\varepsilon \bigr)_{H^{-1}, H^{1}_0}\bigr| 
& \leq  \biggl( \int |\nabla v_\eps|^2\biggr)^{\frac 1 2}\biggl( \int |\nabla ( w^\eps_2 - w^{\eps,M}_2)|^2\biggr)^{\frac 1 2},
\end{align*}
and, as the family $\{ v^\eps\}_{\eps>0}$ is uniformly bounded in $H^1(D)$, by \eqref{calculation.gradients.ppp} we get
\begin{align*}
\lim_{M\to \infty} \limsup_{\eps \downarrow 0^+} |( -\Delta(w^\eps_2 -  w^{\eps,M}_2), v_\varepsilon )_{H^{-1}, H^{1}_0}| =0.
\end{align*}
Since $C_{0,M} \to C_0$ when $M \uparrow +\infty$ by \eqref{convergence.average} of Lemma \ref{SLLN.general.pp}, the above limit and the triangular inequality yield that to prove (H3) it suffices to show  \eqref{H3.truncated}.

\medskip

 The proof of \eqref{H3.truncated} follows the same lines of  Step 3. in Subsection \ref{periodic} (in particular, \eqref{convergence.mu} for $w^{\eps,M}_2$), so we just point out the differences: By arguing as in that case, it suffices to prove that
\begin{align}
	\label{H3.eta}
	\eta_M^\eps := \sum_{z_i \in n^\eps_M}	d(d-2)\rho_{i,M}^{d-2} \frac{\eps^d}{d_i^d} \1_{B_i} 
	&\stackrel{*}{\rightharpoonup} C_{0,M} \quad \text{in $L^\infty(D)$}
\end{align}
The factor $\eps^d d_i^{-d}$ in the above expression is due to the fact that the balls $B_i$ have now radii $d_i$ instead of $\eps$.
Hence, by including this factor, we have 
\begin{align}
	\label{L^1.norm.indicator.ball}
	\|\eps^d d_i^{-d} \1_{B_i}\|_{L^1} = \|\1_{B_\eps(\eps z_i)}\|_{L^1} = \eps^d
\end{align}
as in the periodic case.

Since
\begin{align}
	\| \eta_M^\eps\|_{L^\infty} \lesssim  M^{d(d-2)}, 
\end{align}
to show \eqref{H3.eta} it suffices to test $\eta_M^\eps$ with functions $\zeta \in C^1_0(D)$.
We observe that if we define
\begin{align}
	\tilde\eta_M^\eps := \sum_{z_i \in \Phi^\eps_{2M^{-1}}  (D)} d(d-2)\rho_{i,M}^{d-2} \frac{\eps^d}{d_i^d} \1_{B_i} ,
\end{align}
then as in Step 3 of Subsection \ref{periodic}, we use Lemma \ref{SLLN.general.pp} for $\Phi_{2M^{-1}}$ and Lemma \ref{l.w.independent} applied to $(\Phi_{2M^{-1}}, \{\rho_{i,M}^{d-2}\})$
to infer that, almost surely and for all $\zeta \in C_0^1(D)$,
\begin{align}
	\label{H3.etatilde}
	\int_D  \tilde\eta_M^\eps \zeta \to C_{0,M} \int_D \zeta.
\end{align}
We conclude \eqref{H3.eta} by arguing that, almost surely and for all
$\zeta \in C^1_0(D)$, we have
\begin{align}
	\left|\int_D (\eta_M^\eps - \tilde\eta_M^\eps) \zeta\right| \to 0.
\end{align}
Indeed,
\begin{align}
\left|\int_D (\eta_M^\eps - \tilde\eta_M^\eps) \zeta\right| 
&\lesssim  M^{d-2} \sum_{z_i \in \Phi^\eps_{2M^{-1}}(D) \backslash  n^\eps_M} \int_{B_\eps(\eps z_i)}  |\zeta| \\
& \lesssim M^{d-2} \| \zeta\|_\infty \eps^d  \# \Bigl( \Bigl\{ z_i \in \Phi^\eps_{2M^{-1}} (D) \colon d_i \leq  \frac \eps M\Bigr\} \Bigr).
\end{align} 
By definition of $ d_i$ and \eqref{small.distance.bad}, the right-hand side tends to zero almost surely in the limit $\eps \to 0$.
This concludes the proof of \eqref{H3.eta} and thus establishes (H3) for the sequence $\{ w^{\eps}_2 \}_{\eps>0}$. 
The proof of Lemma \ref{l.test.periodic} is complete.
\end{proof}
\section{Auxiliary results}\label{aux}
In this section, we prove some auxiliary results on which  the proofs of Section \ref{section.oscillating} rely. 
We give these results for a general marked point process $(\Phi , \X)$ with $\Phi$ satisfying \eqref{finite.variance} and \eqref{mixing} and with the marks
$\X:=\{X_i \}_{z_i\in \Phi}$ satisfying \eqref{correlation.radii.0} with 
\begin{align}\label{correlation.radii.1}
\langle X \rangle = \int_{0}^{+\infty} x h(x) dx < +\infty
\end{align}
and with the function $g$ being bounded as in \eqref{correlation.radii.2} (with $\rho$ substituted by $x$ and with $p > 2$).  

 \begin{lem}\label{SLLN.general.pp}
Let $Q$ a unitary cube and let $(\Phi, \X)$ be a marked point process as introduced above.
Then, for every bounded set $B \subset \Rd$ {which is star-shaped with respect to the origin}, we have 
 \begin{align}\label{average.number}
 \lim_{\eps \downarrow 0^+} \eps^d N^\eps(B)= \langle N(Q) \rangle  |B| \ \ \ \ \text{almost surely},
 \end{align}
and
\begin{align}\label{SLLN.convergence.pp}
\lim_{\eps \downarrow 0^+} \eps^{d}\sum_{ z_i \in \Phi^\eps(B)} X_i =  \langle N(Q) \rangle \langle X \rangle |B| \ \ \ \ \text{almost surely.}
\end{align}
 
 \smallskip
 
Furthermore, for every $\delta < 0$ the process $\Phi_\delta$ obtained from $\Phi$ as in \eqref{thinned.process} satisfies the analogues of \eqref{SLLN.convergence.pp}, \eqref{average.number} and
\begin{align}\label{convergence.average}
\lim_{\delta \downarrow 0^+}\langle N_\delta(A) \rangle = \langle N(A) \rangle
\end{align}
for every bounded set $A \subset \Rd$.

\end{lem}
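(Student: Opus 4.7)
The plan is to prove \eqref{average.number} and \eqref{SLLN.convergence.pp} by a variance plus Borel--Cantelli argument, and then to deduce the statements for the thinning $\Phi_\delta$ and the continuity \eqref{convergence.average} by general principles.

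First I would identify the correct mean. Stationarity \eqref{finite.variance} combined with the first correlation function \eqref{correlation.radii.0} gives, by a Campbell-type computation,
\begin{equation*}
\langle \eps^d N^\eps(B)\rangle = \eps^d \Bigl\langle N\bigl(\tfrac{1}{\eps}B\bigr)\Bigr\rangle = \langle N(Q)\rangle\,|B|,
\end{equation*}
and similarly $\langle \eps^d \sum_{z_i\in\Phi^\eps(B)} X_i\rangle = \langle N(Q)\rangle\langle X\rangle\,|B|$, using also \eqref{correlation.radii.1}. Hence the target limits coincide with the expectations and the task reduces to concentration.

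Next, I would bound the variance. Cover $\frac{1}{\eps}B$ by $O(\eps^{-d})$ essentially disjoint unit cubes $Q_k$ and write
\begin{equation*}
\mathrm{Var}\bigl(\eps^d N^\eps(B)\bigr) = \eps^{2d}\sum_{k,\ell}\mathrm{Cov}\bigl(N(Q_k),N(Q_\ell)\bigr).
\end{equation*}
The mixing assumption \eqref{mixing} applied to $\xi_1 = N(Q_k)$ and $\xi_2 = N(Q_\ell)$, together with the second-moment bound \eqref{finite.variance}, yields $|\mathrm{Cov}(N(Q_k),N(Q_\ell))|\lesssim (1+|k-\ell|)^{-\gamma}$. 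Since $\gamma>d$, summing over $\ell$ at fixed $k$ gives $O(1)$, and the total variance is $O(\eps^d)$. For the weighted sum in \eqref{SLLN.convergence.pp}, the decomposition \eqref{correlation.radii.0}--\eqref{correlation.radii.2} splits the covariance into (i) a $\Phi$-only term handled exactly as above and (ii) a mark-correlation term controlled by $g$, whose decay $(1+r^\gamma)^{-1}$ provides the spatial summability while the factor $(1+\rho^p)^{-1}$ with $p>d-1$ ensures finiteness of the second moment of the $X_i$'s conditionally on the positions. Chebyshev's inequality applied along $\eps_n=2^{-n}$ then gives a summable tail and Borel--Cantelli yields almost sure convergence along this subsequence. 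Finally, since $B$ is star-shaped with respect to the origin, $\eps\mapsto\frac{1}{\eps}B$ is monotone, so for $\eps_{n+1}\le\eps\le\eps_n$ one can sandwich $\eps^d N^\eps(B)$ between $\eps_n^d N^{\eps_n}(B)$ and $\eps_{n+1}^d N^{\eps_{n+1}}(B)$ up to a ratio $(\eps_n/\eps_{n+1})^d=2^d$, and a small-$\eta$ refinement (comparing with $(1\pm\eta)B$) upgrades almost sure convergence to the full continuous limit. The same scheme works for the weighted sum.

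For the thinned process $\Phi_\delta$ defined in \eqref{thinned.process}, stationarity is inherited since the thinning map commutes with translations, and the mixing bound \eqref{mixing} transfers to $\Phi_\delta$ because the event $\{z_i\in\Phi_\delta\}$ is measurable with respect to $\Phi$ restricted to a $\delta$-neighborhood of $z_i$: passing from a bounded Borel set $A$ to its $\delta$-enlargement $A_\delta$ only reduces the admissible range of $x$ in \eqref{mixing} by $2\delta$, which is absorbed in the constants. Hence the same variance argument proves the analogues of \eqref{average.number} and \eqref{SLLN.convergence.pp} for $\Phi_\delta$. For \eqref{convergence.average}, I would observe that $N_\delta(A)$ is monotone in $\delta$ (larger $\delta$ means stricter separation, hence fewer points), dominated by $N(A)$, and that $N_\delta(A)\uparrow N(A)$ pointwise almost surely since $\Phi$ has finitely many points in $A$ with positive pairwise distance by simplicity; monotone convergence then gives \eqref{convergence.average}.

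The main obstacle I expect is the variance bound in the presence of unbounded marks: combining the spatial decay provided by \eqref{mixing} with the mark-correlation control \eqref{correlation.radii.2} must be done carefully so that the $(1+\rho^p)^{-1}$ factor in $g$, together with the density $h$ via the moment hypothesis \eqref{correlation.radii.1}, produces a summable spatial tail uniformly over cubes. Once this is in place, the rest is standard Chebyshev plus Borel--Cantelli plus the monotonicity provided by the star-shapedness of $B$.
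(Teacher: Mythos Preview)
Your overall plan---reduce to cube--indexed sums, bound covariances via the mixing assumption, apply Chebyshev and Borel--Cantelli along a geometric subsequence, and use the monotonicity from star-shapedness to pass to the full limit---is exactly the route the paper takes (the paper packages the last steps into an auxiliary Lemma~\ref{SLLN.correlated}). The treatment of the thinned process $\Phi_\delta$ is also the same: you correctly observe that $N_\delta(Q_i)$ is measurable with respect to a $\delta$-enlargement of $Q_i$, so the mixing bound \eqref{mixing} transfers; and \eqref{convergence.average} is monotone/dominated convergence.

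There is, however, a genuine gap in your variance argument for the weighted sum \eqref{SLLN.convergence.pp}. Under the hypotheses of the lemma you only have $\langle X\rangle<\infty$ via \eqref{correlation.radii.1}; no bound on $\langle X^2\rangle$ is assumed, and in the application $X=\rho^{d-2}$ with merely $\langle\rho^{d-2}\rangle<\infty$, so the diagonal term $\mathrm{Var}(Z_k)$ with $Z_k=\sum_{z_i\in\Phi(Q_k)}X_i$ can be infinite. Your claim that the decay of $g$ in \eqref{correlation.radii.2} ``ensures finiteness of the second moment of the $X_i$'' is not correct: $g$ governs only the two-point correlation for \emph{distinct} marks, while the variance of a single mark is determined by $h$ alone. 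The paper sidesteps this by feeding the off-diagonal covariance decay \eqref{SLLN.mixing.standard} into Lemma~\ref{SLLN.correlated}, whose proof performs a classical truncation $Y_i=X_i\1_{\{X_i\le i\}}$ (in the style of Etemadi's SLLN) before bounding variances. You need the same truncation step. A minor second point: along $\eps_n=2^{-n}$ the sandwich you describe only traps the limit between $2^{-d}L$ and $2^{d}L$; the clean fix, used in the paper, is to take $\eps_k=\alpha^k$ with $\alpha\in(0,1)$ arbitrary and send $\alpha\to1^-$ at the end.
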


  \begin{lem}\label{l.LLN.index.set}
	In the same setting of Lemma \ref{SLLN.general.pp}, let $\{ I_\eps \}_{\eps > 0}$ be a family of collections of points such that $I_\eps \subset \Phi^\eps( B)$ and
       \begin{align}\label{small.index.set}
       \lim_{\eps \downarrow 0^+} \eps^d\# I_\eps = 0 \ \ \ \text{almost surely.}
       \end{align}
       Then,
	\begin{align}
		\lim_{\eps \downarrow 0^+}{ \eps^d} \sum_{z_i \in I_\eps} X_i \to 0 \quad \text{almost surely.}
	\end{align}
\end{lem}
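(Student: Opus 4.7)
The strategy is a truncation argument that reduces everything to Lemma \ref{SLLN.general.pp}. For any fixed $M > 0$, split each mark as $X_i = X_i \mathbf{1}_{X_i \le M} + X_i \mathbf{1}_{X_i > M}$ and write
\begin{align}
\eps^d \sum_{z_i \in I_\eps} X_i \le M \, \eps^d \#(I_\eps) + \eps^d \sum_{z_i \in \Phi^\eps(B)} X_i \mathbf{1}_{X_i > M},
\end{align}
where in the second term I enlarged the index set from $I_\eps$ to all of $\Phi^\eps(B)$, which is legal since the summand is nonnegative.

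The first term vanishes almost surely as $\eps \downarrow 0^+$ by hypothesis \eqref{small.index.set}. For the second term, the key observation is that the marked process $(\Phi, \{X_i \mathbf{1}_{X_i > M}\}_{z_i \in \Phi})$ still satisfies the assumptions required by Lemma \ref{SLLN.general.pp}: its one-point correlation function is $h_M(x) := h(x) \mathbf{1}_{x > M}$, which has finite first moment $\int x h_M(x) \, dx = \int_M^\infty x h(x) \, dx < \infty$ thanks to \eqref{correlation.radii.1}, and its two-point correlation differs from a product only by a term bounded by the same $g$ from \eqref{correlation.radii.2}. Consequently Lemma \ref{SLLN.general.pp} yields, almost surely,
\begin{align}
\lim_{\eps \downarrow 0^+} \eps^d \sum_{z_i \in \Phi^\eps(B)} X_i \mathbf{1}_{X_i > M} = \langle N(Q) \rangle \, |B| \int_M^\infty x h(x) \, dx.
\end{align}

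To handle the $\omega$-dependence, I would fix a countable sequence $M_k \uparrow \infty$ and take the intersection of the full-probability sets on which \eqref{small.index.set} holds and on which the truncated SLLN above holds for each $M_k$; this intersection still has full probability. On it we obtain
\begin{align}
\limsup_{\eps \downarrow 0^+} \eps^d \sum_{z_i \in I_\eps} X_i \le \langle N(Q) \rangle \, |B| \int_{M_k}^\infty x h(x) \, dx,
\end{align}
and sending $k \to \infty$ the right-hand side vanishes by dominated convergence applied to \eqref{correlation.radii.1}. Since the summand is nonnegative, the full limit is zero.

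There is no real obstacle here: the truncation is harmless because it preserves the short-range correlation structure of the marks, and the finite first-moment condition \eqref{correlation.radii.1} supplies exactly the tail decay needed to close the $M \to \infty$ limit. The only point requiring a little care is that $I_\eps$ is a random (and possibly complicated) subset of $\Phi^\eps(B)$, but we never need to control its structure beyond the cardinality bound \eqref{small.index.set}, since nonnegativity allows us to pass to the larger sum over $\Phi^\eps(B)$ for the tail piece.
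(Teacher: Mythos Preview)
Your argument is correct and is essentially identical to the paper's: both truncate at level $M$, bound the small part by $M\,\eps^d\#(I_\eps)$, enlarge the tail part to $\Phi^\eps(B)$ and apply Lemma \ref{SLLN.general.pp} to the truncated marks, then send $M\to\infty$. Your version is even slightly more explicit about taking a countable sequence $M_k$ and intersecting the full-probability sets.
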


\bigskip
  \begin{lem}\label{l.w.independent}
In the same setting of Lemma \ref{SLLN.general.pp}, let us assume that in addition the marks satisfy $\langle X^2 \rangle < +\infty$. 
For $z_i \in \Phi$ and $\eps >0$, let $r_{i,\eps} > 0$, and assume there exists a constant $C>0$  such that for all $z_i \in \Phi$ and $\eps >0$
\begin{align}
	r_{i,\eps} \leq C \eps.
\end{align}
Then, almost surely, we have
\begin{align*}
\lim_{\eps \downarrow 0^+ }\sum_{z_i \in \Phi^\eps(B)} X_i \frac{\eps^d}{r_{i,\eps}^d}\int_{B_{r_{i,\eps}}(\eps z_i)} \zeta(x) \, dx = \frac{\sigma_d}{d} \langle N(Q) \rangle \langle \, X \, \rangle \int_{B} \zeta(x) \, dx,
\end{align*}
for every $\zeta \in C_0^{1}(B)$. 
 \end{lem}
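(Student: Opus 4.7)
The plan is a two-step reduction. First, I would use the smoothness of $\zeta$ to replace the ball average $r_{i,\eps}^{-d}\int_{B_{r_{i,\eps}}(\eps z_i)}\zeta$ by the pointwise value $\tfrac{\sigma_d}{d}\zeta(\eps z_i)$ in each summand, at a cost that vanishes by \eqref{SLLN.convergence.pp}. What remains is a Riemann-sum statement that I would handle by approximating $\zeta$ uniformly by a step function on a grid of small cubes and applying \eqref{SLLN.convergence.pp} to each cube separately.

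For the first step, since $|B_r|=\tfrac{\sigma_d}{d}r^d$ and $\zeta\in C^1_0(B)$ extends (by zero) to a globally $\|\nabla\zeta\|_\infty$-Lipschitz function on $\R^d$, the bound $r_{i,\eps}\le C\eps$ gives
\[
\Ll|\frac{\eps^d}{r_{i,\eps}^d}\int_{B_{r_{i,\eps}}(\eps z_i)}\zeta(x)\,dx-\frac{\sigma_d}{d}\,\eps^d\,\zeta(\eps z_i)\Rr|\le C\,\eps^{d+1}\|\nabla\zeta\|_\infty.
\]
Multiplying by $X_i$ and summing over $z_i\in\Phi^\eps(B)$, the total error is dominated by $C\eps\|\nabla\zeta\|_\infty\,\eps^d\sum_{z_i\in\Phi^\eps(B)}X_i$, which tends to $0$ almost surely thanks to \eqref{SLLN.convergence.pp}. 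Thus the claim reduces to proving
\[
\eps^d\sum_{z_i\in\Phi^\eps(B)}X_i\,\zeta(\eps z_i)\;\longrightarrow\;\langle N(Q)\rangle\,\langle X\rangle\int_B\zeta(x)\,dx\quad\text{a.s.}
\]

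For the second step, for each dyadic scale $\delta=2^{-n}$ I would tile $\R^d$ by cubes $\{Q^\delta_k\}_k$ of side $\delta$. Since $\zeta$ is compactly supported strictly inside $B$, for $\delta$ small enough the set $I_\delta:=\{k:Q^\delta_k\cap\operatorname{supp}\zeta\neq\emptyset\}$ is finite and $\bigcup_{k\in I_\delta}Q^\delta_k\subset B$. Letting $y^\delta_k$ denote the centre of $Q^\delta_k$, set $\zeta_\delta:=\sum_{k\in I_\delta}\zeta(y^\delta_k)\1_{Q^\delta_k}$, so that $\|\zeta-\zeta_\delta\|_\infty\le C\delta\|\nabla\zeta\|_\infty$. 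Splitting,
\[
\eps^d\sum_i X_i\,\zeta_\delta(\eps z_i)=\sum_{k\in I_\delta}\zeta(y^\delta_k)\cdot\eps^d\sum_{z_i\in\Phi^\eps(Q^\delta_k)}X_i,
\]
and applying \eqref{SLLN.convergence.pp} to each of the finitely many cubes $Q^\delta_k$, the right-hand side converges almost surely to $\langle N(Q)\rangle\langle X\rangle\sum_{k\in I_\delta}\zeta(y^\delta_k)\delta^d$, which is the Riemann sum for $\int_B\zeta$ at scale $\delta$. The discrepancy between $\zeta$ and $\zeta_\delta$ in the full sum is $O(\delta)\cdot\eps^d\sum_i X_i$, again controlled by \eqref{SLLN.convergence.pp}. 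Sending first $\eps\downarrow 0^+$ and then $\delta\downarrow 0^+$ closes the argument.

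The main technical obstacle is that Lemma \ref{SLLN.general.pp} is phrased for bounded sets star-shaped with respect to the origin, whereas the cubes $Q^\delta_k$ are generically translated away from it. I would resolve this using stationarity of $\Phi$: the law of $\#(\Phi\cap A)$ is translation-invariant, so \eqref{SLLN.convergence.pp} transfers to any translate of an origin-centred cube, and, by finite additivity/inclusion-exclusion, to finite unions of such translates. A second, routine point is to produce a single almost-sure event on which the convergence holds simultaneously for every scale: this is ensured by restricting to $\delta\in\{2^{-n}\}_{n\in\N}$ and intersecting the countably many full-probability events from \eqref{SLLN.convergence.pp} applied to $B$ and to each cube of each dyadic grid.
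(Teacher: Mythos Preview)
Your two reductions are sound, and the overall strategy differs from the paper's. The paper keeps the ball averages $a_{i,\eps}=\int_{B_\eps(\eps z_i)}\zeta$, centres the marks by writing $X_i=(X_i-\langle X\rangle)+\langle X\rangle$, and then controls the fluctuating part $\sum (X_i-\langle X\rangle)a_{i,\eps}$ by a direct second--moment bound (using $\langle X^2\rangle<\infty$ and the mixing assumption), Chebyshev, and Borel--Cantelli along $\eps_n=1/n$, finally interpolating to the continuum by a monotonicity argument on the weights. Your route is more elementary: once you pass to point evaluations you only invoke Lemma~\ref{SLLN.general.pp} on finitely many small sets and on $B$, so in fact you never use the extra hypothesis $\langle X^2\rangle<\infty$. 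That is a genuine gain.

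There is, however, a real gap in how you handle the star--shapedness obstruction. Stationarity does \emph{not} transfer \eqref{SLLN.convergence.pp} to a translated cube $Q^\delta_k$ in the way you suggest. The almost--sure event in \eqref{SLLN.convergence.pp} concerns the sets $\tfrac{1}{\eps}Q^\delta_k$, whose centres $y^\delta_k/\eps$ run off to infinity as $\eps\downarrow 0$; a single spatial shift $\tau_a$ sends the convergence event for the origin--centred cube to the event for the family $\tfrac{1}{\eps}Q^\delta_0-a$ (fixed centre $-a$, growing side), which is asymptotically the \emph{same} event, not the one for $Q^\delta_k$. In short, translation does not commute with the dilation $\tfrac{1}{\eps}(\cdot)$, so equality in law of $\Phi$ and $\tau_a\Phi$ gives you nothing here.

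The fix is the inclusion--exclusion you mention, but applied differently: write each dyadic cube $Q^\delta_k=\prod_j[a_j,b_j]$ as a signed combination of $2^d$ axis boxes with one vertex at the origin, e.g.\ in the positive orthant
\[
\1_{Q^\delta_k}=\sum_{S\subset\{1,\dots,d\}}(-1)^{|S|}\,\1_{R_S},\qquad R_S=\prod_{j\in S}[0,a_j]\times\prod_{j\notin S}[0,b_j].
\]
Each $R_S$ is convex with the origin on its boundary, hence star--shaped with respect to the origin, so Lemma~\ref{SLLN.general.pp} applies to it directly; summing gives \eqref{SLLN.convergence.pp} for $Q^\delta_k$. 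Since the dyadic grids produce only countably many such boxes, the single almost--sure event you need is the intersection of the corresponding full--probability sets together with the one for $B$. With this correction your argument goes through.
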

 
 \bigskip

 \begin{proof}[Proof of Lemma \ref{SLLN.general.pp}]
 Without loss of generality we assume that $B= Q^R$, with $Q^R$ the cube of size $R$ centred at the origin and ${Q^{ \frac R \eps}=\frac 1 \eps}B$. Moreover, we denote by $\{ Q_i \}_{i\in \Zd}$ the partition of $\Rd$ made of (essentially) disjoint unit cubes centred in the points of the lattice $\Zd= \{x_i \}_{i\in \N}$.
 
 \medskip
 
The limit \eqref{SLLN.convergence.pp} is an easy consequence of a Strong Law of Large Numbers for correlated random variables (see Lemma \ref{SLLN.correlated} in the Appendix): For all $\mu > 0$ and all $\eps$ small enough
\begin{align}\label{limsup}
\eps^{d}\sum_{ z_i \in \Phi^\eps( Q^{R})} X_i \leq \eps^{d}\sum_{x_j \in \Zd \cap Q^{\frac {R+\mu} \eps}} Z_i,
\end{align}
where  $Z_j:= \sum_{ z_i  \in \Phi( Q_j)} X_i$ are identically distributed random variables by stationarity of $(\Phi, \X)$.  Moreover, they have finite average 
\begin{align}\label{average.Z}
\Bigl\langle  \sum_{ z_i  \in \Phi( Q)} X_i \Bigr\rangle = \langle N(Q) \rangle \langle X \rangle < +\infty
\end{align}
and satisfy for every $i, j \in \N$ with $i \neq j$
\begin{align}\label{correlation.sum.1}
|\langle Z_i Z_j \rangle - \langle Z \rangle^2| &\stackrel{\eqref{average.Z}}{=} \biggl|\biggl\langle \sum_{ z_k  \in \Phi( Q_i) \atop z_l \in \Phi(Q_j)} X_k X_l  \biggr\rangle  - \langle N(Q) \rangle^2 \langle X \rangle^2\biggr| \\&\stackrel{\eqref{correlation.radii.2}}{\leq}| \langle X \rangle^2 \langle  N(Q_i) N(Q_j) \rangle -  \langle N(Q) \rangle^2 \langle X \rangle^2| + \frac{C}{|x_i - x_j|^\gamma}\langle  N(Q_i) N(Q_j) \rangle,
\end{align}
where the constant $C$ depends on the constants in \eqref{correlation.radii.2}.
We now appeal to condition \eqref{mixing}: By the stationarity assumption on $\Phi$, we have that for any $i,j \in \N$
\begin{align}
\langle N(Q_i ) N(Q_j) \rangle = \langle N( Q_{i-j}) N (Q) \rangle,
\end{align}
so that \eqref{mixing} applied to the random variables $N( Q_{i-j})$ and $N (Q)$ yields
\begin{align}\label{mixing.numbers}
|\langle N(Q_i) N( Q_j) \rangle - \langle N(Q) \rangle^2 | \lesssim \frac{\langle N(Q )^2 \rangle}{| x_i - x_j|^\gamma}.
\end{align}
We thus insert this bound into \eqref{correlation.sum.1} and get
\begin{align}
|\langle Z_i Z_j \rangle - \langle Z \rangle^2| {\leq} \frac{C \langle N(Q )^2 \rangle }{|x_i - x_j|^\gamma}.
\end{align} 
Hence, condition \eqref{SLLN.mixing.standard} is satisfied with constant $C \langle N(Q )^2 \rangle  < +\infty$, where $C$ depends on the constants in \eqref{mixing} and \eqref{correlation.radii.0}. 
We apply Lemma \ref{SLLN.correlated} to the sequence $\{Z_i \}_{i\in \Zd}$ in  \eqref{limsup} and conclude 
\begin{align}
	\limsup_{\eps \downarrow 0^+} \eps^{d}\sum_{ z_i \in \Phi^\eps( Q^ R )} X_i \leq \langle N(Q) \rangle \langle X \rangle |Q^{R+\mu}|.
\end{align}
Arguing analogously for the limit inferior and taking the limit $\mu \to 0$ yields \eqref{SLLN.convergence.pp}. 
Limit \eqref{average.number} follows exactly as \eqref{SLLN.convergence.pp} by substituting the marks  $X_i$ with $1$.

\medskip

 For $\delta > 0$ be fixed. We show that $\Phi_\delta$ satisfies the analogues of \eqref{average.number} and \eqref{SLLN.convergence.pp} together with \eqref{convergence.average}. Since by definition \eqref{thinned.process} we have that $N^\delta(B) \leq N(B)$, the limit in \eqref{convergence.average}  follows from the Dominated Convergence Theorem. 
To show \eqref{SLLN.convergence.pp} and \eqref{average.number} we may argue exactly as above for the original process $\Phi$ and apply Lemma \ref{SLLN.correlated} to the random variables $Z^\delta_i := \sum_{ z_i  \in \Phi^\delta( Q_j)} X_i$. Since for each $x_i\in \Zd$ we have $0\leq Z^\delta_i \leq Z_i$, the only condition that remains to be shown for  the collection $\{ Z^\delta_i\}_{x_i\in \Zd}$ is \eqref{SLLN.mixing.standard}. By arguing as in \eqref{correlation.sum.1}, we use again \eqref{correlation.radii.2} to reduce ourselves to show \eqref{SLLN.mixing.standard} for the random variables $\{N^\delta(Q_i)\}_{i \in \Zd}$. To do so, for any $x\in \Rd$ we define
	\begin{align}\label{minimal.distance.def}
	d_x:=  \min_{y \in \Phi(\omega), \atop y \neq x} | x- y|,
	\end{align}
	so that
	$$
	N^\delta( Q) = \sum_{z_i \in \Phi \cap Q} \1_{d_x > \delta}(z_i), \ \ \ N^\delta(Q_i)=  \sum_{z_i \in \tau_{-x_i}\Phi \cap Q} \1_{d_x > \delta}(z_i).
	$$
	Since $\1_{d_x  > \delta} = \1_{N( B_\delta(x) \backslash \{x \} = 0)}$, each $N^\delta(Q_i)$ are measurable random variables with respect to $\mathcal{F}( B_\delta(Q_i))$ defined in \eqref{mixing}, with 
	$$
	B_\delta(Q_i):= \{ x \in \Rd \ \colon \ \operatorname{dist}(x, Q_i) \leq \delta \}.
	$$ 
We thus apply \eqref{mixing} as above and conclude that, with a constant depending on $\delta$, condition  \eqref{SLLN.mixing.standard} is satisfied by the sequence $N^\delta(Q_i)$. This yields \eqref{SLLN.convergence} for $\Phi^\delta$ and, by the same argument, also \eqref{average.number}. 
 \end{proof}

 \begin{proof}[Proof of Lemma \ref{l.LLN.index.set}]
	Let $M \in \N$. We define for every $z_i\in \Phi$ the truncated marks \mbox{$Y_i := X_i\1_{[M,\infty)}(X_i)$} which satisfy assumption \eqref{correlation.radii.0} and \eqref{correlation.radii.2} thanks to the corresponding assumptions for the original marks $\{X_i\}_{i\in \N}$. Since
	$$
	\langle Y_i \rangle \leq \langle X \rangle < +\infty,
	$$
	 we apply Lemma \ref{SLLN.general.pp} to the point process $\Phi$ with the truncated marks $\{ Y_i\}_{z_i\in \Phi}$ to infer that almost surely
	 $$
	 \eps^d \sum_{z_i \in \Phi^\eps(B)} Y_i \to \langle X \1_{[M,\infty)}(X) \rangle.
	 $$
	 This yields
	\begin{align}
		\limsup_{\eps \downarrow 0^+} \eps^d \sum_{z_i \in I_\eps} X_i 
		&\leq \limsup_{\eps \downarrow 0^+}  \eps^d \sum_{z_i \in I_\eps} X_i \1_{[0,M)}(X_i) +  \langle X \1_{[M,\infty)}(X) \rangle \\
		& \leq  M\limsup_{\eps \downarrow 0^+} \eps^d \# I_\eps + \langle X \1_{[M,\infty)}(X) \rangle \stackrel{\eqref{small.index.set}}{=} \langle X \1_{[M,\infty)}(X) \rangle.
	\end{align}
	Since $\langle X \rangle < +\infty$, we may take the limit $M \to \infty$ and conclude the proof.
\end{proof}

\begin{proof}[Proof of Lemma \ref{l.w.independent}]
First, we argue that it suffices to prove the case $r_{i,\eps} = \eps$ for all $z_i \in \Phi$ and $\eps >0$.
Indeed, for $\zeta \in C^{1}_0(B)$ we use a change of coordinates to get almost surely,
\begin{align}
	\limsup_{\eps \downarrow 0^+} \sum_{z_i \in \Phi^\eps (B)} \biggl|\frac{\eps^d}{r_{i,\eps}^d}\int_{B_{r_{i,\eps}}(\eps z_i)} \zeta(x) \dd x - \int_{B_{\eps}(\eps z_i)} \zeta(x) \dd x\biggr| 
	\leq \limsup_{\eps \downarrow 0^+} C \eps \| \nabla \zeta\|_{L^\infty} \eps^d N^\eps(B) = 0
\end{align}
since $\eps^d N^\eps(B)$ is bounded by Lemma \ref{SLLN.general.pp}.
 
\medskip

Without loss of generality we therefore assume $r_{i,\eps} = \eps$ and $|B| = 1$. 

\medskip

Next we observe that it suffices to argue that the assertion holds for any fixed $\zeta \in W^{1,\infty}_0(B)$. Indeed, once we have shown this,
the statement follows because there exists a countable subset of $W^{1,\infty}_0(B)$ which is dense in $C^1_0(B)$.

\medskip

We fix $\zeta \in  W^{1,\infty}_0(B)$ and we begin by rewriting the term in the limit as 
\begin{align*}
\sum_{z_i \in \Phi^\eps (B)} X_i \int_{B_\eps(\eps z_i )} \zeta( x) \, dx& = \sum_{z_i \in \Phi^\eps (B)} (X_i - \langle X \rangle )\int_{B_\eps(\eps z_i )} \zeta( x) \, dx  \\
&\quad +  \langle X \rangle  \sum_{z_i \in \Phi^\eps (B)} \int_{B_\eps(\eps z_i )} \zeta( x) \, dx,
\end{align*} 
so that
\begin{align}\label{riemann.sum.2}
\biggl|&\sum_{z_i \in \Phi^\eps (B)} X_i \int_{B_\eps(\eps z_i )} \zeta( x) \, dx - \frac{\sigma_d}{d} \langle N(Q) \rangle \langle X\rangle \int_B \zeta \biggr|\\
&\quad \quad \leq \biggl|\sum_{z_i \in \Phi^\eps (B)} (X_i - \langle X \rangle )\int_{B_\eps(\eps z_i )} \zeta( x) \, dx\biggr|  +\langle X \rangle \biggl| \sum_{z_i \in \Phi^\eps (B)} \int_{B_\eps(\eps z_i )} \zeta( x) \, dx - \langle N(Q) \rangle \int_B \zeta \biggr|.\notag
\end{align} 
Let $\{ Q_i\}_{i\in \N}$ be a partition of $\Rd$ into (essentially) disjoint unitary cubes and let $\{ y_i \}_{i\in \N}$ be the collection of their centres. We claim that if
\begin{align*}
   T_\eps(\zeta) &:= \int_B \zeta ,   &&\tilde{T}_\eps(\zeta) := \eps^d \sum_{Q_i \cap \frac 1 \eps B \neq \emptyset} \zeta(\eps y_i),\\
   R_\eps(\zeta) &:= \sum_{z\in \Phi^\eps( B)} \int_{B_\eps( \eps z)} \zeta(x) \, dx,  &&\tilde{R}_\eps(\zeta) := \eps^d \frac{\sigma_d}{d} \sum_{Q_i \cap \frac 1 \eps B \neq \emptyset} N(Q_i)\zeta(\eps y_i),
\end{align*}
then
\begin{align}\label{riemann.sum.3}
&\lim_{\eps \downarrow 0^+} | T^\eps(\zeta) - \tilde T^\eps(\zeta) |= 0, \ \ \ \lim_{\eps \downarrow 0^+} | R^\eps(\zeta) - \tilde R^\eps(\zeta) | = 0 \ \ \ \ \ \text{almost surely.}
\end{align}
The first limit is a standard Riemann sum; for the second limit we argue in a similar way: Since $\zeta \in  W^{1,\infty}_0(B)$, we have that 
 \begin{align}
 		|R_\eps(\zeta) - \tilde{R}_\eps(\zeta)| &= \biggl|\sum_{Q_i \cap \frac 1 \eps B \neq \emptyset}\biggl( \sum_{z_j \in \Phi(Q_i)} \int_{B_\eps(\eps z_j)} \zeta - \eps^d \frac{\sigma_d}{d} N(Q) \zeta(\eps y_i) \biggr) \biggr|\\
		& = \biggl|\sum_{Q_i \cap \frac 1 \eps B \neq \emptyset}  \sum_{z_j \in \Phi(Q_i)} \int_{B_\eps(\eps z_j)} (\zeta(x) -  \zeta(\eps y_i) ) \biggr|\leq
		 2\| \nabla \zeta \|_{\infty} \, \eps^{d+1}  N^\eps(B).	
		 \end{align}
We now apply \eqref{average.number} of Lemma \ref{SLLN.general.pp} to infer \eqref{riemann.sum.3}. This, together with \eqref{riemann.sum.2} and the triangular inequality implies that almost surely
\begin{align}\label{riemann.sum.4}
&\limsup_{\eps \downarrow 0^+}\biggl|\sum_{z_i \in \Phi^\eps (B)} X_i \int_{B_\eps(\eps z_i )} \zeta( x) \, dx -\frac{\sigma_d}{d} \langle N(Q) \rangle \langle X\rangle \int_B \zeta \biggr|\notag \\
&\quad \quad \leq \limsup_{\eps \downarrow 0^+}\biggl|\sum_{z_i \in \Phi^\eps (B)} (X_i - \langle X \rangle )\int_{B_\eps(\eps z_i )} \zeta( x) \, dx\biggr|\\
&\quad \quad  + \limsup_{\eps \downarrow 0^+} \biggl| \eps^d \langle X \rangle \frac{\sigma_d}{d} \,  \sum_{Q_i \cap \frac 1 \eps B \neq \emptyset} \zeta(\eps y_i)  \bigl(N(Q_i)- \langle N(Q) \rangle \bigr) \biggr|.\notag
\end{align} 
It remains to show that also the previous two terms on the right-hand side above vanish almost surely. Since to do this we follow an argument very similar to the one of Lemma \ref{SLLN.correlated}, we only give the details of the parts in which the proof differs.  For $\eps >0$ let
$$
a_{i,\eps}:=  \int_{B_\eps(\eps z_i)} \zeta(x) \, dx , \ \ \ \ \ \tilde X_i := X_i - \langle X_i \rangle
$$
and
$$
S_\eps:= \sum_{z_i \in \Phi^\eps(B)} a_{i,\eps} X_i, \ \ \ \ \tilde S_\eps:= \sum_{z_i \in \Phi^\eps(B)} a_{i,\eps} \tilde X_i.
$$
 We start by proving that the the first term on the right-hand side of \eqref{riemann.sum.4}, i.e. $\tilde S_\eps$ above, vanishes in the limit; we may argue analogously that also second term on the right-hand side of \eqref{riemann.sum.4}  vanishes.

As in the proof of Lemma \ref{SLLN.correlated}, we may use Chebyshev's inequality to estimate for each $\delta >0$ 
 \begin{align}\label{Cheb}
 \P( \tilde S_{\eps} > \delta ) \leq \delta^{-2} \langle \tilde S_{\eps}^2 \rangle
 \end{align}
and rewrite 
 \begin{align*}
  \langle \tilde S_{\eps}^2 \rangle &= \biggl\langle \sum_{z_i, z_k \in \Phi( \frac 1 \eps B)} a_{i,\eps} a_{k,\eps} \tilde X_i \tilde X_k \biggr\rangle = \sum_{Q_j \cap \frac 1 \eps B \neq \emptyset \atop Q_i \cap \frac 1 \eps B \neq \emptyset}  \biggl\langle \biggr(\sum_{z_l \in \Phi( Q_j)} a_{j,\eps}  \tilde X_j  \biggr) \biggl( \sum_{z_k \in \Phi( Q_i )}  a_{k,\eps}\tilde X_k \biggr) \biggr\rangle.
  \end{align*}
If we now set $Y_i := \sum_{z_l \in \Phi( Q_j)} a_{j,\eps}  \tilde X_j $, since all $|a_{\eps,i}|\leq \|\zeta\|_{L^\infty}\eps^d$, we argue as for the random variables $\{ Z_i\}_{i\in \N}$ in the proof of Lemma \ref{SLLN.general.pp} and infer that 
 \begin{align*}
  \langle \tilde S_{\eps}^2 \rangle  &\leq \sum_{Q_i \cap \frac 1 \eps B \neq \emptyset}   \| \zeta \|_{\infty}^2 \eps^{2d} \langle N(Q)^2 \rangle   \mbox{Var}(X) +  C \eps^{2d} \|\zeta \|_{\infty}^2 \sum_{Q_j \cap \frac 1 \eps B \neq \emptyset \atop Q_i \cap Q_j = \emptyset } \frac{\langle N(Q)^2 \rangle \langle X \rangle^2}{|x_i - x_j|^{\gamma}}\\
  & \stackrel{\gamma > d}{\lesssim} \eps^{d} \|\zeta \|_{\infty}^2 \langle N(Q)^2 \rangle \langle X^2 \rangle.
  \end{align*}
Therefore, if we plug this into \eqref{Cheb} and apply Borel-Cantelli's lemma to the subsequence $\eps_n = \frac{1}{n}$ with $n\in \N$, we get that
\begin{align*}
\lim_{n \uparrow +\infty}  \tilde S_{\eps_n} =0 \ \ \ \text{almost surely.}
\end{align*}
We appeal to an estimate similar to this one also for the second term on the right hand side of \eqref{riemann.sum.4}  (this time using the assumption \eqref{mixing}) and conclude from \eqref{riemann.sum.4} that for the sequence $\{ \eps_n \}_{n\in \N}$ we have almost surely that
\begin{align}\label{limit.1}
\lim_{n \uparrow +\infty} \sum_{z_i \in \Phi (\frac 1 {\eps_n} B)} X_i \int_{B_{\eps_n}(\eps_n z_i  )} \zeta( x) \, dx =  \ \frac{\sigma_d}{d} \langle N( Q) \rangle \langle  X \rangle \int_{B} \zeta(x) \, dx. 
\end{align}

 \medskip
  
 To extend \eqref{limit.1} to any sequence $\eps_j \downarrow 0$ and for the same full-probability set, we argue again similarly to Lemma \ref{SLLN.correlated}. We first fix the following notation: 
 For $0 < \eps < 1$, we define 
 $$
 \underline \eps: = \Bigl( \lfloor \frac{1}{\eps} \rfloor + 1 \Bigr)^{-1} , \ \ \ \ \ \overline \eps :=  \Bigl( \lfloor \frac{1}{\eps} \rfloor \Bigr)^{-1}.
 $$
 Note that $\overline \eps^{-1}, \underline \eps^{-1} \in \N$ and $\underline \eps \leq \eps \leq \overline \eps $.
 By writing $\zeta = \zeta_+ + \zeta_-$ and using linearity, we observe that it suffices to consider non-negative functions $\zeta$ and thus reduce ourselves to the case $a_{i,\eps} \geq 0$.
 
 \medskip
 
For any $\eps_j \downarrow 0^+$ we may use the triangle inequality and the assumptions on the sign of the weights and the $X_i$'s to bound 
\begin{align}\label{l.w.1}
\ \ \ S_{\eps_j} \leq   S_{\underline\eps_j} + \sum_{i= 1}^{N^{\eps_j}(B)} |a_{i ,\eps_j} - a_{i, \underline \eps_j} | X_i  
\leq  S_{\underline \eps_j} + \max_{i=1, \cdots, N^{\eps_j}(B) } |a_{i ,\eps_j} - a_{i, \underline\eps_j} | \, \sum_{i= 1}^{N^{\underline\eps_j}(B)}X_i .
\end{align}
We now claim that the weights are uniformly continuous in the second index, uniformly in the first index: More precisely we have that almost surely
 \begin{align}\label{UC.weights}
\lim_{\eps\downarrow 0^+} \frac{\max_{i \leq \#N^{\overline\eps}(B)} |a_{i, \eps} - a_{i, \overline \eps}|}{\overline\eps^d} = \lim_{\eps\downarrow 0^+} \frac{\max_{i \leq \#N^{\eps}(B)} |a_{i, \eps} - a_{i, \underline \eps}|}{\underline\eps^d}=0.
 \end{align}
We first argue that, if this is true, the proof of the lemma is concluded:  From \eqref{l.w.1} we indeed obtain
\begin{align*}
\ S_{\eps_j}  \leq S_{\underline \eps_j}  + \frac{\max_{i=1, \cdots, N^{\eps_j}(B) } |a_{i ,\eps_j} - a_{i, \underline \eps_j} |}{\underline\eps_j^d} \, \underline \eps_j^d \sum_{i= 1}^{N^{\underline\eps_j}(B)}X_i.
\end{align*}
Limit \eqref{limit.1}, Lemma \ref{SLLN.general.pp} and the second limit in \eqref{UC.weights} yield
\begin{align*}
\limsup_{\eps_j \downarrow 0} S_{\eps_j} \leq \frac{\sigma_d}{d} \langle N( Q) \rangle \,  \langle X \rangle\, \int \zeta.
\end{align*}
We may argue similarly as in \eqref{l.w.1} for the bound from below and get that
\begin{align*}
\liminf_{\eps_j \downarrow 0} S_{\eps_j} \geq \frac{\sigma_d}{d}  \langle N(Q ) \rangle  \, \langle X \rangle \int \zeta .
\end{align*}
This yields the claim of Lemma \ref{l.w.independent}.

\medskip

It thus remains to establish \eqref{UC.weights}: Since $\zeta  W^{1,\infty}_0(B)$, for any choice of $z_i \in B$ and $\eps_1 \leq \eps_2$ we estimate
\begin{align*}
|a_{i, \eps_1} - a_{i, \eps_2}| = \int_{B_{\eps_1}( 0)} | \zeta( x + \eps_1 z_i) - \zeta( x+ \eps_2 z_i ) | \, dx + \int_{B_{\eps_2}(0) \backslash B_{\eps_1}(0)} \zeta( x + \eps z_i) \, dx \\
\leq \| \nabla \zeta \|_{\infty} |\eps_2- \eps_1|  |z_i | \eps_1^{d} + \| \zeta\|_{\infty} \Bigl( \Bigl(\frac{\eps_2}{\eps_1}\Bigr)^d - 1 \Bigr) \eps_1^d.
\end{align*} 
Since $N^{\eps_2} (B)\leq N^{\eps_1}(B)$ and thus $i \leq N^{\eps_2}(B)$, we have that $| z_i | \leq \eps_2^{-1}$ and
\begin{align*}
|a_{i, \eps_1} - a_{i, \eps_2}|  \leq \|\zeta \|_{W^{1,\infty}} \biggl( \Bigl(1- \frac{\eps_1}{\eps_2}\Bigr) + \Bigl( \Bigl(\frac{\eps_2}{\eps_1}\Bigr)^d - 1 \Bigr) \biggr)\eps_1^d.
\end{align*}
Therefore, for the choice $\eps_1=\eps$, $\eps_2= \overline \eps$ this yields
\begin{align*}
|a_{i, \eps} - a_{i, \overline \eps}|  \leq \|\zeta \|_{W^{1,\infty}} \biggl ( \eps + \Bigl(\frac{1}{1-\eps}\Bigr)^d -1 \biggr) \overline\eps^d .
\end{align*}
and hence also the first limit in \eqref{l.w.independent} by \eqref{average.number} of Lemma \ref{SLLN.general.pp}. The second limit may be argued in a similar way. 
\end{proof}

 \section{Appendix}\label{App}
\subsection*{Strong Law of Large Numbers for sums of random variables with correlations. \,} This result is an easy adaptation to our setting, and to our needs, of the standard argument for the Strong Law of Large Numbers.

 \begin{lem}\label{SLLN.correlated}
Let $\{x_i\}_{i \in \N} = \Zd$, and let $\{ X_{i} \}_{i \in \N}$ be identically distributed random variables with $X_i \geq 0$ and $\langle X \rangle <+\infty$. 
Let us assume that for every $i,j \in \N$ with $i \neq j$
\begin{align}\label{SLLN.mixing.standard}
|\langle X_i X_j \rangle - \langle X \rangle^2 | < \frac{C}{|x_i-x_j|^\gamma} \ \ \ \ \ \gamma > d.
\end{align}
Then for every bounded Borel set $B \subset \Rd$ which is star-shaped with respect to the origin, we have
\begin{align}\label{SLLN.convergence}
\lim_{\eps \downarrow 0^+} \eps^{d}\sum_{x_i \in \Zd \cap {\frac 1 \eps}B} X_i = \langle X \rangle |B| \ \ \ \ \text{almost surely.}
\end{align}
\end{lem}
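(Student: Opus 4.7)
The plan is to follow the classical Chebyshev--plus--Borel--Cantelli scheme, as already used in the proof of Lemma \ref{l.w.independent}: first prove almost sure convergence of $S_\eps := \eps^d \sum_{x_i \in \Zd \cap \frac{1}{\eps}B} X_i$ to $\langle X\rangle |B|$ along the subsequence $\eps_n := 1/n$, and then extend to arbitrary $\eps \downarrow 0^+$ by a sandwich argument. The extension step is available because $X_i \geq 0$ and $B$ is star-shaped with respect to the origin, so that for $\underline\eps := (\lfloor 1/\eps \rfloor + 1)^{-1}$ and $\overline\eps := \lfloor 1/\eps \rfloor^{-1}$ one has $\frac{1}{\overline\eps}B \subset \frac{1}{\eps}B \subset \frac{1}{\underline\eps}B$, and consequently a two-sided comparison
\[
\underline\eps^{d}\sum_{x_i \in \Zd \cap \frac{1}{\overline\eps}B} X_i \;\leq\; S_\eps \;\leq\; \overline\eps^{d}\sum_{x_i \in \Zd \cap \frac{1}{\underline\eps}B} X_i,
\]
which, together with $\underline\eps/\overline\eps \to 1$, pins $S_\eps$ between two subsequential limits, each equal to $\langle X\rangle |B|$.

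For the subsequential step, set $\Lambda_\eps := \Zd \cap \frac{1}{\eps}B$ and $N_\eps := \#\Lambda_\eps$; identical distribution yields $\langle S_\eps\rangle = \eps^d N_\eps \langle X\rangle \to \langle X\rangle|B|$ by a standard Riemann-sum computation on the star-shaped set $B$. The decisive estimate is
\[
\mathrm{Var}(S_\eps) \;=\; \eps^{2d}\sum_{i \in \Lambda_\eps}\mathrm{Var}(X_i) \;+\; \eps^{2d}\sum_{\substack{i, j \in \Lambda_\eps \\ i \neq j}}\mathrm{Cov}(X_i, X_j),
\]
where the off-diagonal sum is immediately handled by hypothesis \eqref{SLLN.mixing.standard}: since $\gamma > d$, the sum $\sum_{j \neq i} |x_i - x_j|^{-\gamma}$ is bounded uniformly in $i \in \Zd$, so the off-diagonal contribution is $O(\eps^{2d} N_\eps) = O(\eps^d)$. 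For $\eps_n = 1/n$ this is $O(n^{-d})$, which is summable, and Chebyshev combined with Borel--Cantelli then gives almost sure convergence along $\eps_n$.

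The main obstacle is the diagonal term $\eps^{2d} N_\eps \mathrm{Var}(X_i)$, which is \emph{not} directly controlled by the statement's hypotheses since no second moment on $X$ is assumed. I would circumvent it by a Kolmogorov-style truncation: decompose $X_i = X_i^M + (X_i - M)_+$ with $X_i^M := X_i \wedge M$, apply the Chebyshev argument to $\eps^d \sum_i X_i^M$ (whose diagonal variance is now bounded by $M^2$, and whose off-diagonal covariance inherits an analogous decay via Hoeffding's formula for covariances of monotone functions), and control the non-negative residual in $L^1$ through
\[
\bigl\langle \eps^d \sum_{i \in \Lambda_\eps} (X_i - M)_+ \bigr\rangle \;=\; \eps^d N_\eps \,\langle (X - M)_+\rangle \;\to\; |B|\,\langle (X - M)_+\rangle,
\]
which tends to $0$ as $M \to \infty$ by monotone convergence. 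A suitable diagonalization $M = M_n \to \infty$ along $\eps_n = 1/n$, combined with Markov's inequality for the residual, yields both almost sure convergence of the truncated part and almost sure negligibility of the residual, completing the argument.
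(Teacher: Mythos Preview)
Your overall architecture---truncate, apply Chebyshev plus Borel--Cantelli along a subsequence, then sandwich using star-shapedness and positivity---is exactly the paper's. The gap is in the truncation. A fixed-level cut $X_i^M = X_i \wedge M$ followed by a diagonalization $M = M_n$ cannot in general be made to work under a bare first-moment hypothesis: Chebyshev on the truncated sum along $\eps_n = 1/n$ needs $\sum_n M_n^2\, n^{-d} < \infty$, forcing $M_n$ to grow at most polynomially, while Markov on the residual together with Borel--Cantelli needs $\sum_n \langle (X - M_n)_+\rangle < \infty$. When the tail $\langle (X - M)_+\rangle$ decays as slowly as $1/\log M$---which is perfectly compatible with $\langle X\rangle < \infty$, take for instance $P(X>t)\sim (t\log^2 t)^{-1}$---this second condition forces $M_n$ to grow faster than any polynomial. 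The two requirements are then incompatible, and the ``suitable diagonalization'' you invoke does not exist.

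The paper's remedy is the classical index-dependent truncation $Y_i := X_i \1_{X_i \leq i}$. This replaces your two competing summability conditions by two that are \emph{both} consequences of $\langle X\rangle < \infty$: the residual is handled by $\sum_i P(X_i \neq Y_i)=\sum_i P(X>i) \leq \langle X\rangle$ and Borel--Cantelli (no Markov needed), and the diagonal variance by the Kolmogorov estimate $\sum_i \langle Y_i^2\rangle/i^2 \leq 4\langle X\rangle$. The paper then runs Chebyshev along a \emph{geometric} subsequence $\eps_k=\alpha^k$ (rather than $1/n$), which is what makes the interchanged diagonal sum converge; the off-diagonal is controlled by $\gamma>d$ exactly as you say, and the final sandwich carries a factor $\alpha^{\pm d}$ removed by letting $\alpha\to 1$. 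As an aside, your appeal to Hoeffding's identity to transfer the covariance decay to the truncated variables is not automatic without a sign condition on the Hoeffding integrand; the paper makes the analogous assertion for its $Y_i$ equally briskly, so this particular subtlety is shared rather than specific to your approach.
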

\begin{proof} 
The proof of this lemma is an easy adaptation of the standard argument for independent and identically distributed random variables: In particular, we adapt to the case of correlated variables the argument
of \cite[Subsection 2.4]{Durrett.probability.book}. 

Without loss of generality, we assume that $B= Q$, where $Q$ is the unitary cube centred at the origin and $Q^{\frac 1 \eps}= \frac 1 \eps Q$. 
Moreover, we may assume that $|x_i|$ is monotone in $i \in \N$. Thus, there exists a constant $c = c(d)$ such that for all $i \in \N$
\begin{align}
	\label{x_i.monotone}
	|x_i| \geq c i^{\frac 1 d}. 
\end{align}

\medskip

The first step is to reduce the study of \eqref{SLLN.convergence} to the sum of the truncated random variables $Y_i := X_i \1_{X_i \leq i}$.
Indeed, 
\begin{align}
	\sum_{i=1}^\infty \P(X_i > i) \leq \int_0^\infty \P(X > t) \dd t = \langle X \rangle < \infty.
\end{align}
Thus, by Borel-Cantelli theorem applied to the events $E_i:= \{ X_i > i \}$ we have that almost surely in \eqref{SLLN.convergence} we may substitute the variables $X_i$ with their truncated versions $Y_i$. Clearly, also the sequence $\{ Y_i\}_{i\in \N}$ satisfies \eqref{SLLN.mixing.standard}.

\medskip

We define $\tilde Y_i := Y_i - \langle Y_i \rangle$. The next step of the proof is the following estimate:
\begin{align}
	\label{sum.variances}
	\sum_{i=1}^\infty \frac{\langle \tilde Y_i^2 \rangle}{i^2} \leq 4 \langle X \rangle < \infty.
\end{align}
We estimate
\begin{align}
	\langle \tilde Y_i^2 \rangle \leq  \langle Y_i^2 \rangle  \int_0^\infty 2 y \P ( Y_i > y) \dd y \leq \int_0^i 2 y \P ( X > y) \dd y.
\end{align}
Using the monotone convergence theorem, this yields
\begin{align}
	\sum_{i=1}^\infty \frac{\langle \tilde Y_i^2 \rangle}{i^2} \leq \sum_{i=1}^\infty \frac{1}{i^2} \int_0^\infty \1_{(0,i)}(y) 2 y \P ( X > y) \dd y
	\leq  \int_0^\infty \sum_{i > y}^\infty \frac{1}{i^2} 2 y \P ( X > y) \dd y.
\end{align}
Since $\int_0^\infty \P ( X > y) \dd y = \langle X \rangle$, to prove \eqref{sum.variances} it suffices to show
\begin{align}
	y \sum_{i > y} \frac{1}{i^2} \leq 2.
\end{align}
If $y \geq 1$, then
\begin{align}
	y \sum_{i > y} \frac{1}{i^2}   = y \sum_{i= \lfloor y \rfloor+ 1}^\infty \frac{1}{i^2} \leq y \int_{ \lfloor y \rfloor}^\infty \frac{1}{t^2} \dd t = \frac{y}{ \lfloor y \rfloor} \leq 2.
\end{align}
If $ 0 < y < 1$,
\begin{align}
	y \sum_{i > y} \frac{1}{i^2}   \leq 1 + \sum_{i=2}^\infty \frac{1}{i^2} \leq 1 +  \int_{1}^\infty \frac{1}{t^2} \dd t = 2.
\end{align}
This concludes the proof of \eqref{sum.variances}.

\medskip

Next, we define 
$$
S_\eps:= \sum_{i \in \Zd \cap Q^{\frac 1 \eps}} Y_i, \ \ \ \ \tilde S_\eps:= \sum_{i \in \Zd \cap Q^{\frac 1 \eps}} \tilde Y_i.
$$
Then, for every $\delta >0$, we estimate by Chebyshev's inequality
\begin{align*}
\P(\eps^d \tilde S_\eps > \delta ) \leq  \eps^{2d}\frac{\langle \tilde S_\eps^{2} \rangle}{\delta^2} = \delta^{-2} \eps^{2d} \langle \sum_{j,i \in \Zd \cap Q^{\frac 1 \eps}} \tilde Y_i \tilde Y_j \rangle .
\end{align*}
By definition of $Y_i$ and assumption \eqref{SLLN.mixing.standard}  the last term is bounded by
\begin{align*}
\P(\eps^d \tilde S_\eps > \delta ) \leq \delta^{-2} \eps^{2d}\sum_{i \in \Zd \cap Q^{\frac 1 \eps}} \langle \tilde Y_i^2 \rangle + \delta^{-2} \eps^{2d} \sum_{j, i \in \Zd \cap Q^{\frac 1 \eps} \atop i \neq j}  \frac{C}{ |i - j|^\gamma}.
\end{align*}
We now restrict ourselves to consider the sequence $\eps_k:=\alpha^k$, $k\in \N$ and $\alpha \in (0, 1)$ and use the previous inequality to estimate
\begin{align}
	\label{sum.chebyshev}
\sum_{k=1}^{+\infty} \P(\eps_k^d \tilde S_{\eps_k} > \delta ) \leq \delta^{-2} \sum_{k=1}^{+\infty} \eps_k^{2d} \sum_{i \in \Zd \cap Q^{\frac{ 1 }{\eps_k}}} \langle \tilde Y_i^2 \rangle + \delta^{-2}\sum_{k=1}^{+\infty} \eps_k^{2d} \sum_{j, i \in \Zd \cap Q^{\frac{ 1}{ \eps_k}} \atop i \neq j}  \frac{C}{ |z_i - z_j|^\gamma}.
\end{align}
For the second term on the right-hand side above, thanks to assumption $\gamma > d$, we have
\begin{align}
	\label{sum.chebyshev.second}
\sum_{k=1}^{+\infty} \eps_k^{2d} \sum_{j\neq i \in \Zd \cap Q^{\frac 1 {\eps_k}} }  \frac{C}{ |z_i - z_j|^\gamma} \leq \sum_{k=1}^{+\infty} \eps_k^d < +\infty.
\end{align}
To estimate the first term on the right-hand side in \eqref{sum.chebyshev}, we can interchange the order of the sums since all terms are nonnegative.
Thus,
\begin{align*}
\sum_{k=1}^{+\infty} \eps_k^{2d} \sum_{i \in \Zd \cap Q^{\frac 1 {\eps_k}}} \langle \tilde Y_i^2 \rangle 
&= \sum_{i \in \Zd } \langle \tilde Y_i^2 \rangle \sum_{k=1}^{+\infty} \eps_k^{2d} \1_{x_i \in Q^{\frac 1 {\eps_k}}}
\stackrel{\eqref{x_i.monotone}}{\leq} \sum_{i \in \Zd } \langle \tilde Y_i^2 \rangle \sum_{k \colon \eps_k^{-d} \leq C i} \eps_k^{2d} \\
& \lesssim \sum_{i \in \Zd } \langle \tilde Y_i^2 \rangle \frac{1}{i^{2d}} \frac{1}{1-\alpha^{2d}} 
\stackrel{\eqref{sum.variances}}{\lesssim} \langle X \rangle < \infty
\end{align*}
Therefore, for every $\delta >0$  we have that $\sum_{k=1}^{+\infty} \P(\eps_k^d \tilde S_{\eps_k} > \delta ) < +\infty$ and by Borel-Cantelli's lemma and the Dominated Convergence theorem we get
\begin{align*}
\tilde S_{\eps_k} \to 0 \ \ \ \ \text{almost surely.}
\end{align*}
Since $\lim_{i \to \infty} \langle Y_i \rangle = \langle X \rangle$, this implies also
\begin{align}
\label{SLLN.subsequence}
S_{\eps_k} \to \langle X \rangle \ \ \ \ \text{almost surely.}
\end{align}

\medskip

To pass to the continuum limit $\eps \downarrow 0^+$ for the same full-probability set, we argue as in \cite{Durrett.probability.book} by monotonicity.
Indeed, for $\eps_{k+1} \leq \eps \leq \eps_k$, we have 
\begin{align*}
	 \Zd \cap Q^{\frac 1 {\eps_k}}
	\subset  \Zd \cap Q^{\frac 1 \eps}
\subset  \Zd \cap Q^{\frac {1} {\eps_{k+1}}}
\end{align*}
Hence, since $Y_i \geq 0$, it holds
\begin{align}
	\label{monotonicity}
	  \frac{\#\Bigl(Q^{\frac {1} {\eps_{k+1}}}\Bigr)}{\#\Bigl(Q^{\frac 1 {\eps_{k}}}\Bigr)} S_{\eps_{k}} \leq S_{\eps} \leq 
	\frac{\#\Bigl(Q^{\frac {1} {\eps_{k+1}}}\Bigr)}{\#\Bigl(Q^{\frac 1 {\eps_{k}}}\Bigr)} S_{\eps_{k+1}} .
\end{align}
By the choice $\eps_k=\alpha^k$, we obtain 
\begin{align*}
  \frac{\#\Bigl(Q^{\frac {1} {\eps_{k+1}}}\Bigr)}{\#\Bigl(Q^{\frac 1 {\eps_{k}}}\Bigr)}
 \to \alpha^{-d}.
\end{align*}
We now combine \eqref{SLLN.subsequence} and \eqref{monotonicity} to infer
\begin{align*}
 \liminf_{\eps \to 0} S_\eps \geq \alpha^d \langle X \rangle, 
 \qquad \limsup_{\eps \to 0} S_\eps \leq \alpha^{-d} \langle X \rangle.
\end{align*}
Since $\alpha \in (0,1)$ is arbitrary, we conclude the proof by sending $\alpha \to 1^-$ in the inequalities above.
\end{proof}

\subsection*{Conditions of Theorem \ref{t.main} for the processes defined in case (c) of Section \ref{setting}.}
By construction, the processes are stationary. Moreover, the marginal $\P_{\mathcal{R}}$ satisfies   \eqref{correlation.radii.0}, \eqref{correlation.radii.2}. Therefore, it suffices to prove that the point process $\Phi$ as defined in either (c.1) or (c.2) satisfies \eqref{finite.variance} and \eqref{mixing}.

\medskip

We begin with case (c.1): For a bounded set $D \subset \Rd$, $r >0$ and a point $x_i\in \Rd$, we define $N_1(D):= \#\Phi_1(D)$ and $N_{r}^i(D)= \#\Phi_r^{x_i} \cap (B_{r_i}(x_i) \cap D)$. For $R > 0$, let
$$
B_R(D):= \{ x \in \Rd \, \colon \, \mbox{dist}( x, D) \leq R\}.
$$
Then, by \eqref{matern}, we  estimate 
\begin{align*}
\langle N(Q)^2 \rangle &=   \biggl\langle \biggl( \sum_{z_i \in \Phi_1( \Rd ) } N_{r_i}^i( B_{r_i}(z_i) \cap Q) \biggr)^2  \biggr\rangle 
\leq  \biggl \langle N_1\bigl(  B_{R_c}(Q )\bigr) \sum_{z_i \in \Phi_1(  B_{R_c}(Q) ) } \bigl( N_{r_i}^i( B_{r_i}(z_i)) \bigr)^2 \biggr\rangle \\
&=  \langle N_1\bigr( B_{R_c} (Q)\bigr)^2 \rangle \langle N^0_{R_c}(B_{R_c}(0))^2 \rangle\leq  \lambda_1^2 \| \lambda_2\|^2_{\infty} R_c^{2d} |B_{R_c}(Q) |^2.
\end{align*} 
After taking the square-root of the above inequality, we conclude \eqref{finite.variance}.

\medskip

Condition \eqref{mixing} is an easy consequence of the fact that the process under consideration has finite range of dependence $R_c$, namely that if dist$(A, B) > R_c$,
then the random variables $N(A)$ and $N(B)$ are independent. Thus, condition \eqref{mixing} is satisfied for any $\gamma> 0$ and with a constant depending on $R_c$. 

\medskip

We now turn to case (c.2). In this case, property \eqref{finite.variance} is an immediate consequence of the choice $\beta < 1$ and the fact that, if $\tilde \Phi$ is the Poisson point process on $\Rd$ with intensity $\alpha$, then for every $m\in \N$ and bounded set $B\subset \Rd$
$$
\langle N(B)^m \rangle \leq \langle ( \#\tilde\Phi(B))^m \rangle .
$$
Furthermore, as in the previous case, the process considered in (c.2) has finite range of dependence given by $r_c$ and thus satisfies \eqref{mixing} for any $\gamma > 0$.

\section*{Acknowledgements}
The authors acknowledge support through the CRC 1060 \textit{(The Mathematics of Emergent Effects)} that is funded through the German Science Foundation (DFG), and the Hausdorff Center for Mathematics (HCM) at the University of Bonn.

\bibliographystyle{amsplain}
\bibliography{giunti.RS}
\end{document}